\newcommand{\scrL }{\mathscr{L}}
\newcommand{\scrP }{\mathscr{P}}
\newcommand{\scrS }{\mathscr{S}}
\theoremstyle{plain}
\newtheorem{theorem}{Theorem}[section]
\newtheorem{corollary}[theorem]{Corollary}
\newtheorem{proposition}[theorem]{Proposition}
\newtheorem{lemma}[theorem]{Lemma}
\newtheorem{remark}[theorem]{Remark}
\numberwithin{theorem}{section}
\numberwithin{equation}{section}
\newcommand{\average}{{\mathchoice {\kern1ex\vcenter{\hrule height.4pt
width 6pt depth0pt} \kern-9.7pt} {\kern1ex\vcenter{\hrule
height.4pt width 4.3pt depth0pt} \kern-7pt} {} {} }}
\def\R{\mathbb{R}}
\def\loc{\text{loc}}
\def\div{\text{div}}
\renewcommand{\a }{\alpha }
\renewcommand{\b }{\beta }
\renewcommand{\d}{\delta }
\newcommand{\D }{\Delta }
\newcommand{\e }{\varepsilon }
\newcommand{\g }{\gamma}
\newcommand{\G }{\Gamma}
\renewcommand{\l }{\lambda }
\renewcommand{\L }{\Lambda }
\newcommand{\n }{\nabla }
\newcommand{\vp }{\varphi }
\renewcommand{\phi}{\varphi}
\newcommand{\s }{\sigma }
\renewcommand{\t }{\tau }
\newcommand{\z }{\zeta}
\renewcommand{\th }{\theta }
\renewcommand{\o }{\omega }
\newcommand{\ov}{\overline}
\newcommand{\be}{\begin{equation}}
\newcommand{\ee}{\end{equation}}
\newcommand{\de}{\partial}
\newcommand{\ti}{\widetilde}
\renewcommand{\k}{\kappa}
\newcommand{\N}{\mathbb{N}}
\newcommand{\Z}{\mathbb{Z}}
\newcommand{\cA}{{\mathcal A}}
\newcommand{\cB}{{\mathcal B}}
\newcommand{\cC}{{\mathcal C}}
\newcommand{\cD}{{\mathcal D}}
\newcommand{\cE}{{\mathcal E}}
\newcommand{\cF}{{\mathcal F}}
\newcommand{\cH}{{\mathcal H}}
\newcommand{\cJ}{{\mathcal J}}
\newcommand{\cK}{{\mathcal K}}
\newcommand{\cL}{{\mathcal L}}
\newcommand{\cM}{{\mathcal M}}
\newcommand{\cN}{{\mathcal N}}
\newcommand{\cO}{{\mathcal O}}
\newcommand{\cS}{{\mathcal S}}
\newcommand{\cU}{{\mathcal U}}
\newcommand{\cV}{{\mathcal V}}
\newcommand{\eps}{\varepsilon}
\DeclareMathOperator{\id}{id}
\renewcommand{\epsilon}{\varepsilon}
\begin{document}
\author{Xavier Cabr\'e}
\address{X. Cabr\'e\textsuperscript{1,2}:
\newline
\textsuperscript{1} Universitat Polit\`ecnica de Catalunya, Departament de Matem\`{a}tiques, Diagonal 647, 
08028 Barcelona, Spain
\newline
\textsuperscript{2} ICREA, Pg. Lluis Companys 23, 08010 Barcelona, Spain}
\email{xavier.cabre@upc.edu}

\author[Mouhamed M. Fall]
{Mouhamed Moustapha Fall}
\address{M.M. Fall: African Institute for Mathematical Sciences of Senegal, 
KM 2, Route de Joal, B.P. 14 18. Mbour, S\'en\'egal}
\email{mouhamed.m.fall@aims-senegal.org}

\author{Tobias Weth}
\address{T. Weth: Goethe-Universit\"{a}t Frankfurt, Institut f\"{u}r Mathematik.
Robert-Mayer-Str. 10 D-60054 Frankfurt, Germany}
\email{weth@math.uni-frankfurt.de}

\thanks{The first author is supported by MINECO grant MTM2014-52402-C3-1-P. 
He is part of the Catalan research group 2014 SGR 1083. The second author's work is supported by the Alexander 
von Humboldt foundation. Part of this work was done while he was visiting
the University of Frankfurt, during July and August, 2015 and 2016. The third author is supported by DAAD (Germany) within the program 57060778.}

\title[Near-Sphere lattices with constant nonlocal mean curvature]
{Near-Sphere lattices with constant nonlocal mean curvature}

\begin{abstract}
We are concerned with unbounded sets of $\mathbb{R}^N$ whose boundary has constant nonlocal (or fractional) 
mean curvature, which we call CNMC sets. This is the equation associated to critical points of the fractional 
perimeter functional under a volume constraint. We construct CNMC sets which are the countable union of a certain 
bounded domain and all its translations through a periodic integer lattice of dimension $M\leq N$.
Our CNMC sets form a $C^2$ branch emanating  from the unit ball alone and where the parameter in the branch is
essentially the distance to the closest lattice point. Thus, the new translated near-balls (or near-spheres)
appear from infinity. We find their exact asymptotic shape as the parameter tends to infinity. 
\end{abstract}

\maketitle

\section{Introduction}
\label{sec:introduction}
Let $\alpha \in (0,1)$. If $\cA$ is a smooth oriented hypersurface in $\R^N$ with unit normal vector field $\nu$, 
its nonlocal or fractional mean curvature (abbreviated NMC in the following) of order $\alpha$ at a point $x \in \cA$
is defined as  
\begin{equation}
  \label{eq:def-frac-curvature}
H(\cA; x)=\frac{2d_{N,\a}}{\a} \int_{\cA} \frac{(y-x)\cdot \nu (y)}{|y-x|^{N+\alpha}}\,dV(y)  .  
\end{equation}
Here and in the following, $dV$ stands for the volume element on $\cA$, and 
\be \label{eq:d_N-alpha}
d_{N,\a}= \frac{1-\a}{(N-1)|B^{N-1}|}= \frac{(1-\a)\Gamma(\frac{N+1}{2})}{(N-1) \pi^{(N-1)/2}},
\ee
where $B^{N-1}$ is the unit ball in $\R^{N-1}$. If $\cA$ is of class $C^{1,\beta}$ for some $\beta>\alpha$ 
and we assume $\int_{\cA} (1+|y|)^{1-N-\alpha}\,dV(y)< \infty$, then the integral in \eqref{eq:def-frac-curvature} 
is absolutely convergent in the Lebesgue sense. 

The choice of the constant $d_{N,\alpha}$ guarantees that, if $\cA$ is of class $C^{2}$, the nonlocal mean curvature 
 $H(\cA;\cdot)$  converges,  as $\a\to 1$, locally uniformly to the classical mean curvature, i.e., 
 the arithmetic mean of principle curvatures, see \cite[Lemma A.1]{Davila2014B} and \cite[Theorem 12]{Abatangelo}.

There is  an alternative expression for   $H(\cA;\cdot)$  in terms of  a solid integral.  Suppose  that $\cA=\de E$ for some open set $E\subset\R^N$ and $\nu$ is the normal  exterior to $E$. Then, for all $x\in \cA$,  we have 
\be  \label{eq:NMC-PV}
H(\cA;x)= d_{N,\alpha} PV\int_{\R^N}\frac{1_{E^c}(y)-1_{E}(y)}{|y-x|^{N+\a}}\, dy= d_{N,\alpha} \lim_{\e\to 0 }\int_{|x-y|>\e}\frac{1_{E^c}(y)-1_{E}(y)}{|y-x|^{N+\a}}\, dy,
\ee
where $ E^c=\R^N\setminus \ov{E}$ and  $1_D$ denotes the characteristic function of a set $D \subset \R^N$. This can be derived using the divergence theorem and the fact that  $\nabla_y\cdot (y-x){|y-x|^{-N-\alpha}} =  {\alpha}{|y-x|^{-N-\alpha}}.$
 
In recent independent works, a result of Alexandrov type has been proved for the nonlocal mean curvature by 
Ciraolo, Figalli, Maggi, and Novaga~\cite[Theorem 1.1]{Ciraolo2015} and Cabr\'e, Fall, Sol\`a-Morales, 
and Weth~\cite[Theorem 1.1]{Cabre-Alex-Del-2015A}. This result states that every bounded (and a priori not necessarily 
connected) hypersurface without boundary and with constant nonlocal mean curvature must be a sphere. 
This result naturally led to questions related to the existence and shape of unbounded hypersurfaces of constant NMC. 
Obvious examples within this class are hyperplanes (which have zero NMC) and straight cylinders. 
In \cite{Cabre-Alex-Del-2015A} and \cite{CFW-2016} we proved the existence of periodic 
and connected hypersurfaces  
in $\mathbb{R}^N$ with constant NMC which bifurcate from a straight cylinder. 
These hypersurfaces should be regarded as Delaunay type cylinders in the nonlocal setting. 
We point out that, unlike in the local case, straight cylinders have positive constant NMC in every space 
dimension $N\geq 2$. Thus, our result also gave  periodic bands
in $\mathbb{R}^2$ with constant NMC and which bifurcate from a straight band. As proved in \cite{cinti-davila-del-pino}, a further unbounded hypersurface of constant NMC is the helicoid, 
which -- besides having zero mean curvature -- also has zero NMC.

Having constant nonlocal mean curvature is the equation associated to critical points of the fractional 
perimeter functional under a volume constraint. Thus, one would expect that CNMC sets can be constructed variationally. In this 
direction, the paper \cite{Davila2015} by D\'avila, del Pino, Dipierro, and Valdinoci,
established variationally the existence of periodic and cylindrically symmetric 
hypersurfaces in $\R^N$ which minimize (under the volume constraint) a certain 
fractional perimeter functional adapted to periodic sets. More precisely, \cite{Davila2015} 
established the existence of a 1-periodic minimizer for every given volume 
within the slab $\{(s,\z)\in\R\times\R^{N-1}\, : \, -1/2<s<1/2\}$. We have realized recently 
that, in fact, their fractional perimeter functional adapted to periodic sets 
gives rise to CNMC hypersurfaces in a weak sense. They would be CNMC 
hypersurfaces in the classical sense defined above if one could prove
that they are of class $C^{1,\beta}$ for some $\beta>\alpha/2$
-- which is not done in \cite{Davila2015}. The article also proves that for small volume constraints, the minimizers
tend in measure (more precisely, in the so called Fraenkel asymmetry) to a
periodic array of balls.

Note that sets obtained by minimizing a fractional perimeter functional under 
a volume constraint are expected to have Morse index 1 -- within a proper 
functional analytic framework. This will be the case for the CNMC sets 
constructed in the present paper -- see Remark~\ref{sec:introduction-1-remark}(iv). As we will see, the linearized operator at 
them (acting on a space of even functions) will have only one negative 
eigenvalue -- all the rest being positive. Note that looking at the linearized 
operator in a space of even functions excludes the eigenfunction with zero 
eigenvalue produced by the invariance of the nonlinear problem under translations.

We recall that in the case of classical mean curvature, embedded Delaunay hypersurfaces 
vary from a cylinder to an infinite compound of tangent spheres. However, it is easy to see that 
an infinite compound of aligned round spheres, tangent or disconnected, does not have constant NMC.
Indeed, it is an open problem to establish the existence of global continuous 
branches of nonlocal Delaunay cylinders and to study their limiting configurations.

In the present paper, we study nonlocal analogues of the set given by an infinite compound of aligned round spheres, 
tangent or disconnected. In a more general setting, we construct CNMC sets which are the countable union of a certain 
bounded domain and all its translations through a periodic integer lattice of dimension $M\leq N$.
Our CNMC sets form a $C^2$ branch emanating from the unit ball alone, where the parameter in the branch is
essentially the distance to the closest lattice point. Thus, the new translated near-balls (or near-spheres)
appear from infinity. We point out that it is necessary
to consider infinite lattices in this problem --  a finite disjoint union of two or more bounded sets cannot have constant NMC by the Alexandrov type rigidity result in \cite{Cabre-Alex-Del-2015A,Ciraolo2015}. 

We expect (but we do not prove) that, when the distance from two consecutive near-spheres is large 
enough, our periodic CNMC set made of near-spheres is
a minimizer of the fractional perimeter under the volume and periodicity constraints. Note that, after rescaling,
large distance from two consecutive near-spheres turns into a fixed distance (or period) but now with  a very small 
volume constraint -- as in the result of \cite{Davila2015} mentioned above.

To be precise, we now assume $N\geq 2$ and let 
$$
S:= S^{N-1}\subset \R^N
$$  
denote the unit sphere of $\R^N$.  For $M\in\N$ with $1\leq M\leq N$ we regard $\R^M$ as a subspace of $\R^N$ by 
identifying $x'\in \R^M$ with $(x',0)\in \R^M \times \R^{N-M} =  \R^N$.  Let 
$\left\{ \textbf{a}_1;\dots;\textbf{a}_M\right\}$  be a basis of~$\R^M$.   
 By the above identification, we then consider the $M$-dimensional lattice 
\be \label{eq:def-scrL-lattice}
\scrL=\left\{\sum_{i=1}^M k_i \textbf{a}_i\,:\,  k=(k_1,\dots,k_M)\in \Z^M\right\}
\ee 
as a subset of $\R^N$. In the case where     $\left\{ \textbf{a}_1;\dots;\textbf{a}_M\right\}$   is an   orthogonal or an orthonormal basis, we say that $\scrL$ is a  {rectangular lattice} or a square lattice, respectively. 

 We define, for $r>0, $  
\be
  S+ r\scrL:= \bigcup_{p \in\scrL} \Bigl( S+  r p \Bigr)  \; \subset \; \R^N.
\ee
Then, for $r> 2 (\inf_{p \in \scrL \setminus \{0\}} |p|)^{-1}$, the set $S+ r\scrL$ is the  union of disjoint unit spheres  
centered at the  lattice points in $r \scrL$. Consequently,  $S+ r\scrL$  is a set of constant classical mean curvature 
(equal to one).  In contrast, as a consequence of our main result, we shall see that
the NMC $H(S+ r\scrL;\cdot )$ is in general \textit{not} constant on this periodic set.  It is therefore natural to ask if 
the sphere $S$ can be perturbed smoothly to a set $S_\vp$,  such that   $S_\vp+ r\scrL$, for $r>0$ large enough,  
has constant NMC.

To answer this question, we fix $\b\in (\a,1)$ and define  the set 
$$
\cO:=\{\vp \in  C^{1,\b}(S) \,:\, \|\vp\|_{L^\infty(S)}<1 \}. 
$$
We then consider  the deformed sphere 
\begin{equation}\label{eq:first-phi}
S_\phi:= \{ (1+ \phi(\s)) \s \::\: \s \in S\}, \qquad \vp \in \cO.
\end{equation}
Provided that $r>0$ is large enough, the deformed sphere lattice (or near-sphere lattice)
$$
S_\vp+ r\scrL := \bigcup_{p \in \scrL} \Bigl(S_\vp+  rp \Bigr)
$$
is a noncompact hypersurface of class $C^{1,\beta}$, which by construction is periodic with respect to $r \scrL$-translations.  
 
The main result of the present paper is the following.  

\begin{theorem}\label{th:main-th-Brav-lat}
Let $\a\in (0,1)$, $\b\in \left(\a,1\right)$,  $N\geq 2$, $1\leq M\leq N$ and $\scrL$ be an $M$-dimensional lattice as 
given in \eqref{eq:def-scrL-lattice}. Then,
there exist $r_0>0$,  and a  $C^2$-curve $(r_0,+\infty) \to C^{1,\b}(S)$, $r \mapsto \phi_r$, with the following properties: 
\begin{itemize}
\item[(i)] $\phi_r \to 0$ in $C^{1,\beta}(S)$ as $r \to +\infty$;
\item[(ii)] For every $r\in (r_0,+\infty)$, the function $\vp_r:S\to \R$ is even (with respect to reflection through 
the origin of $\R^N$); 
\item[(iii)] For every $r\in (r_0,+\infty)$,  the hypersurface $S_{\vp_r}+ r\scrL $ has constant nonlocal mean curvature given by $H(S_{\vp_r}+ r\scrL;\cdot  ) \equiv   H(S;\cdot)$.
\item[(iv)] Letting $\scrL_*:= \scrL \setminus \{0\}$,  the function $\phi_r$ expands as
\begin{align*}
\phi_r(\th) 
&=r^{-N-\alpha}\left(-\kappa_0  +   r^{-2}\left\{\kappa_1 \sum_{ p\in \scrL_*} \frac{(\th \cdot p)^2}{|p|^{N+\alpha+4}} -\kappa_2\right\}
 +  o\left(r^{-2}\right) \right) \quad \text{for $\theta \in S$ as $r \to +\infty,$}
\end{align*}
with positive constants $\kappa_0$, $  \k_1$ and $\k_2$ (see Remark~\ref{sec:introduction-1-remark} below for their 
explicit values) and with $r^2 o(r^{-2}) \to 0$ in $C^{1,\beta}(S)$ as $r \to +\infty$.
\item[(v)] If $1\leq M\leq N-1$,  then the functions $\phi_r$, $r > r_0$, are non-constant on $S$.
\end{itemize}

Moreover, if $r_1>r_0$ and $(r_1,+\infty) \to C^{1,\beta}(S)$, $r \mapsto \tilde \phi_r$ is another (not necessarily continuous) curve satisfying $(i)$, $(ii)$ and $(iii)$, then $\tilde \phi_r = \phi_r$ for $r$ sufficiently large.  
\end{theorem}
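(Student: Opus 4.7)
The plan is to apply the implicit function theorem with the perturbation parameter $s := 1/r \in [0,s_0)$, treating the single-sphere CNMC equation as the limit case $s = 0$. I would introduce the Banach space $X_{\rm ev}$ of even functions in $C^{1,\b}(S)$, and for $\vp \in \cO \cap X_{\rm ev}$ and $s > 0$ define
\[
F(s,\vp)(\th) := H(S_\vp + r \scrL;\,(1+\vp(\th))\th) - H(S;\cdot), \qquad \th \in S, \qquad r = 1/s.
\]
Using the solid-integral representation \eqref{eq:NMC-PV}, and the fact that for $r$ large the bodies $E_{\vp,p}$ enclosed by $S_\vp + rp$ are pairwise disjoint, one splits off the singular diagonal term and gets
\[
F(s,\vp)(\th) = \bigl[H(S_\vp;(1+\vp(\th))\th) - H(S;\cdot)\bigr] - 2 d_{N,\a} \sum_{p \in \scrL_*} \int_{E_{\vp,p}} \frac{dy}{|y - (1+\vp(\th))\th|^{N+\a}}.
\]
The lattice sum converges absolutely since $\sum_{p \in \scrL_*} |p|^{-N-\a} < \infty$ (as $N+\a > M$), and the evenness of $\vp$ combined with $\scrL = -\scrL$ ensures that $F$ takes values in $C^{0,\b}_{\rm ev}(S)$. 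A rescaling argument (pulling the factor $r^{-N-\a}$ out of each interaction integral) shows that $F$ extends to a $C^2$ map from a neighborhood of $(0,0)$ in $[0,s_0)\times(\cO \cap X_{\rm ev})$ into $C^{0,\b}_{\rm ev}(S)$, with $F(0,\vp)=H(S_\vp;\cdot)-H(S;\cdot)$ and, in particular, $F(0,0)=0$.

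Next, I would identify $D_\vp F(0,0)$ with the Jacobi-type operator $\cL_0$ obtained by linearising the NMC of a single star-shaped perturbation of $S$ at $\vp=0$. Its spectral decomposition in spherical harmonics is classical (see e.g.\ the proof of the Alexandrov-type result in \cite{Cabre-Alex-Del-2015A}): $\cL_0$ has eigenvalue $\mu_k$ on the $k$-th spherical harmonic subspace, with $\mu_1=0$ (reflecting translation invariance of the single-sphere problem, whose kernel $\mathrm{span}\{\th \mapsto \th_i\}$ consists of odd functions) and $\mu_k \neq 0$ for every $k \neq 1$. Consequently $\cL_0 \colon X_{\rm ev} \to C^{0,\b}_{\rm ev}(S)$ is an isomorphism, and the implicit function theorem produces a unique $C^2$-curve $s \mapsto \vp_s \in X_{\rm ev}$ of solutions to $F(s,\vp_s)\equiv 0$ near $s=0$. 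Setting $\vp_r := \vp_{1/r}$ yields properties (i), (ii), (iii), and the local uniqueness assertion.

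For the expansion (iv) I would Taylor-expand each interaction integrand: writing $w := (y - rp) - (1+\vp(\th))\th$ with $y \in E_{\vp,p}$ (so $|w|$ stays bounded),
\[
\frac{1}{|y - (1+\vp(\th))\th|^{N+\a}} = \frac{1}{(r|p|)^{N+\a}}\left(1 - \frac{(N+\a)\,w\cdot p}{r|p|^2} + \frac{1}{r^2}\,Q_p(w,\th) + O(r^{-3})\right),
\]
where $Q_p$ is the quadratic form produced by the next order of the binomial expansion. Summing over $p$ and using $\scrL=-\scrL$, the first-order term is odd in $p$ and cancels, leaving a $\th$-independent leading source of order $r^{-N-\a}$ (which produces $-\k_0$), and a second-order source of order $r^{-N-\a-2}$ whose only $\th$-dependent part is a positive multiple of $\sum_{p \in \scrL_*} (\th\cdot p)^2/|p|^{N+\a+4}$. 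Inverting $\cL_0$ on this source (acting termwise on its spherical-harmonic components of degrees $0$ and $2$) yields the expansion in (iv) with the precise constants $\k_0$, $\k_1$, $\k_2$. Property (v) is then immediate: if $1 \le M \le N-1$, the lattice $\scrL$ lies in the proper subspace $\R^M \subsetneq \R^N$, so the function $\th \mapsto \sum_{p \in \scrL_*} (\th\cdot p)^2 / |p|^{N+\a+4}$ vanishes for $\th \perp \R^M$ but is strictly positive on $S \cap \R^M$; combined with the expansion, this forces $\vp_r$ to be non-constant once $r_0$ is enlarged if necessary.

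The hard part is the verification that the lattice sum extends as a $C^2$-map in $(s,\vp)$ across $s=0$. This requires uniform estimates in $\th$ on the Newtonian-type potentials $\int_{E_{\vp,p}} |y-x|^{-N-\a}\,dy$ and their Fr\'echet derivatives in $\vp$, each dominated by a constant times $|rp|^{-N-\a}$, so that term-by-term differentiation is justified by the convergent majorant $\sum_p |p|^{-N-\a}$. Once this regularity is in place, the implicit function theorem argument and the Taylor expansion become a matter of careful bookkeeping.
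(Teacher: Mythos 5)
Your overall strategy is the same as the paper's: pass to the parameter $s=1/r$, split the NMC into the single-sphere term plus a lattice sum of nonsingular solid integrals carrying the factor $r^{-N-\alpha}$, restrict to even functions to kill the translation kernel, apply the implicit function theorem, and obtain (iv) by expanding the interaction integrals to second order and inverting the linearization on spherical harmonics of degree $0$ and $2$; parts (v) and the uniqueness claim are handled as in the paper. However, there is a genuine gap in where you locate the difficulty. You declare that ``the hard part'' is the $C^2$-dependence of the lattice sum on $(s,\varphi)$ — but those integrands are smooth and uniformly dominated by $|p|^{-N-\alpha}$, so this is the \emph{easy} part (Proposition~\ref{sec:nmc-lattice-prop-Gk}). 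What you assume without any argument is the Fr\'echet differentiability (even $C^1$, let alone $C^2$) of the diagonal term $\varphi \mapsto H(S_\varphi; F_\varphi(\cdot))$ as a map between H\"older spaces: this involves the genuinely singular kernel $|y-x|^{-N-\alpha}$ on the deformed sphere, and establishing its smoothness $\cO \to C^{\beta-\alpha}(S)$ is the main technical content of the paper (Theorem~\ref{sec:nmc-lattice-prop-h-0} and all of Section~\ref{s:reg-NMC}). Related to this, your claimed target space $C^{0,\beta}_{\mathrm{ev}}(S)$ is not correct for the diagonal term: for $\varphi \in C^{1,\beta}(S)$ the NMC is only of class $C^{\beta-\alpha}$, and the choice of target space matters for the rest of the argument.

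A second, smaller but still real, gap is the isomorphism property of the linearization. The spectral decomposition of $L_\alpha$ in spherical harmonics only yields invertibility in $L^2$-based Sobolev spaces; to apply the implicit function theorem in your H\"older setting you need $D_\varphi F(0,0) = \alpha(L_\alpha - \lambda_1)$ to be an isomorphism from even $C^{1,\beta}(S)$ onto even $C^{\beta-\alpha}(S)$, which in the paper requires an additional Schauder-type estimate for $L_\alpha + \mathrm{id}$ (obtained by reducing to the fractional Laplacian in $\R^N$, Lemma~\ref{sec:line-nmc-oper-1}), an interpolation inequality, and a Fredholm argument (Section~\ref{sec:line-nmc-oper}); asserting it ``consequently'' from the eigenvalues alone skips this step. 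Finally, for (iv) note that the expansion is of order $o(r^{-N-\alpha-2})$ while $\tau \mapsto |\tau|^{N+\alpha}$ is only finitely differentiable; the justification that the remainder is genuinely $o(r^{-2})$ in $C^{1,\beta}$ after factoring out $r^{-N-\alpha}$ uses the $C^2$-regularity and evenness in $\tau$ of the composed map $\tau \mapsto G(\tau,\phi(\tau))$, which your ``careful bookkeeping'' would have to reproduce.
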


The curve $r\mapsto\varphi_r$ is not $C^3$, in general. It is not $C^3$ for instance when $N=2$. This is due to 
the presence of the factor $|\tau|^{N+\alpha}=r^{-N-\a}$ in our functional equation \eqref{eq:first-funct-eq}.

To establish the theorem it will be essential to analyze the linearized operator for the NMC $H$ at the unit sphere $S$. 
We will see that the linearization is given by the operator
\begin{equation}\label{eq:linNMC}
\phi \mapsto 2d_{N,\alpha}(L_\alpha \phi-\lambda_1\phi),
\end{equation}
where
\begin{equation}
  \label{eq:def-L-alpha}
L_\a : C^{1,\beta}(S) \to C^{\beta-\alpha}(S), \qquad  L_\a\vp(\th )= 
PV \int_S\frac{\vp(\th)-\vp(\s)}{|\th -\s|^{N+\a}}\, dV(\s),
\end{equation}
$\lambda_1$ is defined next in \eqref{eq:l_k-0}, and $\varphi$ is a deformation of $S$ in the direction of 
its normal -- as in \eqref{eq:first-phi}.
The operator $L_\a$ can be seen as a spherical fractional  Laplacian, and the above integral is understood in the principle value sense, i.e., 
 $$
PV \int_S\frac{\vp(\th)-\vp(\s)}{|\th -\s|^{N+\a}}\, dV(\s):= \lim_{\eps \to 0} \int_{S \setminus B_{\eps}(\th)} \frac{\vp(\th)-\vp(\s)}{|\th -\s|^{N+\a}}\, dV(\s) \qquad \text{for $\eps \to 0$.}
$$
The operator  $L_\a$ has the spherical harmonics   as eigenfunctions corresponding to the increasing sequence $\l_0=0<\l_1<\l_2<...$ of eigenvalues   given by
\be \label{eq:l_k-0}
\l_k= \frac{\pi^{(N-1)/2}    \Gamma((1-\a)/2)    }{  (1+\a)2^\a       \Gamma((N+\a)/2) } \left(  \frac{ \Gamma\left(  \frac{2k+N+\a}{2} \right)  }{ \Gamma\left(  \frac{2k+N-\a-2}{2} \right) } -   \frac{\Gamma\left(  \frac{N+\a}{2} \right)  }{ \Gamma\left(  \frac{N-\a-2}{2} \right) }   \right),
\ee 
see \cite[Lemma 6.26]{Samko2002} and Section \ref{sec:line-nmc-oper} below. Here, as before,  $\Gamma$ is the Gamma 
function.  We shall also see, as a consequence of \eqref{eq:eq-to-solve} and \eqref{eq:def-rem-l1-0},
that the NMC of the unit sphere 
$S= S^{N-1} \subset \R^N$ is given by  
\begin{equation}\label{eq:NMC-sphere}
H(S;\cdot) \equiv \frac{2d_{N,\alpha}}{\a} \l_1 \qquad \text{on $S$.}
\end{equation}

Now that $\l_1$ and $\l_2$ have been introduced, we can give the value of the constants in  
Theorem~\ref{th:main-th-Brav-lat}(iv). In the following remark, we also comment on the size of the near-spheres
depending on the parameter $r$, as well as on their smoothness.

\begin{remark}{\rm 
\label{sec:introduction-1-remark}
(i) The constants in  Theorem~\ref{th:main-th-Brav-lat}(iv) are given by 
\begin{align*}
\kappa_0 &=  \frac{ |S|}{N \l_1 }\sum_{p\in\scrL_*}\frac{1}{|p| ^{N+\a}}, \qquad  
\kappa_1= \frac{|S|(N+\alpha ) (N+\alpha+2 )}{6 N (\l_2-\l_1)}  \qquad \text{and}\\
\kappa_2&=\frac{ |S|}{6} \left\{\frac{(N+\alpha)(N+\alpha+2) }{N^2(\lambda_2-\lambda_1)}  +  
\frac{2(N+\a)(N+1)(\alpha +2)}{N^2(N+2) \l_1} 
\right\} \sum_{p \in \scrL_*}\frac{1}{|p |^{N+\alpha+2}},
\end{align*}
where $\lambda_1,\lambda_2$ are given in \eqref{eq:l_k-0}.  
 
(ii) Since $\kappa_0>0$, the expansion in Theorem \ref{th:main-th-Brav-lat}(iv) shows
that, for large $r$, the perturbed spheres $S_{\vp_{r}}$ become smaller than $S$ 
as the perturbation parameter $r$ decreases.  
With regard to the order $r^{-N-\alpha}$, the shrinking process is uniform on $S$, whereas non-uniform deformations of the spheres may appear at the order $r^{-N-\alpha-2}$. In particular, we shall detect these non-uniform deformations in the case $M \le N-1$, and 
from this we will then deduce part (v) of Theorem~\ref{th:main-th-Brav-lat}. In the case $M=N$, it remains an open problem to characterize the lattices which give rise to non-uniform deformations. We conjecture that $H(S+r \scrL;\cdot)$ is non-constant for any $N$-dimensional lattice $\scrL$ and large $r$.
 
(iii) The smoothness (i.e., the $C^\infty$-character) of our $C^{1,\beta}$ hypersurfaces $S_{\vp_r}+ r\scrL$ , and in general of 
$C^{1,\beta}$ hypersurfaces in $\R^N$ with constant NMC which are, locally, Lipschitz graphs, follows (since $\beta >\alpha$)
from the methods and results of Barrios, Figalli, and Valdinoci~\cite{Barrios} on nonlocal minimal graphs. 
This holds for all $N\geq 2$.
More generally, to deduce the $C^\infty$ regularity, \cite{Barrios} needs to assume that the hypersurface is 
$C^{1,\beta}$ for some $\beta>\alpha/2$  and that has constant 
nonlocal mean curvature in the viscosity sense; this fact can be found in Section~3.3
of \cite{Barrios}. Here, the notion of viscosity solution is needed since the expression 
\eqref{eq:def-frac-curvature} for the NMC is only well defined
for $C^{1,\beta}$ sets when $\beta > \alpha$.

(iv) As already remarked above, the CNMC sets constructed in Theorem~\ref{th:main-th-Brav-lat} have Morse index 1 
within our functional analytic framework. More precisely, for $r> r_0$ sufficiently large, the linearization of the 
nonlocal mean curvature operator
$$
\cO \to C^{\beta-\alpha}(S),\qquad  \varphi \:\mapsto\: \Bigl[ \sigma \:\mapsto\: H(S_\varphi + r \scrL; 
(1+ \varphi(\sigma))\sigma)\Bigr]
$$
at $\varphi_r$ has exactly one negative eigenvalue when restricted to even functions in $C^{1,\beta}(S)$, 
whereas all other eigenvalues are positive. This property follows from the fact 
that the linearization at $\varphi_r$ converges to the operator $2d_{N,\alpha}(L_\alpha -\lambda_1)$,
given by (\ref{eq:linNMC}-(\ref{eq:def-L-alpha}), 
as $r \to \infty$. This convergence is a mere consequence of the $C^2$-smoothness of the operator 
$\tilde H$ defined in (\ref{def-tilde-H-intro}) below, and the fact that $2d_{N,\alpha}(L_\alpha -\lambda_1)$
is the linearization at the unit sphere $S=\lim_{r\to\infty}S_{\varphi_r}$ by Lemma~\ref{lem:lin-NMC-op}.
Finally, one uses the spectral decomposition of $L_\alpha -\lambda_1$, already mentioned previously, and sees that,
among even functions, its eigenvalues are given by $-\lambda_1 < \lambda_2-\lambda_1 < \lambda_4-\lambda_1 < \cdots$.
The first one is negative and all others are positive.
} 
\end{remark}

As a corollary of Theorem~\ref{th:main-th-Brav-lat}, we obtain the following more explicit form of $\vp_r$ in
the case of  rectangular lattices. 

\begin{corollary} \label{cor:main-th}
Assume that  $\scrL$ is a rectangular lattice of dimension $ M\in \left\{1,\dots, N\right\}$.  Then the function $\vp_r$ in  Theorem \ref{th:main-th-Brav-lat}  expands as 
\be \label{eq:expan-orthog-lattice}
\phi_r(\th )= r^{-N-\alpha}\left(-\kappa_0  +   r^{-2}\left\{\kappa_1  \sum_{j=1}^M  \mu_j \th_j^2 -\kappa_2 \right\}
 +  o(r^{-2}) \right) \quad \text{for $\th \in S$ as $r \to +\infty$},
\ee
where $\mu_j=\sum \limits_{{p \in \scrL_*}} \frac{p_j^2}{|p|^{N+\alpha+4}}$. If, in particular,  $\scrL$ is a square  lattice then
\be \label{eq:expan-square-lattice}
\phi_r(\th )= r^{-N-\alpha}\left(-\kappa_0  +   r^{-2}\left\{ \tilde \kappa_1  \sum_{j=1}^M    \th_j^2 -\kappa_2 \right\}
 +  o(r^{-2}) \right) \quad \text{for $\th \in S$ as $r \to +\infty$},
\ee
where $\tilde \kappa_1=\frac{\k_1}{M} \sum \limits_{p \in \scrL_*}\frac{1}{|p |^{N+\alpha+2}}$.
\end{corollary}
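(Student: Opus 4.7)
The plan is to reduce the corollary to a symmetry-based computation starting from part (iv) of Theorem~\ref{th:main-th-Brav-lat}. Recall that $\scrL \subset \R^M \subset \R^N$, and the assertion of the corollary is local in the ``coefficient'' of $r^{-N-\alpha-2}$ in the expansion of $\varphi_r$. It therefore suffices to analyze the quantity
\[
Q(\th) := \sum_{p \in \scrL_*} \frac{(\th \cdot p)^2}{|p|^{N+\alpha+4}}, \qquad \th \in S,
\]
and show that it reduces, respectively, to $\sum_{j=1}^M \mu_j \th_j^2$ in the rectangular case and to $\bigl(\frac{1}{M}\sum_{p \in \scrL_*}|p|^{-N-\alpha-2}\bigr) \sum_{j=1}^M \th_j^2$ in the square case. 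Absolute convergence of all the sums is fine because $\sum_{p \in \scrL_*} |p|^{-N-\alpha-2}$ converges (the exponent $N+\alpha+2$ exceeds $M$, since $N\geq M$ and $\alpha>0$), and $(\th\cdot p)^2 \leq |p|^2$.

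First I would treat the rectangular case. After applying a rotation of $\R^M$ (which acts as an automorphism of $\R^N$ preserving $\R^M \hookrightarrow \R^N$, the sphere $S$, and the functional equation), one can assume $\textbf{a}_i = |\textbf{a}_i|e_i$ for $i=1,\dots,M$, where $e_1,\dots,e_N$ is the standard basis. Then each $p \in \scrL$ has coordinates $p_j = k_j |\textbf{a}_j|$ for $1 \leq j \leq M$ and $p_j = 0$ for $j > M$. Expanding
\[
(\th \cdot p)^2 = \sum_{j,k=1}^M \th_j \th_k p_j p_k
\]
and interchanging summation with the (absolutely convergent) lattice sum, the cross terms $j \neq k$ vanish: the map $p \mapsto p'$ that sends $k_j \to -k_j$ while fixing the other coordinates is an involution of $\scrL_*$ which preserves $|p|$ but reverses $p_j p_k$ for any $k \neq j$. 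Thus
\[
Q(\th) = \sum_{j=1}^M \th_j^2 \sum_{p \in \scrL_*} \frac{p_j^2}{|p|^{N+\alpha+4}} = \sum_{j=1}^M \mu_j \th_j^2,
\]
and substituting into Theorem~\ref{th:main-th-Brav-lat}(iv) yields \eqref{eq:expan-orthog-lattice}.

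For the square lattice I would additionally exploit the coordinate-permutation symmetry: because $\textbf{a}_i = e_i$ for $1 \leq i \leq M$, every transposition of two of the first $M$ coordinates is an isometric automorphism of $\scrL$. Consequently the numbers $\mu_1,\dots,\mu_M$ defined above all coincide with a common value $\mu$. Using $\sum_{j=1}^M p_j^2 = |p|^2$ for $p \in \scrL$ (which lies in the span of $e_1,\dots,e_M$), summing $\mu_j$ over $j$ gives
\[
M\mu = \sum_{p\in \scrL_*} \frac{|p|^2}{|p|^{N+\alpha+4}} = \sum_{p\in \scrL_*} \frac{1}{|p|^{N+\alpha+2}},
\]
hence $\mu = \tilde\kappa_1/\kappa_1$ with $\tilde\kappa_1$ as in the statement. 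Inserting this back into \eqref{eq:expan-orthog-lattice} yields \eqref{eq:expan-square-lattice}.

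There is no real obstacle here: the only substantive input is Theorem~\ref{th:main-th-Brav-lat}(iv), and the argument is a direct symmetry/cancellation computation. The only point that requires a brief justification is that the initial rotation in $\R^M$ is harmless, which follows because the whole construction of Theorem~\ref{th:main-th-Brav-lat} is equivariant under orthogonal transformations preserving $\R^M \subset \R^N$ (both the nonlocal mean curvature operator and the class $\cO$ transform covariantly), so replacing $\scrL$ by its image under such a rotation rotates $\varphi_r$ accordingly and leaves \eqref{eq:expan-orthog-lattice} and \eqref{eq:expan-square-lattice} invariant.
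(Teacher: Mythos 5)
Your proposal is correct and follows essentially the same route as the paper's own proof: reduce to the quadratic form $\sum_{p\in\scrL_*}(\th\cdot p)^2|p|^{-N-\alpha-4}$, align the basis with the coordinate axes up to a rotation, kill the cross terms by the reflection symmetry $k_j\mapsto -k_j$, and in the square case use that the $\mu_j$ coincide and sum to $\sum_{p\in\scrL_*}|p|^{-N-\alpha-2}$. The only (harmless) difference is that you spell out the equivariance argument justifying the initial rotation, which the paper leaves implicit.
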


As observed in Theorem \ref{th:main-th-Brav-lat}, for $M\leq N-1 $ the perturbation $\vp_r$ is nonconstant on $S$, i.e., the NMC of $H(S+r\scrL;\cdot)$ is nonconstant for $r$ large enough. On the other hand, if $\scrL$ is a square lattice of  dimension $N$, then by \eqref{eq:expan-square-lattice} we have 
$$
 \phi_r(\th )= r^{-N-\alpha}\Bigl(-\kappa_0  +   r^{-2}\Bigl(\tilde  \kappa_1  -\kappa_2 \Bigr)
 +  o(r^{-2}) \Bigr) \qquad \text{as $r\to \infty$,}
$$
hence the deformation of the lattice $S_{\vp_r}+r\scrL$ is uniform up to the order $r^{N-\alpha-2}$.  

In order to explain the idea of the proof of Theorem~\ref{th:main-th-Brav-lat}, it is convenient first to 
pay some attention to the linearized operator at $S\subset\R^N$ for the classical mean curvature ($\a=1$).
Since $S$ is a CMC surface, it is well known (see for instance Section~6 of \cite{FFMMM}) that the linearization of 
the mean curvature operator (recall that we take as mean curvature the arithmetic mean of the principal
curvatures) agrees with $(N-1)^{-1}$ times the second variation of perimeter, and thus is given by the 
Jacobi operator 
\begin{equation}
J \phi := (N-1)^{-1} \{-\Delta_S\phi - c^2\phi\}= (N-1)^{-1}\{-\Delta_S \phi- (N-1)\phi\} \qquad\text{ on } S,
\end{equation}
where $\Delta_S$ is the Laplace-Beltrami operator on $S$ and
$c^2=N-1$ is the sum of the squares of the principal curvatures of $S$. Here $\phi$ is a normal deformation as in 
\eqref{eq:first-phi}. Recall that $\Delta_S$ has the spherical 
harmonics as eigenfunctions, corresponding to the increasing sequence $k(k+N-2)$ of eigenvalues, with $k\geq 0$.  
Thus, $J$ has the same eigenfunctions but with eigenvalues 
\begin{equation}\label{eq:eigen-1}
\mu_k-\mu_1:=(N-1)^{-1}\{k(k+N-2)-(N-1)\}.
\end{equation}
Thus,
the first eigenvalue is negative and corresponds to constant functions on $S$ (that is, to the perturbation corresponding
to changing the radius of the sphere $S$). The third and next eigenvalues are all positive. But the second one ($k=1$)
is zero and has $\theta_i=x_i/|x|$ for $i=1,\ldots,N$ (the spherical harmonics of degree one) as eigenfunctions.
It is simple to see that this zero eigenvalue corresponds to translations of $S$ in $\R^N$, which do not change the mean curvature and thus
provide a zero eigenvalue.

As mentioned above, the linearized operator at $S\subset\R^N$ for the NMC $H$ is given by 
\eqref{eq:linNMC}-\eqref{eq:def-L-alpha}. It coincides, thus, with the second variation at $S$ of fractional perimeter. This nice formula is not immediate at all. We will derive it in
Section~5 in the Fr\'echet sense of linearization, after proving the smoothness of the NMC operator in Section~4. In a restricted sense related to the existence of directional derivatives, this formula for
the linearization also follows from results of D\'avila, del Pino, and Wei \cite[Appendix B]{Davila2014B} 
and of Figalli, Fusco, Maggi, Millot, and Morini \cite[Section 6]{FFMMM}.
These interesting papers found -  at any hypersurface $\cA$ - a simple
expression for the linearization of NMC with respect to any given normal
boundary variation. Note here that the NMC as defined 
in these two papers agrees with our $H/d_{N,\a}$; see \eqref{eq:NMC-PV}.

Note that the linearization of NMC at $S$, \eqref{eq:linNMC}-\eqref{eq:def-L-alpha}, has also the 
spherical harmonics as eigenfunctions -- as mentioned above. In addition, its second eigenvalue 
$2d_{N,\a}(\l_k-\l_1)$ (which corresponds to $k=1$) 
vanishes -- as in the local case. We will see below that, to apply the implicit function theorem, we
must get rid of this zero eigenvalue. For this, we will work only with perturbations of the sphere $S$ which
are even with respect to the origin of $\R^N$.

Just for consistency, we can now check that the eigenvalues of our nonlocal linearized
operator \eqref{eq:linNMC}  satisfy
$$
2d_{N,\a}(\lambda_k -\lambda_1) \to \mu_k-\mu_1=(N-1)^{-1}\{k(k+N-2) -(N-1)\} \qquad\text{ as } \a\to 1.
$$
Indeed, using  the fact that $ \G(z)=(z-1)\G(z-1)$, we get 
$$
 \frac{  \Gamma\left(  \frac{2k+N+1}{2} \right)  }{ \Gamma\left(  \frac{2k+N-3}{2} \right) } -   
 \frac{\Gamma\left(  \frac{N+1}{2} \right)  }{ \Gamma\left(  \frac{N-3}{2} \right) } =   k(k+N-2)
$$
and $((1-\a)/2)\Gamma((1-\a)/2)=\G((3-\a)/2)$.  
From these identities and recalling \eqref{eq:d_N-alpha} and \eqref{eq:l_k-0}, we deduce that
\begin{align*}
   \lim_{\a\to 1}2d_{N,\a}\l_k &=\lim_{\a\to 1} \frac{2(1-\alpha)\Gamma((N+1)/2)}{(N-1)\pi^{(N-1)/2}} 
   \frac{ \pi^{(N-1)/2}\, 2}{(1-\a)(1+\a)2^\a \G((N+1)/2)}  k(k+N-2)\\
   &= (N-1)^{-1} k(k+N-2)
 \end{align*}
  for $k\in \N$.

We can now outline the idea of the proof of Theorem~\ref{th:main-th-Brav-lat}, which is based on the implicit function 
theorem.
Let $c>0$ be sufficiently small such that the translates $S_\vp+r p $, $p \in \scrL$ do not intersect 
each other for $r>1/c$ 
and $\vp\in \cO=\{\vp \in  C^{1,\b}(S) \,:\, \|\vp\|_{L^\infty(S)}<1 \}$. 
We then rewrite the problem in the variable $\t =1/r$ and show that the nonlinear operator
$$
\tilde{H}: (-c ,c ) \times \cO   \to C^{\b-\a}(S)
$$
given by 
\begin{equation}
\label{def-tilde-H-intro}
\tilde{H}(\tau,\phi)(\th):= 
\begin{cases}
 H\left(S_\vp+ \frac{1}{\tau}\scrL ; (1+\vp(\th))\th \right)& \quad \textrm{ for $\t\in (-c ,c )\setminus\{0\}$, 
 $\vp\in\cO$} \vspace{3mm}\\
 H(S_\vp; (1+\vp(\th))\th) &   \quad \textrm{ for $\t=0  $, $\vp\in\cO$}
\end {cases}
\end{equation}
is of class $C^2$ in a neighborhood of $(\tau,\phi)=(0,0)$, and that its linearization at this point is given by   
$D_{\phi} \tilde{H}(0,0)=2d_{N,\a}\{L_\a-  \l_1\}: C^{1,\b}(S)  \to C^{\b-\a}(S)$.  As mentioned earlier, 
$\l_1$ is the first nontrivial eigenvalue of  the operator $L_\a$ with corresponding eigenspace spanned by the 
coordinate functions $\th_1,\dots,\th_N$. This yields  an $N$-dimensional kernel for the linearized NMC operator 
$D_{\phi} \tilde{H}(0,0)$. As mentiones above, this kernel comes from the invariance of the 
NMC operator under translations in $\R^N$.

Thus, in order to apply the implicit function theorem, we need to introduce function subspaces contained in a 
complement of this kernel. We consider
\begin{equation}
  \label{eq:def-space-X}
X=\{\vp\in  C^{1,\beta} (S)\,:\, \vp(-\th)=\vp(\th)\, \textrm{ for all $\th\in S$} \} 
\end{equation}
and
\begin{equation}
  \label{eq:def-space-Y}
Y=\{\vp\in C^{\beta-\a} (S)\,:\, \vp(-\th)=\vp(\th)\, \textrm{ for all $\th\in S$} \} ,
\end{equation}
the spaces of normal deformations which are even with respect to the origin of $\R^N$. In terms of 
the orthogonal basis given by the spherical harmonics, $X$ and $Y$ are generated by the 
spherical harmonics of even degree. 
We then consider the  restriction of  $\tilde{H}:(-c,c)\times (\cO\cap X)\to Y$, which takes values in $Y$ -- and thus 
is well defined -- thanks to the invariance of the lattice $\scrL$ under reflection through the origin. 
Moreover, $D_{\phi} \tilde{H}(0,0)=2d_{N,\a}\{L_\a-  \l_1\}: X \to Y$ will be an isomorphism.

Establishing the  regularity  of the operator $\tilde{H}$ turns out to be the most difficult step in the proof of 
Theorem~\ref{th:main-th-Brav-lat}. This will be done in Section 4.

The computation of the expansion in part (iv) of Theorem \ref{th:main-th-Brav-lat} is not straightforward and 
requires some care. In particular, we note that this is an expansion of order $o(|\tau|^{N+\alpha+2})$, 
whereas we shall see from~(\ref{eq:-1st-def-H}) below that $\tilde{H}$ fails to have more than $C^N$-regularity in the 
$\tau$-variable. 

The paper is organized as follows. In Section 2 we set up the functional analytic formulation of the problem in
order to apply the implicit function theorem. We also state Theorem~\ref{sec:nmc-lattice-prop-h-0} 
(to be proved in Section 4) on the 
smoothness of the NMC operator acting on perturbed spheres. In Section 3 we complete the proof of our main 
result, Theorem~\ref{th:main-th-Brav-lat}, after having stated in Theorem~\ref{sec:nmc-lattice-prop-h}
the main properties of the linearized NMC operator at the unit sphere. This theorem is proved in Section 5,
while the one on the nonlinear NMC (Theorem~\ref{sec:nmc-lattice-prop-h-0}) is established in Section 4.

\section{Preliminaries and functional analytic formulation of the problem}
\label{ss:Brav-lat}
Throughout the remainder of the paper, we let $N \ge 2$, and we let $S \subset \R^N$ 
and $B\subset\R^N$ denote the unit 
sphere and unit ball, respectively.
Let $M\in\N$ with $1\leq M\leq N$, and let $\scrL \subset \R^N$ be an $M$-dimensional lattice as 
defined in \eqref{eq:def-scrL-lattice}. Throughout this paper, we put  
$$
\scrL_*:= \scrL \setminus \{0\}\quad\text{and}\quad\Z^M_*:= \Z^M \setminus \{0\}.
$$
We note that 
\be\label{eq:inf-cJ}
\inf_{p\in\scrL _*  } |p|=: c_0>0
\ee 
and 
 \be
\sum_{p\in \scrL_*  }\frac{1}{|p|^{N+\a}} <\infty.
\ee

As in the introduction, we fix $\b\in (\a,1)$ and define 
\begin{equation}\label{eq:calOset}
\cO:= \{\phi \in C^{1,\b}(S):\: \|\phi\|_\infty < 1\}.
\end{equation}
Moreover, for $\phi \in \cO$, we consider the perturbed sphere 
$$
S_\phi:= \{ (1+ \phi(\s)) \s \::\: \s \in S\}
$$ 
and its parameterization  over the standard sphere defined by
\be \label{eq:def-F_vp}
F_\vp: S \to S_{\vp},\qquad  F_\vp(\s)=(1+\vp(\s)) \s.
\ee
For $\tau \in (-c_0/4,{c_0}/{4})\setminus \{0\}$, we then define 
$$
\cS_\phi^{\tau} :=\bigcup_{p \in \scrL} \Bigl( S_\phi  + \frac{p}{\tau}\Bigr)=S_{\vp}+\frac{1}{\t} \scrL.
$$
By \eqref{eq:inf-cJ} and since $S_\vp \subset B_{2}(0)$,  the set  $\cS_\phi^{\tau}$ is a noncompact hypersurface of class $C^{1,\beta}$, consisting of disjoint connected perturbed spheres and periodic with  respect to the lattice $\frac{1}{\tau} \scrL$. Due to the translation invariance properties of the lattice $\scrL$, the NMC of $ \cS_\phi^{\tau} $ is completely determined by its values on $S_\vp$.    More precisely, we have  
\be
\label{eq:restNMC-other-spheres}
 H(\cS_\phi^{\tau}\,;\, x +     \frac{\ov{p}}{\t})= H(\cS_\phi^{\tau}; x )\qquad \textrm{ for every $\ov{p}\in \scrL\,$ and $x\in S_\vp$}.
\ee
Thus, our aim is to solve the equation 
\be\label{eq:eq-to-solve} 
H(\cS_\phi^{\tau}\,;\,F_\vp(\th))=H(S;\th )=\frac{2d_{N,\alpha}}{\a} \int_{{S}}\frac{1- \s\cdot\th}{|\s-\th|^{N+\a}}\, dV(\s)    
\qquad \text{ for every  $\th\in S.$  } 
\ee
Note that $H(S;\th)$ is constant in $\th$.

In the following, for $\vp \in \cO$, we also let $B_\vp$ denote the unique open bounded set such that $\de B_\vp= S_\vp$, i.e.,
 $$
 B_\vp:= \bigl\{r F_\vp(\s)=r\left(1+ \vp\left(\s\right) \right) \s \,:\, 0\leq r<1,\, \s\in S  \bigr\}.
 $$
Moreover, we let $\nu_{S_\vp}$ denote the unit outer normal vector field on $S_{\vp}= \partial B_\vp$, and we let $dV_{S_\vp}$ denote the volume element on $S_\vp$.
For $\t \in(-c_0/4, {c_0}/{4})  \setminus \{0\}$, and $x \in S_\phi \subset \cS_\phi^{\tau}$, we then have 
\begin{align*}
&\frac{\a}{2d_{N,\a}}H(\cS_\phi^{\tau}\,;x) = \int_{\cS_\phi^\t} \frac{(z-x )\cdot \nu_{\cS_\vp^\t}(z)}{|z-x  |^{N+\alpha}}dV_{\cS_\vp^\t}(z) =  \sum_{p \in \scrL} \int_{S_\phi} \frac{(y-x+ \frac{p}{\tau} )\cdot \nu_{S_\vp}(y)}{|y-x+ \frac{p}{\tau} |^{N+\alpha}}dV_{S_\vp}(y)\\
&= \int_{S_\phi} \frac{(y-x)\cdot \nu_{S_\vp}(y)}{|y-x|^{N+\alpha}}\,dV_{S_\vp}(y)+ \frac{|\t|^{N+\a}}{\t} \sum_{p \in \scrL_*} \int_{S_\phi} \frac{(\t (y-x)+ {p} )\cdot \nu_{S_\vp}(y)}{|\t(y-x)+ {p}|^{N+\alpha}}\,dV_{S_\vp}(y).
\end{align*}

It will be convenient to use an alternative expression of the integrals appearing in the sum which does not involve boundary integration and which immediately shows that the sum is well defined. For this we note that, for fixed $\t \in(-c_0/4, {c_0}/{4})  \setminus \{0\}$, $p\in \scrL_*$ and $x \in S_\phi$, the function $y\mapsto |\tau(y- x )+{p}|^{-N-\alpha+2}$ is smooth in $\ov{B_\vp}$, and for all $ y\in \ov{B_\vp}$ we have 
$$
\n_y  |\tau(y-x)+{p}|^{-N-\alpha}=(-N-\alpha) \t ( \tau(y-x)+ p  ) |\tau(y-x)+{p}|^{-N-\alpha-2}.
$$
Since $S_\phi =\de B_\phi$, the divergence theorem  leads to
\begin{align*}
\int_{S_\phi} \frac{\bigl( \tau(y-x)+ p \bigr) \cdot \nu_{S_\vp }(y)}{|\tau(y-x)+ p |^{N+\alpha}}dV_{S_\vp}(y)&= \int_{B_\vp}\div_y \frac{\bigl( \tau(y- x)+{p}\bigr)  }{|\tau(y - x)+{p}|^{N+\alpha}}\,dy\\
&= - \a \t \int_{B_\vp }\frac{1}{ |\tau(y- x)+{p}|^{N+\alpha}} dy.
\end{align*}
Consequently, writing $x = F_\phi(\th)$ with $\th \in S$, we have 
\begin{equation}\label{eq:first-funct-eq}
\frac{\a}{2d_{N,\a}}H(\cS_\phi^{\tau}\,;\,F_\phi(\th ))= h(\phi) (\th) + |\t|^{N+\a} \sum_{p \in \scrL_*}
G_p(\tau,\phi)(\th )
\end{equation}
for $\th \in S$ and $ \tau \in (- {c_0}/{4}, {c_0}/{4}) \setminus \{0\}$, where 
\begin{equation}
   \label{eq:def-h}
  h(\phi)(\th ):=  \int_{S_\phi} \frac{(y-F_\phi(\th ))\cdot \nu_{S_\vp}(y)}{|y-F_\phi(\th )|^{N+\alpha}}\,dV_{S_\vp}(y)
\end{equation}
and 
\begin{equation}
  \label{eq:1-def-Gk}
G_p(\tau,\phi)(\th ):= -\alpha \int_{B_\vp }\frac{1}{ |\tau(y- F_\phi(\th ))+{p}|^{N+\alpha}} dy  \qquad \textrm{for $p \in \scrL_*$.} %
\end{equation}
Note that $h(\phi)(\th )$ is precisely the NMC of $S_\phi$ at $F_\phi(\th)$.  

In the following,  we will need that both $h$ and $G:= \sum_{p \in \scrL_*} G_p$ define smooth nonlinear operators 
between open subsets of suitable function spaces.
The following is the key result of the present paper in this regard. 

\begin{theorem}
\label{sec:nmc-lattice-prop-h-0}
With $\cO$ defined by \eqref{eq:calOset}, expression \eqref{eq:def-h} gives rise to a well defined map 
$$
h: \cO \to C^{\beta-\alpha}(S)
$$ 
which is of class $C^\infty$.
\end{theorem}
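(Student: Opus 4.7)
The plan is to change variables from $S_\vp$ back to the fixed unit sphere $S$ via the $C^{1,\b}$-diffeomorphism $F_\vp$, and then to control the resulting singular integral kernel carefully, both with respect to $\th$ (for H\"older regularity of the output) and with respect to $\vp$ (for $C^\infty$-dependence as a map into $C^{\b-\a}(S)$). Using the standard formulas for the outer normal and surface element of a radial graph $r=1+\vp(\s)$ over $S$, one rewrites $h(\vp)(\th)=\int_S K(\th,\s,\vp)\,dV(\s)$ with
$$
K(\th,\s,\vp)=\frac{(1+\vp(\s))^{N-2}\bigl(F_\vp(\s)-F_\vp(\th)\bigr)\cdot\bigl[(1+\vp(\s))\s-\nabla_S\vp(\s)\bigr]}{|F_\vp(\s)-F_\vp(\th)|^{N+\a}},
$$
so that the $\vp$-dependence of $K$ enters only through the finitely many pointwise evaluations $\vp(\th)$, $\vp(\s)$, $\nabla_S\vp(\s)$ and through the scalar $|F_\vp(\s)-F_\vp(\th)|$.

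Next I would analyze the singular behavior of $K$ on the diagonal. Since $F_\vp$ is a diffeomorphism, locally uniformly in $\vp\in\cO$ one has $|F_\vp(\s)-F_\vp(\th)|\ge c\,|\s-\th|$. Using the orthogonality $\s\cdot\nabla_S\vp(\s)=0$ and a Taylor expansion of $\vp(\s)-\vp(\th)$ about $\th$, a direct computation shows that the numerator of $K$ vanishes to order $|\s-\th|^{1+\b}$, reflecting the fact that the displacement $F_\vp(\s)-F_\vp(\th)$ is tangential to $S_\vp$ at $F_\vp(\s)$ up to a $C^{\b}$-remainder. Thus $|K(\th,\s,\vp)|\le C|\s-\th|^{1+\b-N-\a}$, which is integrable on $S$ since $\b>\a$. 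A standard splitting of the difference $K(\th_1,\s,\vp)-K(\th_2,\s,\vp)$ into the regions $|\s-\th_1|\le 2|\th_1-\th_2|$ and its complement then yields $h(\vp)\in C^{\b-\a}(S)$ with a norm bound locally uniform in $\vp$.

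To establish smoothness in $\vp$, I would observe that $K$ is the composition of the linear evaluation maps $\vp\mapsto(\vp(\th),\vp(\s),\nabla_S\vp(\s))$ and $\vp\mapsto F_\vp(\s)-F_\vp(\th)$ with a real-analytic scalar function on the admissible open set. Formally differentiating $K$ $k$ times in $\vp$ in directions $\psi_1,\ldots,\psi_k\in C^{1,\b}(S)$ produces a finite sum of kernels of the same structural form: the two extra orders of singularity coming from each differentiation of $|F_\vp(\s)-F_\vp(\th)|^{-N-\a-2m}$ are compensated by two extra orders of vanishing of the numerator at $\s=\th$, arising from inner products of $F_\vp(\s)-F_\vp(\th)$ with quantities of the form $\psi_j(\s)\s-\psi_j(\th)\th$. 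An induction on $k$ thus yields a locally bounded multilinear map $D^k h(\vp)\colon(C^{1,\b}(S))^k\to C^{\b-\a}(S)$, and a Taylor remainder estimate combined with dominated convergence on the pointwise kernel derivatives promotes these formal derivatives to genuine Fr\'echet derivatives.

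The main obstacle is the bookkeeping in the last step: at every order of differentiation the geometric cancellation in the numerator must continue to balance the extra powers of $|F_\vp(\s)-F_\vp(\th)|^{-1}$ produced by differentiating the denominator, and all constants need to be uniform on $C^{1,\b}$-bounded subsets of $\cO$. The underlying reason the cancellation persists is the tangential structure of the normal factor together with the identity $\s\cdot\nabla_S\vp(\s)=0$; but writing the induction cleanly, and controlling the $\th$-difference of each resulting kernel -- which is needed for the $C^{\b-\a}$-regularity of the derivatives, not just their boundedness in $L^\infty$ -- is the technical heart of the argument.
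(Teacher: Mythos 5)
Your proposal follows essentially the same route as the paper: pulling the integral back to $S$ via $F_\vp$, factoring the singularity as $|\th-\s|^{-N-\a}$ times a nonsingular, smoothly $\vp$-dependent kernel, exploiting the Taylor-remainder cancellation $\vp(\th)-\vp(\s)-(\th-\s)\cdot\n\vp(\s)=O(|\th-\s|^{1+\b})$ together with the order-two cancellation of $(F_\vp(\s)-F_\vp(\th))\cdot(\psi(\s)\s-\psi(\th)\th)$ produced by each differentiation of the denominator, and then upgrading the formal kernel derivatives to genuine Fr\'echet derivatives by induction with a quadratic remainder bound. The only organizational difference is that the paper fixes the singular weight by composing with rotations $R_\th$ (so the H\"older-in-$\th$ comparison is made against the $\th$-independent weight $|e-\s|^{-N-\a}$), whereas you propose the standard near/far splitting; both rest on the same difference estimates for the Taylor-remainder terms, which you correctly single out as the technical heart.
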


In the following, and with some abuse due to multiplicative constants, 
we will also call $h$ the nonlocal mean curvature operator over the sphere $S$. 
The proof of Theorem~\ref{sec:nmc-lattice-prop-h-0} is long and technically involved due to the singularity in the 
integrand in \eqref{eq:def-h}. Nevertheless, the result is a key step in our approach, 
and we believe that it might be of independent interest. We postpone the proof of Theorem~\ref{sec:nmc-lattice-prop-h-0} 
to Section~\ref{s:reg-NMC}; see Theorem~\ref{theorem-smoothness-h-1} below. 

With regard to $G_p$, we have a similar result.

\begin{proposition}
\label{sec:nmc-lattice-prop-Gk}
For $p \in \scrL_*$ and $\cO$ defined by \eqref{eq:calOset}, expression \eqref{eq:1-def-Gk} gives rise
to a well defined map 
$$
{G}_p: (-\frac{c_0}{4},\frac{c_0}{4}) \times \cO \to C^{\beta-\alpha}(S)
$$ 
which is of class $C^\infty$. Moreover, the map 
\be \label{eq:defG}
{G}:= \sum \limits_{p \in \scrL_*} {G}_p:  (-\frac{c_0}{4},\frac{c_0}{4}) \times \cO \to C^{\beta-\alpha}(S)
\ee
is well defined and of class $C^\infty$. 
\end{proposition}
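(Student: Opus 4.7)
The plan is to exploit the fact that, in contrast with Theorem~\ref{sec:nmc-lattice-prop-h-0}, the integrand in \eqref{eq:1-def-Gk} is nonsingular for every $p \in \scrL_*$. Indeed, for any $(\tau_0, \phi_0) \in (-c_0/4,c_0/4) \times \cO$, one may find a small neighborhood $U$ of $(\tau_0,\phi_0)$ and a constant $\delta>0$ such that, for all $(\tau,\phi) \in U$,
\begin{equation*}
|\tau(y - F_\phi(\theta)) + p| \ge \delta\, |p| > 0 \qquad \text{for all } y \in \overline{B_\phi},\ \theta \in S,\ p \in \scrL_*;
\end{equation*}
this follows from the bounds $|y - F_\phi(\theta)| \le 2(1+\|\phi\|_\infty) < 4$, $|\tau| < c_0/4$, and $|p| \ge c_0$. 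Consequently, the whole argument reduces to differentiation under the integral sign and a convergent lattice sum, and no principal-value analysis is required.

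To carry this out, I would first transform the $\phi$-dependent integration domain $B_\phi$ to the fixed domain $[0,1] \times S$. Writing $y = r(1+\phi(\omega))\omega$ with $r \in [0,1]$ and $\omega \in S$, the Jacobian equals $(1+\phi(\omega))^N r^{N-1}$, so that
\begin{equation*}
G_p(\tau,\phi)(\theta) = -\alpha \int_S \int_0^1 \frac{(1+\phi(\omega))^N r^{N-1}}{\bigl|\tau\bigl(r(1+\phi(\omega))\omega - (1+\phi(\theta))\theta\bigr) + p\bigr|^{N+\alpha}}\, dr\, dV(\omega).
\end{equation*}
With the lower bound on the denominator, the integrand is jointly $C^\infty$ in $(\tau, \phi, \theta)$ and continuous in $(r,\omega)$. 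A standard differentiation-under-the-integral-sign argument then shows that $G_p$ maps $(-c_0/4,c_0/4) \times \cO$ smoothly into $C^\infty(S)$, which in turn embeds continuously into $C^{\beta-\alpha}(S)$.

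For the series $G = \sum_{p \in \scrL_*} G_p$, I would keep track of how each derivative scales in $p$. Inspecting the integrand, one sees that every partial derivative of order $j$ in $\tau$, of order $k$ in $\phi$, and of order $m$ in $\theta$ either produces a bounded factor (from differentiating the position vector or the polynomial Jacobian) or raises the exponent in the denominator by one, while the denominator remains bounded below by $\delta|p|$. Hence all such derivatives of $G_p$ are bounded in $C^m(S)$ by $C_{j,k,m}\,|p|^{-N-\alpha}$, uniformly on bounded subsets of $U$ and of the directions in $C^{1,\beta}(S)^k$. Since the lattice $\scrL$ has dimension $M \le N$ and $\alpha > 0$, the sum $\sum_{p \in \scrL_*} |p|^{-N-\alpha}$ converges, and so the series for $G$ and each of its derivatives converges absolutely and uniformly. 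This yields $G \in C^\infty((-c_0/4,c_0/4) \times \cO;\,C^{\beta-\alpha}(S))$.

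The only mildly technical point will be the bookkeeping of multilinear Fr\'echet derivatives in $\phi$ needed to verify the uniform $|p|^{-N-\alpha}$ decay. Beyond this, no genuine analytic obstacle arises, since the singular character of the kernel in \eqref{eq:def-h} is entirely absent here thanks to the separation $4|\tau| < c_0 \le |p|$.
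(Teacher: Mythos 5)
Your proposal follows essentially the same route as the paper's own (sketched) proof: pass to polar coordinates to rewrite $G_p$ as an integral over $[0,1]\times S$ with Jacobian $(1+\phi(\omega))^N r^{N-1}$, use $4|\tau|<c_0\le |p|$ (with a small margin, obtained by localizing or, in the paper, by replacing $c_0$ with any $c_1<c_0$) to bound the denominator below by a multiple of $|p|$, differentiate under the integral sign, and sum the series using the uniform $|p|^{-N-\alpha}$ decay of all derivatives together with the convergence of $\sum_{p\in\scrL_*}|p|^{-N-\alpha}$. One inaccuracy should be corrected: since the integrand depends on $\theta$ also through $\phi(\theta)$ and $\phi$ is only $C^{1,\beta}(S)$, the function $G_p(\tau,\phi)$ is \emph{not} in $C^\infty(S)$ in general, and derivatives in $\theta$ of order $m\ge 2$ are not available; so the claim that $G_p$ maps smoothly into $C^\infty(S)$, and the $C^m(S)$ bounds for arbitrary $m$, are overstatements. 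The fix is exactly what the paper does: estimate the (Fr\'echet) derivatives of the integrand in the $C^1(S)$ norm only, obtaining bounds of the form $c_\gamma(1+\|\phi\|_{C^1(S)})^{c_\gamma}|p|^{-N-\alpha}$, and then use the embeddings $C^{1,\beta}(S)\hookrightarrow C^1(S)\hookrightarrow C^{\beta-\alpha}(S)$; with this modification (i.e., your argument with $m=1$) the proof goes through and coincides with the paper's.
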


The proof of Proposition~\ref{sec:nmc-lattice-prop-Gk} is also lenghty if all details are carried out, but it is much 
easier than the proof of Theorem~\ref{sec:nmc-lattice-prop-h-0} since the integrand   in 
(\ref{eq:1-def-Gk}) is not singular. We will outline the proof of Proposition~\ref{sec:nmc-lattice-prop-Gk} 
at the end of Section~\ref{s:reg-NMC} below.

We conclude this section by introducing the nonlinear operator 
$$
\cH: (-c_0/4, {c_0}/{4}) \times \cO \to C^{\beta-\alpha}(S) 
$$
given  by 
\begin{align}\label{eq:-1st-def-H} 
\cH(\tau,\phi)(\th):=  h(\phi)(\th )+ |\tau|^{N+\alpha} G (\tau,\phi)(\th ) 
\end{align}
for $\tau \in (-c_0/4, {c_0}/{4})$, $\vp \in \cO$ and $\th \in S$. By construction, we then have 
\be \label{eq:link-H--NMC}
\cH(\tau,\phi)(\th)=\frac{\a}{2d_{N,\alpha}}  H(\cS^\t_\vp\,;\,F_\vp(\th)) \qquad \text{for $\tau \in (-c_0/4, {c_0}/{4}) \setminus \{0\}$,}
\ee
i.e., the value $\cH(\tau,\phi)(\th)$ equals the NMC of $\cS^\t_\vp$ at the point $F_\vp(\th)$ up to a 
multiplicative constant.  We may thus formulate the parameter-dependent equation \eqref{eq:eq-to-solve} 
as an  operator equation in Hölder spaces. More precisely, we need to study the set of parameters $\tau \in (-c_0/4, {c_0}/{4})$ and functions $\phi \in \cO$ satisfying 
\begin{equation}
\label{eq:op-eq}
\cH(\tau,\phi) = h(0) \qquad \text{in $C^{\beta-\alpha}(S).$}
\end{equation}

Note that we will have
\begin{equation}
  \label{eq:C^2-cH-unrestricted}
\cH \in C^2\bigl( (-{c_0}/{4},{c_0}/{4}) \times \cO, C^{\beta-\alpha}(S)\bigr)
\end{equation}
as a consequence of Theorem~\ref{sec:nmc-lattice-prop-h-0}, Proposition~\ref{sec:nmc-lattice-prop-Gk} and the fact that the map $\t\mapsto |\t|^{N+\a}$ is of class $C^2$ since $N \ge 2$. Moreover, we have 
\begin{equation}
  \label{eq:C^2-cH-unrestricted-lin}
D_\vp \cH(0,0)= Dh(0) \in \cL \bigl (C^{1,\beta}(S), C^{\beta-\alpha}(S)\bigr).
\end{equation}

In the next section we restrict our attention to even functions and use the implicit function theorem to find a locally unique solution curve $\tau \mapsto (\tau,\phi(\tau))$ 
solving equation (\ref{eq:op-eq}) and with $\phi(0)=0$. 

%
%

\section{Completion of the proof of Theorem 1.1}
\label{sec:compl-proof-theor-1}
For fixed $\a\in (0,1)$ and $\b\in (\a, 1)$, we  consider, as in the introduction, the spaces  
$$
X=\{\vp\in  C^{1,\beta} (S)\,:\, \vp(-\th)=\vp(\th)\, \textrm{ for all $\th\in S$} \} ,
$$  
$$
Y=\{\vp\in C^{\beta-\a} (S)\,:\, \vp(-\th)=\vp(\th)\, \textrm{ for all $\th\in S$} \}.
$$ 
We claim that the operator $\cH$ defined in (\ref{eq:-1st-def-H}) restricts to a map 
$$
(-c_0/4, {c_0}/{4}) \times (\cO \cap X)  \to Y,
$$
which we will also denote by $\cH$ in the following. Indeed, for $\vp\in \cO \cap X$ and $\th \in S$ we have 
$-S_\vp=S_\vp$, $F_\vp(-\th)=-F_\vp(\th)$ and $\nu_{S_\vp}(-y)=-\nu_{S_\vp}(y)$ for $y \in S$. 
Thus, by a change of variables in (\ref{eq:def-h}), we have that   
\begin{align*}
h(\phi)(-\th )=  \int_{S_\phi} \frac{(y+F_\phi(\th ))\cdot \nu_{S_\vp}(y)}{|y+F_\phi(\th )|^{N+\alpha}}\,dV_{S_\vp}(y)&=
\int_{S_\phi} \frac{(F_\phi(\th )-y)\cdot \nu_{S_\vp}(-y)}{|y-F_\phi(\th )|^{N+\alpha}}\,dV_{S_\vp}(y)\\
&= \int_{S_\phi} \frac{(y-F_\phi(\th ))\cdot \nu_{S_\vp}(y)}{|y-F_\phi(\th )|^{N+\alpha}}\,dV_{S_\vp}(y)=h(\phi)(\th ). 
\end{align*}
Similarly, from (\ref{eq:1-def-Gk}), \eqref{eq:defG} and the fact that $-\scrL_*=\scrL_*$, we derive that 
$$
G(\tau,\phi)(-\th)= G(\tau,\phi)(\th) \qquad \text{for $(\tau,\vp) \in (-c_0/4,c_0/4)\times ( \cO\cap X)$ and $\th \in S$.}
$$
Consequently, it follows from (\ref{eq:-1st-def-H}) that $\cH$ maps $(-c_0/4, {c_0}/{4}) \times (\cO \cap X)$ into $Y$, as claimed. 

Moreover, by (\ref{eq:C^2-cH-unrestricted}) we have 
\begin{equation}
  \label{eq:C^2-cH-restricted}
\cH \in C^2\bigl( (-{c_0}/{4},{c_0}/{4}) \times (\cO \cap X),Y\bigr).
\end{equation}
Using the implicit function theorem within the spaces $X$ and $Y$, we will derive a locally unique curve $\tau \mapsto \vp(\tau) \in X$ such that $\vp(0)=0$ and 
$$
\cH(\tau,\vp(\t))=  h(0)  \qquad \text{in $Y$} 
$$
for $|\tau|$ sufficiently small. For this we shall need the following invertibility property of $Dh(0) \in \cL(X,Y)$, which by (\ref{eq:C^2-cH-unrestricted-lin}) coincides with $D_\phi \cH(0,0) \in \cL(X,Y)$.

\begin{theorem}
\label{sec:nmc-lattice-prop-h}
The linearized operator $Dh(0) \in \cL(C^{1,\beta}(S),C^{\beta-\alpha}(S))$ is given by  
$$
\frac{1}{\a}Dh(0)\vp=L_\a \vp -  \l_1\vp  \qquad \text{for $\phi \in C^{1,\beta}(S)$,}
$$
where 
$$
L_\a\vp(\th )=PV \int_S\frac{\vp(\th)-\vp(\s)}{|\th -\s|^{N+\a}}\, dV(\sigma)\qquad \text{for $\th \in S$}
$$
and $\lambda_1$ is given in \eqref{eq:l_k-0} for $k=1$. Moreover, $Dh(0)$ is an isomorphism when considered as a linear operator 
from $X$ to $Y$.
\end{theorem}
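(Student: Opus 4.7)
The plan is to compute $Dh(0)$ directly as the Fréchet derivative (whose existence is guaranteed by Theorem~\ref{sec:nmc-lattice-prop-h-0}), identify it explicitly with $\alpha(L_\alpha - \lambda_1)$, and then establish the isomorphism on $X \to Y$ by combining the spectral decomposition of $L_\alpha$ with a Fredholm-alternative argument. I would begin by pulling back the definition of $h(\phi)$ in \eqref{eq:def-h} along $F_\phi$, writing
\[
h(\phi)(\theta) = \int_S \frac{(F_\phi(\sigma) - F_\phi(\theta)) \cdot \nu_{S_\phi}(F_\phi(\sigma))}{|F_\phi(\sigma) - F_\phi(\theta)|^{N+\alpha}} \, J_\phi(\sigma) \, dV(\sigma),
\]
where $J_\phi$ is the Jacobian of $F_\phi$ and the pulled-back unit normal $\nu_{S_\phi}\circ F_\phi$ admits an explicit formula in terms of $\phi$ and the spherical gradient $\nabla_S\phi$.

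Setting $\phi = t\varphi$ and Taylor-expanding each factor to first order in $t$, I would use $F_{t\varphi}(\sigma) - F_{t\varphi}(\theta) = (\sigma-\theta) + t(\varphi(\sigma)\sigma - \varphi(\theta)\theta) + O(t^2)$, $J_{t\varphi}(\sigma) = 1 + t(N-1)\varphi(\sigma) + O(t^2)$, $\nu_{S_{t\varphi}}\circ F_{t\varphi}(\sigma) = \sigma - t\nabla_S\varphi(\sigma) + O(t^2)$, and $|F_{t\varphi}(\sigma) - F_{t\varphi}(\theta)|^{N+\alpha} = |\sigma-\theta|^{N+\alpha}\bigl[1 + \tfrac{t(N+\alpha)}{2}(\varphi(\sigma) + \varphi(\theta))\bigr] + O(t^2)$. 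Combined with the two key identities $1 - \sigma\cdot\theta = \tfrac12|\sigma-\theta|^2$ and $\sigma\cdot\nabla_S\varphi(\sigma) = 0$, collecting the $O(t)$ contributions casts $Dh(0)\varphi(\theta)$ as an $|\sigma-\theta|^{-N-\alpha}$-singular integral of a linear combination of $\varphi(\sigma)$, $\varphi(\theta)$ and one tangential-gradient term of the form $\theta^T(\sigma)\cdot\nabla_S\varphi(\sigma)/|\sigma-\theta|^{N+\alpha}$, where $\theta^T(\sigma) := \theta - (\theta\cdot\sigma)\sigma$ is the projection of $\theta$ onto $T_\sigma S$. Dispatching this last term by integration by parts on the closed manifold $S$---interpreted carefully in the principal value sense, so that no contribution from the excised $\varepsilon$-neighborhood of $\sigma=\theta$ is dropped---is the main technical obstacle, and is precisely what produces the singular kernel $\alpha|\sigma-\theta|^{-N-\alpha}$ acting by PV on $\varphi(\theta)-\varphi(\sigma)$. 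The constant $-\alpha\lambda_1$ in front of $\varphi(\theta)$ can be extracted directly from the calculation or, more cleanly, pinned down by rotational symmetry: $Dh(0)$ commutes with the action of $O(N)$, so $\mathbf{1}$ is an eigenfunction, and $Dh(0)\mathbf{1}$ is computed by differentiating the scaling law $h(c\mathbf{1}) = (1+c)^{-\alpha}\lambda_1$ at $c=0$, which gives $Dh(0)\mathbf{1} = -\alpha\lambda_1\mathbf{1}$, consistent with \eqref{eq:NMC-sphere} and \eqref{eq:l_k-0}.

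For the isomorphism claim $Dh(0)\colon X \to Y$, I would rely on the spectral decomposition of $L_\alpha$ in \eqref{eq:l_k-0}: its eigenspaces are the spaces of spherical harmonics of degree $k$, with strictly increasing eigenvalues $0 = \lambda_0 < \lambda_1 < \lambda_2 < \cdots$. Since $X$ and $Y$ correspond (in their respective Hölder topologies) to spherical harmonics of even degree, the spectrum of $Dh(0) = \alpha(L_\alpha - \lambda_1)$ restricted to $X$ is
\[
\bigl\{\alpha(\lambda_{2k} - \lambda_1) : k \ge 0\bigr\} = \bigl\{-\alpha\lambda_1,\; \alpha(\lambda_2-\lambda_1),\; \alpha(\lambda_4-\lambda_1),\dots\bigr\},
\]
every element of which is nonzero and bounded away from zero. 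Injectivity of $Dh(0)\colon X \to Y$ follows at once by expanding in spherical harmonics. For surjectivity together with a bounded inverse, I would invoke that $L_\alpha$ is, up to a lower-order smoothing remainder, the spherical fractional Laplacian of order $\alpha$, so $L_\alpha - \lambda_1 \colon C^{1,\beta}(S) \to C^{\beta-\alpha}(S)$ is Fredholm of index zero by Schauder-type regularity for integro-differential operators on the compact manifold $S$. This property descends to the restriction $X \to Y$, and combined with the injectivity above forces surjectivity; the open mapping theorem then supplies the bounded inverse.
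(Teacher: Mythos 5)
Your treatment of the linearization formula is essentially the paper's own argument, reorganized: since Fr\'echet differentiability is already supplied by Theorem~\ref{sec:nmc-lattice-prop-h-0}, it is legitimate to identify $Dh(0)\varphi$ by expanding $h(t\varphi)$ along the pulled-back integral; your expansions of $F_{t\varphi}(\sigma)-F_{t\varphi}(\theta)$, $J_{t\varphi}$, $\nu_{S_{t\varphi}}\circ F_{t\varphi}$ and of the kernel are correct, and the step you single out as the technical crux -- integrating the term $(\th-\s)\cdot\nabla\varphi(\s)\,|\th-\s|^{-N-\alpha}$ by parts on $S$ with the $\eps$-ball excised and checking that the boundary contribution on $\partial B_\eps(\th)$ vanishes -- is exactly what the paper does in the proof of Lemma~\ref{lem:lin-NMC-op} (there organized through the decomposition \eqref{eq:decomposition-h-tilde-h} and the kernels $M_j$). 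Fixing the zero-order coefficient by rotational invariance and the scaling identity $h(c\mathbf{1})=(1+c)^{-\alpha}\lambda_1$ is a harmless shortcut, consistent with \eqref{eq:def-rem-l1-0}.

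The gap is in the isomorphism part. Your skeleton (parity kills $\cE_1$, so the kernel on $X$ is trivial; Fredholm of index zero plus injectivity gives an isomorphism) coincides with the paper's, but the decisive analytic input -- that $L_\alpha-\lambda_1$, equivalently $L_\alpha+\id$, is Fredholm of index zero from $C^{1,\beta}(S)$ to $C^{\beta-\alpha}(S)$ -- is precisely what occupies most of Section~5 of the paper, and you dispose of it with an appeal to ``Schauder-type regularity for integro-differential operators on $S$.'' Two objections: first, a Schauder estimate by itself gives at most a finite-dimensional kernel and closed range; to obtain index zero one needs an invertible reference operator, which the paper constructs by proving that $L_\alpha+\id:C^{1,\beta}(S)\to C^{\beta-\alpha}(S)$ is an isomorphism (Sobolev-space spectral isomorphism in Lemma~\ref{sec:spectr-prop-line-sobolev}, the estimate of Lemma~\ref{sec:line-nmc-oper-1} obtained by comparing $L_\alpha\psi$ with $(-\Delta)^{(1+\alpha)/2}$ of the homogeneous extension $u(x)=\psi(x/|x|)$ and invoking known interior regularity, an interpolation lemma, and a density argument in Proposition~\ref{sec:spectr-prop-line-hoelder}), after which $L_\alpha-\lambda_1=(L_\alpha+\id)-(1+\lambda_1)\id$ is a compact perturbation because $X\hookrightarrow Y$ is compact. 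If you instead wish to quote general elliptic theory you must at least identify $L_\alpha$ as elliptic of order $1+\alpha$ (not $\alpha$, as you wrote: $\lambda_k\sim k^{1+\alpha}$, matching the gain $C^{1,\beta}\to C^{\beta-\alpha}$) and justify index zero in this precise H\"older scale; as stated this is a black box covering the hardest part of the theorem. Second, two smaller points need a word each: injectivity on $X$ ``by expanding in spherical harmonics'' presupposes that the pointwise PV operator on $C^{1,\beta}(S)$ acts diagonally on the expansion, i.e.\ the identification $L_\alpha\psi=\widetilde L_\alpha\psi$ of Lemma~\ref{sec:spectr-prop-line-sobolev}(ii) (or, alternatively, symmetry of the kernel and testing against smooth harmonics); and ``descends to $X\to Y$'' with index zero requires noting that $L_\alpha+\id$ and its inverse preserve parity, so the restriction to even functions is again a compact perturbation of an isomorphism.
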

The proof of this theorem relies, in particular, on the spectral decomposition of the operator $L_\alpha$ and 
regularity estimates between Hölder spaces. It will be given in Section~\ref{sec:line-nmc-oper};
see Lemma~\ref{lem:lin-NMC-op} and Theorem~\ref{prop:Dh0-invert} below.

The following proposition is the result of applying the implicit function theorem. In its proof, we will need that
\begin{equation}
  \label{eq:H-even-tau}  
\cH(-\tau,\phi)= \cH(\tau,\phi) \qquad \text{for $\t \in (-c_0/4,c_0/4)$, $\phi \in \cO$,}  
\end{equation}
which is as a consequence of \eqref{eq:-1st-def-H}  and the fact that $-\scrL_*=\scrL_*.$

\begin{proposition}
\label{sec:nmc-lattice-implicit-function}
There exists $\t_0>0$ and an open neighborhood $\cU \subset X$ of $0$
for which there exists a unique curve $(-\t_0, \t_0) \to \cU$, $\tau \mapsto \phi(\tau)$, with $\phi(0)=0$ and 
\begin{equation}
  \label{eq:implicit-equation}
\cH(\tau,\phi(\tau))= h(0) \qquad \text{in $Y,\,\,\qquad$ for $-\t_0 < \tau < \t_0$.}
\end{equation}
Moreover, $\phi$ is of class $C^2$,  satisfies $\vp(-\t)=\vp(\t)$ and  the expansion 
\begin{equation}
  \label{eq:expansion-phi-tau-0}
\phi(\tau)= - |\tau|^{N+\alpha}\Bigl((Dh(0))^{-1} \Phi_0  
 + \frac{\tau^2}{6} (Dh(0))^{-1} \Phi_2+ o(\tau^2) \Bigr),
\end{equation}
where $\Phi_j:= \partial_\tau^j G(0,0) \in Y$, $j =0,2$, and $\frac{o(\tau^2)}{\tau^2} \to 0$ in $C^{1,\beta}(S)$ as $\tau \to 0$. 
\end{proposition}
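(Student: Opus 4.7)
The plan is to apply the implicit function theorem to the operator $\cH$ from~\eqref{eq:-1st-def-H}, viewed as a map
$$
\cH: (-c_0/4,\, c_0/4) \times (\cO \cap X) \to Y,
$$
at the base point $(\tau, \phi) = (0, 0)$. Three facts must be checked, and all are immediate from what has already been set up: first, $\cH(0, 0) = h(0)$ because the prefactor $|\tau|^{N+\alpha}$ annihilates the $G$-contribution at $\tau = 0$; second, $\cH$ is of class $C^2$ by~\eqref{eq:C^2-cH-restricted}; and third, the partial derivative $D_\phi \cH(0, 0) = Dh(0)$ (see~\eqref{eq:C^2-cH-unrestricted-lin}) is a Banach-space isomorphism from $X$ onto $Y$ by Theorem~\ref{sec:nmc-lattice-prop-h}. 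The implicit function theorem then yields $\tau_0 > 0$, an open neighborhood $\cU \subset X$ of $0$, and a unique $C^2$-curve $\phi : (-\tau_0, \tau_0) \to \cU$ with $\phi(0) = 0$ satisfying~\eqref{eq:implicit-equation}.

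The parity $\phi(-\tau) = \phi(\tau)$ is an immediate consequence of~\eqref{eq:H-even-tau}. Indeed, the curve $\tilde\phi : \tau \mapsto \phi(-\tau)$ is also $C^2$, takes values in $\cU$ for $|\tau|$ small, vanishes at $\tau = 0$, and satisfies $\cH(\tau, \tilde\phi(\tau)) = \cH(-\tau, \phi(-\tau)) = h(0)$; local uniqueness in the implicit function theorem therefore forces $\tilde\phi = \phi$.

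For the expansion~\eqref{eq:expansion-phi-tau-0}, I would work directly from the defining relation. Setting $R(\phi) := h(\phi) - h(0) - Dh(0) \phi$, which by Theorem~\ref{sec:nmc-lattice-prop-h-0} satisfies $\|R(\phi)\|_Y = O(\|\phi\|_X^2)$ as $\phi \to 0$, the equation $\cH(\tau, \phi(\tau)) = h(0)$ rewrites as
$$
\phi(\tau) \,=\, -|\tau|^{N+\alpha} (Dh(0))^{-1} G(\tau, \phi(\tau)) \,-\, (Dh(0))^{-1} R(\phi(\tau)).
$$
Combining the local boundedness of $G$ from Proposition~\ref{sec:nmc-lattice-prop-Gk} with the continuity of $(Dh(0))^{-1} : Y \to X$, this identity first yields the coarse estimate $\|\phi(\tau)\|_X = O(|\tau|^{N+\alpha})$ as $\tau \to 0$.

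To reach the order $o(|\tau|^{N+\alpha+2})$, I would exploit that $N + \alpha > 2$ whenever $N \ge 2$ and $\alpha \in (0, 1)$, so that the nonlinear remainder $\|R(\phi(\tau))\|_Y = O(|\tau|^{2(N+\alpha)})$ and the $\phi$-correction $\||\tau|^{N+\alpha} [G(\tau, \phi(\tau)) - G(\tau, 0)]\|_Y = O(|\tau|^{2(N+\alpha)})$ are both $o(|\tau|^{N+\alpha+2})$. Within that precision, $G(\tau, \phi(\tau))$ may be replaced by $G(\tau, 0)$, and Proposition~\ref{sec:nmc-lattice-prop-Gk} permits a second-order Taylor expansion of $\tau \mapsto G(\tau, 0) \in Y$; parity in $\tau$ (a consequence of~\eqref{eq:H-even-tau}) kills the linear term, and substitution into the fixed-point relation produces~\eqref{eq:expansion-phi-tau-0}. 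The main subtlety to address is that all Taylor remainders must be controlled in the $C^{1, \beta}$-norm on $S$ and not merely pointwise; this reduces to Taylor's theorem with integral remainder in the Banach spaces $X$ and $Y$, together with the bounded invertibility of $Dh(0)$ supplied by Theorem~\ref{sec:nmc-lattice-prop-h}.
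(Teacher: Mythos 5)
Your proposal is correct and follows essentially the same route as the paper: the implicit function theorem for $\cH$ on $(-c_0/4,c_0/4)\times(\cO\cap X)$ via Theorem~\ref{sec:nmc-lattice-prop-h}, evenness in $\tau$ from \eqref{eq:H-even-tau} combined with local uniqueness, and the same bootstrap $\|\phi(\tau)\|_X=O(|\tau|^{N+\alpha})$ followed by a second-order Taylor expansion in $\tau$ (the paper expands $g(\tau)=G(\tau,\phi(\tau))$ along the curve using $\phi'(0)=\phi''(0)=0$, whereas you replace $G(\tau,\phi(\tau))$ by $G(\tau,0)$ up to an $O(|\tau|^{N+\alpha})=o(\tau^2)$ error controlled by Proposition~\ref{sec:nmc-lattice-prop-Gk} -- an inessential variation). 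One caution: carried out to the end, your substitution (exactly like the paper's own Taylor computation with weights $\tau^j/j!$) yields the coefficient $\tfrac{\tau^2}{2}$ in front of $(Dh(0))^{-1}\Phi_2$, so it does not literally produce the factor $\tfrac{\tau^2}{6}$ written in \eqref{eq:expansion-phi-tau-0}; verify that constant rather than asserting the stated formula follows as written.
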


\begin{proof}
Applying the implicit function theorem to the $C^2$-map $\cH: (-c_0/4, {c_0}/{4}) \times (\cO \cap X)  \to Y$ 
at the point $(0,0) \in (-\frac{c_0}{4},\frac{c_0}{4}) \times X$ and using Theorem~\ref{sec:nmc-lattice-prop-h}, we find $\t_0 \in (0, c_0/4)$ and a unique $C^2$-regular curve $(-\t_0, \t_0) \to \cU$, $\tau \mapsto \phi(\tau)$ such that 
(\ref{eq:implicit-equation}) holds. By \eqref{eq:H-even-tau}, we also have that 
$$
\vp(-\t)=\vp(\t)\qquad \text{ for every $\t\in (-\t_0,\t_0)$}.
$$

It thus remains to prove the expansion (\ref{eq:expansion-phi-tau-0}). For this we consider the $C^2$-curve 
$$
g: (-\t_0,\t_0) \to Y, \qquad g(\tau):= G(\tau,\phi(\tau)).
$$
Then (\ref{eq:implicit-equation}) can be written as 
$$
0=h(\phi(\tau))- h(0) + |\tau|^{N+\alpha} g(\tau) = Dh(0)\phi(\tau) + O(\|\phi(\tau)\|_X^{2}) + |\tau|^{N+\alpha}g(\tau) .
$$
Consequently,  we have 
\be\label{eq:intermed}
\phi(\tau) = - |\tau|^{N+\alpha} (Dh(0))^{-1} g(\tau)+ O(\|\phi(\tau)\|_X^{2}),
\ee
and thus the curve 
\be \label{eq:def-curv-psi}
\tau \mapsto \psi(\tau):=|\tau|^{-N-\alpha} \phi(\tau)
\ee
satisfies the expansion $\psi(\tau)=   -(Dh(0))^{-1}g(\tau)  + O( |\tau|^{N+\alpha}\|\psi(\tau)\|_X^{2})$ and,
in particular,
\begin{equation}
\label{2nd-exp-psi}
\psi(\tau) =  -(Dh(0))^{-1}g(\tau)  + o(\tau^2).
\end{equation}

We also note that  
$$
g'(\tau)= \partial_\tau G(\tau,\phi(\tau))+ \partial_\phi G(\tau,\phi(\tau))\phi'(\tau) 
$$
and 
\begin{align*}
g''(\tau) &= \partial_\tau^2 G(\tau,\phi(\tau)) + 2 \partial_\phi \partial_\tau G(\tau,\phi(\tau)) \phi'(\tau)\\
&+ \partial_\phi^2 G(\tau,\phi(\tau))[\phi'(\tau), \phi'(\tau)] + \partial_\phi G(\tau,\phi(\tau))\phi''(\tau) \nonumber
 \end{align*}
for $\tau \in (-\t_0,\t_0)$. Moreover, by \eqref{eq:intermed} we have $\phi(\tau)= O(|\tau|^{N+\alpha})$, and hence 
$\phi(0)=\phi'(0)= \phi''(0)=0$. We deduce
$$
g(0)= G(0,0),\qquad g'(0)= \partial_\tau G(0,0) \qquad \text{and}\qquad g''(0)= \partial_\tau^2 G(0,0).
$$
We thus infer that $g(\tau) = \sum \limits_{j=0}^2 \frac{\tau^j}{j!} \partial_\tau^j G(0,0)  + o(\tau^2)$, and together with \eqref{2nd-exp-psi} this yields the expansion
$$
\psi(\tau)=   - \sum_{j=0}^2 \frac{\tau^j}{j!} (Dh(0))^{-1} \partial_\tau^j G(0,0)  + o(\tau^2) .
$$
Using this in \eqref{eq:def-curv-psi}, and recalling that $\phi$ is even in $\t$ (and thus so is $\psi$),  
we get the expansion (\ref{eq:expansion-phi-tau-0}), as claimed.
\end{proof}

In the next two lemmas we compute the precise asymptotic expansion in $\tau$ for the perturbation $\phi$.

\begin{lemma}
\label{sec:nmc-lattice-lemma-tau-derivative}
The functions $\Phi_j:= \partial_\tau^j G(0,0) \in Y$, $j = 0,2$ are given by 
$$
\Phi_0 \equiv -\frac{\a |S|}{N}  \sum_{p \in \scrL_*}|{p} |^{-N-\alpha}   \qquad \text{on $S$} 
$$
and 
$$
\Phi_2(\theta) = a_1 \sum_{p \in \scrL_*}|{p}|^{-N-\alpha-2} 
    - a_2 \sum_{p \in \scrL_*} (p\cdot \th )^2 |{p} |^{-N-\alpha-4} \qquad \text{for $\th \in S$},
$$
where  
\begin{equation}
  \label{eq:def-a-i}
a_1 =\a  \frac{(N+\alpha )(N-\alpha)}{N(N+2)} |S| \qquad \text{and}\qquad a_2:= \a\frac{  (N+\alpha ) (N+\alpha+2 )  }{N} {|S|} . 
\end{equation}
\end{lemma}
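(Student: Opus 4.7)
The plan is to compute both $\Phi_0 = G(0,0)$ and $\Phi_2 = \partial_\tau^2 G(0,0)$ term by term in $p \in \scrL_*$, exploiting the fact that the integrand defining $G_p$ is smooth in $\tau$ near $0$ when $\phi = 0$. Indeed, for $|\tau| < c_0/4$, $y \in \overline B$ and $\theta \in S$ one has $|\tau(y - \theta) + p| \geq |p| - 2|\tau| \geq c_0/2$, so each $G_p(\cdot, 0)$ is $C^\infty$ on $(-c_0/4, c_0/4)$, with derivatives obtained by differentiating under the $y$-integral. The resulting bounds $|\partial_\tau^j G_p(0,0)(\theta)| \leq C_j\, |p|^{-N-\alpha-j}$ are summable in $p$, which both legitimizes the termwise differentiation $\partial_\tau^j G(0,0) = \sum_{p \in \scrL_*}\partial_\tau^j G_p(0,0)$ and is consistent with the $C^\infty$-regularity of $G$ from Proposition~\ref{sec:nmc-lattice-prop-Gk}.

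First I would handle $\Phi_0$. Since $B_0 = B$ and $F_0(\theta) = \theta$, the integrand at $\tau = 0$ collapses to the constant $|p|^{-N-\alpha}$, so $G_p(0,0) \equiv -\alpha\, |B|\, |p|^{-N-\alpha}$. Using $|B| = |S|/N$ and summing in $p \in \scrL_*$ yields the claimed formula for $\Phi_0$; it is constant on $S$, hence even and in $Y$.

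Next I would compute $\Phi_2$. Writing $w := y - \theta$ and expanding
\[
|\tau w + p|^{-N-\alpha} = \bigl(|p|^2 + 2\tau\, w \cdot p + \tau^2 |w|^2\bigr)^{-(N+\alpha)/2}
\]
as a power series in $\tau$, two direct differentiations produce
\[
\partial_\tau^2 |\tau w + p|^{-N-\alpha}\Big|_{\tau=0} = -(N+\alpha)|w|^2\, |p|^{-N-\alpha-2} + (N+\alpha)(N+\alpha+2)(w\cdot p)^2\, |p|^{-N-\alpha-4}.
\]
Multiplying by $-\alpha$ and integrating in $y \in B$, the whole matter reduces to evaluating $\int_B |y - \theta|^2\, dy$ and $\int_B ((y-\theta)\cdot p)^2\, dy$. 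The symmetry identities $\int_B y\, dy = 0$ and $\int_B y_i y_j\, dy = \frac{|S|}{N(N+2)}\delta_{ij}$ give these in closed form:
\[
\int_B |y-\theta|^2\, dy = \frac{|S|}{N+2} + \frac{|S|}{N}, \qquad \int_B ((y-\theta)\cdot p)^2\, dy = \frac{|p|^2 |S|}{N(N+2)} + \frac{(\theta\cdot p)^2 |S|}{N}.
\]
Collecting the two contributions to the $|p|^{-N-\alpha-2}$ coefficient (one from $|w|^2$, one from the $|p|^2$-part of $((y-\theta)\cdot p)^2$), the algebraic simplification $2(N+1) - (N+\alpha+2) = N - \alpha$ produces exactly $a_1$. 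The remaining term yields $-a_2 (\theta\cdot p)^2 |p|^{-N-\alpha-4}$. Summing in $p \in \scrL_*$, which converges absolutely by the estimate $|p|^{-N-\alpha-j} \leq c_0^{-j}|p|^{-N-\alpha}$, produces the stated formula for $\Phi_2$. Evenness is immediate since $(\theta\cdot p)^2$ is invariant under $\theta \mapsto -\theta$, so $\Phi_2 \in Y$.

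I do not anticipate any serious obstacle: the analytic ingredients (differentiation under the integral and under the $p$-sum) are routine thanks to the separating distance $c_0$, and the arithmetic of collecting the coefficients is short. The only point requiring care is the bookkeeping in the $|p|^{-N-\alpha-2}$ coefficient, where two distinct contributions have to be combined via the identity $2(N+1) - (N+\alpha+2) = N-\alpha$ to obtain $a_1$.
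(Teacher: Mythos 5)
Your proposal is correct and follows essentially the same route as the paper's proof: differentiate $G_p(\tau,0)(\th)=-\alpha\int_B|\tau(y-\th)+p|^{-N-\alpha}\,dy$ twice in $\tau$ under the integral, evaluate at $\tau=0$ using $\int_B y\,dy=0$, $\int_B y_iy_j\,dy=\frac{|S|}{N(N+2)}\delta_{ij}$ and $|B|=|S|/N$, collect the $|p|^{-N-\alpha-2}$ contributions via $N+(N+2)-(N+\alpha+2)=N-\alpha$, and sum over $p\in\scrL_*$ (the paper justifies the termwise differentiation through Proposition~\ref{sec:nmc-lattice-prop-Gk}, while you verify the uniform summable bounds directly, which is equivalent). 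The computations and resulting constants $a_1$, $a_2$ match the paper exactly.
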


\begin{proof}
 Let $p \in \scrL_*$ and $\th\in S$ be fixed. We then have 
$$
G_p(\tau,0)(\theta) = -\a \g_p(\t) \qquad  \text{for $\t \in (-c_0/4,c_0/4)$ with}\quad \g_p(\t)= 
\int_B|\tau(y-\th)+ p |^{-N-\alpha} dy,
$$
where $B$ is the unit ball of $\R^N$.
We first note that 
 $$
 \g_p(0)=| {p} |^{-N-\alpha}|B|= | {p} |^{-N-\alpha} \frac{|S|}{N} .
 $$
Moreover, for $\t \in (-c_0/4,c_0/4)$ we have 
$$
\g_p'(\t)=-(N+\alpha ) \int_B (y-\th)\cdot (\tau(y-\th)+ {p})|\tau(y-\th)+ {p} |^{-N-\alpha-2} dy
$$
and 
\begin{align*}
\g_p''(\t)&=-(N+\alpha ) \int_B |y-\th|^2  |\tau(y-\th)+ {p} |^{-N-\alpha-2} dy\\
&+(N+\alpha ) (N+\alpha+2)\int_B \left\{(y-\th)\cdot (\tau(y-\th)+{p})\right\}^2 |\tau(y-\th)+ {p} |^{-N-\alpha-4} dy. 
\end{align*}

Consequently, using the fact that odd terms do not contribute to the integral over $B$,
and recalling that $\int_B y_i^2 dy= N^{-1}\int_B |y|^2 dy$ and that $ \int_B |y|^2 dy=|S|/(N+2)$, we find that  
\begin{align*}
\g_p''(0)&= - (N+\alpha )| {p} |^{-N-\alpha-2}\int_B(|y|^2+1) dy\\
& \hspace{1cm}+   (N+\alpha )(N+\alpha +2)|  {p} |^{-N-\alpha-4}\int_B(( {p} \cdot \th )^2+ ( {p} \cdot y)^2) dy\\
&=  \frac{  (N+\alpha ) (N+\alpha+2 )  }{N} {|S|}  |{p} |^{-N-\a-4}  ({p} \cdot \th)^2\\
&\hspace{1cm}-(N+\alpha )|S| \Bigl( \frac{1}{N+2} +\frac{1}{N} - \frac{(N+\alpha+2)}{N(N+2)}\Bigr) |{p} |^{-N-\a-2}\\
&=  \frac{  (N+\alpha ) (N+\alpha+2 )  }{N} {|S|}  |{p} |^{-N-\a-4}  ({p} \cdot \th)^2 - \frac{(N+\alpha )(N-\alpha)}{N(N+2)} |S| |{p} |^{-N-\a-2}\\
&= \frac{a_2}{\a} |{p} |^{-N-\a-4}  ({p} \cdot \th)^2 - \frac{a_1}{\a} |{p} |^{-N-\a-2} ,
\end{align*} 
with $a_1,a_2$ defined in (\ref{eq:def-a-i}). We thus conclude that 
$$
\Phi_0 (\theta)= -  \a \sum_{p \in \scrL_*} \g_p(0) = -\frac{\a |S|}{N}  \sum_{p \in \scrL_*}|{p} |^{-N-\alpha}
$$
and 
$$
\Phi_2(\theta)= a_1 \sum_{p \in \scrL_*}|{p} |^{-N-\alpha-2} 
    - a_2 \sum_{p \in \scrL_*} (p\cdot \th )^2 |{p} |^{-N-\alpha-4}
$$
for $\theta \in S$, as claimed. 
 \end{proof}

\begin{lemma}
\label{sec:nmc-lattice-lemma-tau-derivative-h-inverse}
The functions $\Psi_j:= (Dh(0))^{-1} \Phi_j \in X$, $j = 0,2$, are given by 
$$
\Psi_0 \equiv \frac{ |S|}{\lambda_1 N }  \sum_{p \in \scrL_*}\frac{1}{| p|^{N+\alpha}}  \qquad \text{on $S$} 
$$
and 
\begin{align*}
\Psi_2(\theta)&= |S| \Bigl\{\frac{(N+\alpha)(N+\alpha+2) }{N^2(\lambda_2-\lambda_1)}  + 
\frac{2}{\lambda_1} \frac{(N+\a)(N+1)(\alpha +2)}{N^2(N+2)} 
\Bigr\} \sum_{p \in \scrL_*}\frac{1}{|p|^{N+\alpha+2}}\\
& -\frac{|S|(N+\alpha)(N+\alpha+2) }{N(\lambda_2-\lambda_1)} \sum_{p \in \scrL_*} \frac{(p\cdot \th)^2}{|p|^{N+\alpha+4}} \qquad \text{for $\th\in S$.}
\end{align*}
\end{lemma}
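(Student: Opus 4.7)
The plan is to use the spectral structure of $L_\a$ made available by Theorem~\ref{sec:nmc-lattice-prop-h}: since $Dh(0)=\a(L_\a-\l_1)$ and the spherical harmonic of degree $k$ is an eigenfunction of $L_\a$ with eigenvalue $\l_k$, one can invert $Dh(0)$ componentwise along its eigenspaces. Thus, to compute $\Psi_j=(Dh(0))^{-1}\Phi_j$, I would first decompose the right-hand sides $\Phi_0$ and $\Phi_2$ produced by Lemma~\ref{sec:nmc-lattice-lemma-tau-derivative} into their spherical-harmonic components and then divide each component by $\a(\l_k-\l_1)$.

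For $\Psi_0$ this is immediate: $\Phi_0$ is a constant and the constants form the kernel of $L_\a$ (the $\l_0=0$ eigenspace), so $\a(L_\a-\l_1)\Psi_0=\Phi_0$ reduces to $\Psi_0=-\Phi_0/(\a\l_1)$, and inserting the explicit value of $\Phi_0$ yields at once the stated formula.

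For $\Psi_2$, the key observation is that for each $p\in\scrL_*$ the polynomial $q\mapsto (p\cdot q)^2-\tfrac{|p|^2}{N}|q|^2$ is harmonic on $\R^N$, so its restriction to $S$, namely $H_p(\th):=(p\cdot\th)^2-|p|^2/N$, is a spherical harmonic of degree~$2$. Writing $(p\cdot\th)^2=H_p(\th)+|p|^2/N$ in the expression of $\Phi_2$ given by Lemma~\ref{sec:nmc-lattice-lemma-tau-derivative} gives
\[
\Phi_2(\th)=\Bigl(a_1-\tfrac{a_2}{N}\Bigr)\sum_{p\in\scrL_*}|p|^{-N-\a-2}-a_2\sum_{p\in\scrL_*}H_p(\th)\,|p|^{-N-\a-4},
\]
which is a clean split of $\Phi_2$ into a constant piece (eigenvalue $0$) and a series of degree-$2$ spherical harmonics (each of eigenvalue $\l_2$), uniformly convergent by \eqref{eq:inf-cJ} and the decay $|p|^{-N-\a-4}$; the second sum lies in $Y$ because $H_p=H_{-p}$ and $\scrL_*=-\scrL_*$. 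Inverting $\a(L_\a-\l_1)$ eigenspace by eigenspace then yields
\[
\Psi_2(\th)=\frac{a_2/N-a_1}{\a\l_1}\sum_{p\in\scrL_*}|p|^{-N-\a-2}-\frac{a_2}{\a(\l_2-\l_1)}\sum_{p\in\scrL_*}H_p(\th)\,|p|^{-N-\a-4}.
\]

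The remaining work is purely algebraic: substitute the values of $a_1,a_2$ from~\eqref{eq:def-a-i}, re-expand $H_p(\th)=(p\cdot\th)^2-|p|^2/N$ in the second sum to regroup the constant contributions, and collect. The coefficient of $(p\cdot\th)^2|p|^{-N-\a-4}$ becomes $-\tfrac{a_2}{\a(\l_2-\l_1)}=-\tfrac{|S|(N+\a)(N+\a+2)}{N(\l_2-\l_1)}$, matching the claim. The $|p|^{-N-\a-2}$-series receives two contributions: $\tfrac{a_2}{N\a(\l_2-\l_1)}=\tfrac{|S|(N+\a)(N+\a+2)}{N^2(\l_2-\l_1)}$ and $\tfrac{a_2/N-a_1}{\a\l_1}$. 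The decisive simplification is the identity $(N+\a+2)(N+2)-N(N-\a)=2(N+1)(\a+2)$, which reduces $a_2/N-a_1$ to $\tfrac{2\a|S|(N+\a)(N+1)(\a+2)}{N^2(N+2)}$ and hence yields the second summand inside the bracket of the claimed expression. I expect no analytic obstacle; the only hazards are constant-bookkeeping and checking $\Psi_2\in Y$, the latter being automatic from the symmetries just noted.
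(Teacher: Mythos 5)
Your proposal is correct and follows essentially the same route as the paper: the paper likewise inverts $Dh(0)=\a(L_\a-\lambda_1)$ eigenspace by eigenspace, writing $\Phi_2=\sum_{p\in\scrL_*}\bigl(a_1-\tfrac{a_2}{N}-a_2\,q_{p/|p|}\bigr)|p|^{-N-\a-2}$ with the degree-two spherical harmonic $q_e(\th)=(e\cdot\th)^2-\tfrac1N$ (your $H_p$ equals $|p|^2 q_{p/|p|}$), and then carries out the same constant bookkeeping, including the identity $(N+\a+2)(N+2)-N(N-\a)=2(N+1)(\a+2)$. Your verification of $\Psi_0$ and of the evenness/convergence of the series also matches the paper's argument.
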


\begin{proof}
We recall from Theorem~\ref{sec:nmc-lattice-prop-h} that 
$$
(Dh(0))^{-1}= \frac{1}{\a} (L_\a - \lambda_1)^{-1}: Y \to X,
$$
with $L_\a$ given by (\ref{eq:def-L-alpha}). Since $L_\a$ maps constant functions to zero, we find that  
 $$ 
\Psi_0 \equiv \frac{1}{\a} (L_\a - \lambda_1)^{-1}\Bigl(-\frac{\a |S|}{N}  \sum_{p \in \scrL_*}|{p}|^{-N-\alpha}\Bigr) = \frac{|S|}{\lambda_1 N}\sum_{p \in \scrL_*}|{p} |^{-N-\alpha} \qquad \text{on $S$.}
$$

To compute $\Psi_2$, we introduce the functions 
$$
q_e \in C^{1,\beta}(S), \qquad q_e(\th )= (e \cdot \theta )^2-\frac{1}{N}
$$
for $e \in S$. Since $q_e$ is a spherical harmonic of degree two for every $e \in S$, we have 
$$
(Dh(0))^{-1} q_e = \frac{1}{\a} (L_\a - \lambda_1)^{-1} q_e = \frac{1}{\a(\lambda_2-\lambda_1)}q_e \qquad \text{for $e \in S$.}
$$
Moreover, by Lemma~\ref{sec:nmc-lattice-lemma-tau-derivative}, we have 
$$
\Phi_2 =  \sum_{p \in \scrL_*}\Bigl(a_1 -\frac{a_2}{N} -a_2 \: q_{\text{\tiny $\frac{p}{|p|}$}}\Bigr)|p |^{-N-\alpha-2}\qquad \text{in $Y$} 
$$
and thus 
$$
\Psi_2 = \frac{1}{\a} (L_\a - \lambda_1)^{-1}\Phi_2 = -\sum_{p \in \scrL_*}
\Bigl\{ \frac{1}{\a \lambda_1} \Bigl( a_1 -\frac{a_2}{N}\Bigr)  +\frac{ a_2 }{\a(\lambda_2-\lambda_1)}   q_{\text{\tiny $\frac{p}{|p|}$}}\Bigr\} |p|^{-N-\alpha-2} \qquad \text{in $X$,}
$$
i.e., 
\begin{align*}
&\Psi_2(\theta)= \Bigl\{\frac{ a_2 }{\a N(\lambda_2-\lambda_1)}- \frac{1}{\a \lambda_1} \Bigl( a_1 -\frac{a_2}{N}\Bigr) 
\Bigr\} \sum_{p \in \scrL_*}
| p |^{-N-\alpha-2} -\frac{a_2 }{\a(\lambda_2-\lambda_1)} \sum_{p \in \scrL_*} \frac{( p \cdot \th)^2}{ |p|^{N+\alpha+4}}\\
  &= |S| \Bigl\{\frac{(N+\alpha)(N+\alpha+2) }{N^2(\lambda_2-\lambda_1)}  - \frac{1}{\lambda_1} \frac{(N+\a)\{ N(N-\a)-(N+2)(N+\alpha+2)\}}{N^2 (N+2)} 
\Bigr\} \sum_{p \in \scrL_*}\frac{1}{| p |^{N+\alpha+2}}\\
&\hspace{1cm}-\frac{|S|(N+\alpha)(N+\alpha+2) }{N(\lambda_2-\lambda_1)} \sum_{p \in \scrL_*} 
\frac{( p \cdot \th)^2}{| p|^{N+\alpha+4}}\\
&=|S| \Bigl\{\frac{(N+\alpha)(N+\alpha+2) }{N^2(\lambda_2-\lambda_1)}  + \frac{1}{\lambda_1} 
\frac{(N+\a)(2N(\alpha +2) +4+2\a)}{N^2(N+2)} 
\Bigr\} \sum_{p \in \scrL_*}\frac{1}{|p|^{N+\alpha+2}}\\
& \hspace{1cm}-\frac{|S|(N+\alpha)(N+\alpha+2) }{N(\lambda_2-\lambda_1)} \sum_{p \in \scrL_*} 
\frac{( p\cdot \th)^2}{| p|^{N+\alpha+4}} \qquad \text{for $\th \in S$},
\end{align*}
as claimed.
\end{proof}

We may now complete the 

\begin{proof}[Proof of Theorem~\ref{th:main-th-Brav-lat}]
The existence and uniqueness of the curve $r \mapsto \vp_r$ with the properties of Theorem~\ref{th:main-th-Brav-lat}(i)--(iii) follows immediately from Proposition~\ref{sec:nmc-lattice-implicit-function} by setting $\phi_r:= \phi(\frac{1}{r})$. To obtain Theorem~\ref{th:main-th-Brav-lat}(iv), we note that by (\ref{eq:expansion-phi-tau-0}) and Lemma~\ref{sec:nmc-lattice-lemma-tau-derivative-h-inverse} we have the expansion 
\begin{align*}
\phi_r(\th)&=  -r^{-N-\alpha}
\Bigl(\Psi_0(\theta)+  \frac{r^{-2}}{6}\Psi_2(\th) + o(r^{-2})\Bigr)\\
&=r^{-N-\alpha}\Bigl(-\kappa_0  +   r^{-2}\Bigl\{  \kappa_1 \sum_{p \in \scrL_*} \frac{(\th \cdot p)^2}{|p|^{N+\alpha+4}} -\kappa_2\Bigr\}
 +  o(r^{-2}) \Bigr) \quad \text{for $\theta \in S$ as $r \to +\infty$},
\end{align*}
where
\begin{align*}
\kappa_0 &\equiv \Psi_0 =  \frac{ |S|}{\lambda_1 N }  \sum_{p \in \scrL_*}\frac{1}{|{p}|^{N+\alpha}}, \qquad  
\kappa_1 =\frac{|S|(N+\alpha)(N+\alpha+2) }{6 N(\lambda_2-\lambda_1)}  
\end{align*}
and
\begin{align*}
\kappa_2&= \frac{|S|}{6} \Bigl\{\frac{(N+\alpha)(N+\alpha+2) }{N^2(\lambda_2-\lambda_1)}  + 
\frac{2}{\lambda_1} \frac{(N+\a)(N+1)(\alpha +2)}{N^2(N+2)} 
\Bigr\} \sum_{p \in \scrL_*}\frac{1}{| p |^{N+\alpha+2}}.
\end{align*}

To prove Theorem~\ref{th:main-th-Brav-lat}(v), it suffices to show, after making $r_0$ larger if necessary, that 
the map 
$$
\th\mapsto \ti{f}(\th):= \sum_{ p\in \scrL_*} \frac{(\th \cdot p)^2}{|p|^{N+\alpha+4}}
$$
is non-constant on $S$ if $1\leq M\leq N-1$.  We readily observe that  
$\ti{f}(e_1)>0$ and  $ \ti{f}(e_N)=0$. The proof of Theorem~\ref{th:main-th-Brav-lat} is thus finished.

The last statement of the theorem, on uniqueness, is a direct consequence of the implicit function theorem.
\end{proof}

We conclude this section with the 
\begin{proof}[Proof of Corollary~\ref{cor:main-th}]
By assumption and up to a rotation, we may assume that 
 the lattice basis satisfies
$$ 
  \textbf{a}_i=\rho_i e_i \,   \qquad \textrm{  for   $i=1,\dots,M$},
$$
for some $\rho_i\in\R \setminus \{0\}$. It is convenient to define the map  
\be \label{eq:defcJ}
\cJ: \Z^M\to \scrL,\qquad  \cJ(k):= \sum_{i=1}^M k_i \textbf{a}_i =  (\rho_1k_1,\dots, \rho_M k_M,0,\dots,0) \quad \in \R^N.
\ee
Then we get  
 \begin{align*}
\sum_{p \in \scrL_*} \frac{(\th \cdot p)^2}{|p|^{N+\alpha+4}}=\sum_{k \in \Z^M_*} \frac{(\th \cdot \cJ(k))^2}{|\cJ(k)|^{N+\alpha+4}}&=
\sum_{k \in \Z^M_*} \frac{(\th_1\rho_1 k_1+ \dots +\th_M \rho_M k_M)^2}{|\cJ(k)|^{N+\alpha+4}}\\
&=\sum_{i,j=1}^M  \sum_{k \in \Z^M_*}  \frac{\th_i \th_j \rho_i  \rho_j k_i k_j}{|\cJ(k)|^{N+\alpha+4}}, 
\end{align*}
whereas for $i \not = j$ we have 
$$
\sum_{k \in \Z^M_*}  \frac{\th_i \th_j \rho_i  \rho_j k_i k_j}{|\cJ(k)|^{N+\alpha+4}} = 0 
$$
by oddness with respect to the reflection of $k$ at the axis $\{k_i=0\}$.  Hence we conclude that 
$$
\sum_{p \in \scrL_*} \frac{(\th \cdot p)^2}{|p|^{N+\alpha+4}}= \sum_{k\in\Z^M_*}  \frac{\th_1^2\rho_1^2 k_1^2+ \dots + \th_M^2\rho_M^2 k_M^2}{|\cJ(k)|^{N+\alpha + 4}}=
\sum_{i=1}^M \mu_i  \th_i^2
$$
with 
$$
\mu_i =    \sum_{k\in\Z^M_*}  \frac{ \rho_i^2 k_i^2 }{|\cJ(k)|^{N+\alpha + 4}}=   \sum_{p\in\scrL_*}  \frac{p_i^2 }{|p|^{N+\alpha + 4}}.
$$ 
Together with Theorem~\ref{th:main-th-Brav-lat}, this gives \eqref{eq:expan-orthog-lattice}.  To see \eqref{eq:expan-square-lattice}, we note that in the case of the square lattice we have $\rho_1=\rho_2= \dots = \rho_M$ and thus 
 $$
 \mu_i= \frac{1}{M}\sum_{j=1}^M \mu_j = \frac{1}{M}  \sum_{p\in\Z^M_*}  \frac{1}{|p|^{N+\alpha + 2}} \qquad \textrm{ for  $i=1,\dots,M$.}
 $$
 This ends  the proof of Corollary~\ref{cor:main-th}.
\end{proof}

\section{Regularity of the NMC operator over the unit sphere}\label{s:reg-NMC}

In this section we prove the smoothness of the nonlocal mean
curvature $h$ as asserted in Theorem~\ref{sec:nmc-lattice-prop-h-0}.

\subsection{Geometric preliminaries}
\label{sec:geom-prel}
For $\phi \in \cO$, we recall  the  parameterization $F_\vp: S\to S_{\vp}$ of $S_\vp$, defined in \eqref{eq:def-F_vp} by $F_\vp(\s)=(1+\vp(\s)) \s$. We shall need the following observation.
\begin{proposition}\label{prop:geom-pert-sphere}
Let $ \phi \in C^{1}({S}  )$ be  such that  $\|\phi\|_{L^{\infty}(S)}<1$. Then the unit outer normal (to the set enclosed by $S_\vp$) of $ S_\vp$ at a point $F_\vp(\sigma)$, $\sigma \in S$ is given by
$$
\nu_{S_\vp}(F_\vp(\s)) = \frac{(1+\phi(\s))\s  -\n \vp(\s)}{\sqrt{(1+\phi(\s))^2 + |\n\vp(\s)|^2}}.
$$
Moreover, for every continuous function $f$ on $\R^N$, we have
\begin{equation}
  \label{eq:transformation-rule}
\int_{S_\vp} f(y) \,dV_{S_\vp}(y)=  \int_{S} f\circ  F_\vp(\s) J_\vp(\s)  \, dV(\s) \qquad \text{with}\quad  J_\vp=   (1+ \vp)^{N-2}\sqrt{ (1+\vp)^{2}+  |\n \vp|^2}.
\end{equation}
Here and in the following, $\nabla \vp$ denotes the gradient vector field of $\vp$ on $S$.
\end{proposition}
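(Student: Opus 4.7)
The plan is to work directly with the parameterization $F_\vp: S \to S_\vp$ given by $F_\vp(\s)=(1+\vp(\s))\s$, compute its differential, and read off both the unit outer normal and the surface area element. First I would pick, at an arbitrary $\s \in S$, an orthonormal basis $\{e_1,\dots,e_{N-1}\}$ of the tangent space $T_\s S$, so that the tangential gradient decomposes as $\n\vp(\s)=\sum_{i=1}^{N-1}(\partial_i \vp)(\s)\, e_i$ with $\partial_i\vp = e_i\cdot \n\vp$. Since $\s \perp T_\s S$, differentiating $F_\vp$ along any $v \in T_\s S$ gives the basic identity
$$
dF_\vp(\s)[v] = (v\cdot \n\vp(\s))\,\s + (1+\vp(\s))\,v,
$$
on which everything else rests.

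For the normal formula, I would take the candidate $n := (1+\vp)\s - \n\vp$ and check orthogonality with the image of $dF_\vp(\s)$. Using $v\cdot\s = 0$ and $\n\vp\cdot\s = 0$ (the tangential gradient is perpendicular to $\s$), a direct computation yields
$$
n \cdot dF_\vp(\s)[v] = (1+\vp)(v\cdot\n\vp) - (1+\vp)(v\cdot\n\vp) = 0
$$
for every $v \in T_\s S$. The hypothesis $\|\vp\|_\infty < 1$ gives $n\cdot\s = 1+\vp(\s) > 0$, so $n$ points outward from $B_\vp$. Finally, the orthogonality $\s\perp \n\vp$ yields $|n|^2 = (1+\vp)^2 + |\n\vp|^2$, and normalizing produces the claimed expression for $\nu_{S_\vp}(F_\vp(\s))$.

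For the area element, I would compute the first fundamental form of the pullback metric $F_\vp^* g_{\R^N}$ on $S$. In the orthonormal frame above, the identity for $dF_\vp$ gives
$$
g_{ij}(\s) = dF_\vp(\s)[e_i]\cdot dF_\vp(\s)[e_j] = (1+\vp)^2 \delta_{ij} + (\partial_i\vp)(\partial_j\vp),
$$
i.e.\ the $(N-1)\times(N-1)$ matrix $g = (1+\vp)^2 I_{N-1} + \n\vp \otimes \n\vp$. The matrix determinant lemma then gives
$$
\det g = (1+\vp)^{2(N-1)} + (1+\vp)^{2(N-2)}|\n\vp|^2 = (1+\vp)^{2(N-2)}\bigl((1+\vp)^2 + |\n\vp|^2\bigr),
$$
so $\sqrt{\det g} = J_\vp$, and \eqref{eq:transformation-rule} is the standard change-of-variables formula for the surface measure.

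I do not foresee any real obstacle: the only point that needs attention is that $\n\vp(\s)\in T_\s S$, so the inner products $\n\vp\cdot\s$ vanish, which is precisely what causes the cross-terms to collapse in both the orthogonality check for $n$ and in the reduction of $\det g$ via the matrix determinant lemma. The argument requires only $\vp\in C^1(S)$ with $\|\vp\|_\infty<1$, matching the stated hypotheses.
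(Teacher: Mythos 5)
Your proof is correct and follows essentially the same route as the paper: differentiate the parameterization $F_\vp$, use $\s\cdot\n\vp(\s)=0$ and $\s\cdot v=0$ to identify and normalize the outer normal, and compute $\det g$ for the rank-one perturbation $(1+\vp)^2 I+\n\vp\otimes\n\vp$ in an orthonormal frame (the paper does the same determinant computation via the single nonzero eigenvalue rather than the matrix determinant lemma). No gaps.
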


\begin{proof}
We fix a local parametrization $ z\mapsto \s(z)$ of $S$, which gives rise to the local parameterization 
$z \to \widehat{F}(z)= (1+\vp(\s(z))) \s(z)$ of $S_\vp$.
The tangent vectors of $S_\vp$ at the point $\widehat{F}(z)$ are given by 
\be\label{e:Zi} 
Z_i(z):=\de_{z_i} \widehat{F}(z)= (1+ \vp(\s(z)))\,  \de_{z_i}\s(z)+ \de_{z_i} (\vp \circ \sigma)(z)\, \s(z) 
\ee 
with $\de_{z_i} (\vp \circ \sigma)(z) =\nabla \vp(\s(z)) \cdot  \de_{z_i}\s(z)$ for $i=1,\dots,N$. 
Since $\s\cdot \n\vp(\s)=0$ and $\sigma\cdot\de_{z_i}\sigma=0$ (which follows from $|\sigma|^2=1$), 
we thus conclude that the unit outer normal of $ S_\vp$ at a point $F_\vp(\sigma)$ with $\sigma= \sigma(z) \in S$ is given by 
 $$
\nu_{S_\vp}(F_\vp(\s))= \frac{(1+\phi(\s))\s  -\n \vp(\s)}{\sqrt{(1+\phi(\s))^2 + |\n\vp(\s)|^2}}.
 $$
 
We now turn to the proof of (\ref{eq:transformation-rule}). 
By the previous relations,
the first fundamental form of $ S_\vp$ is given by 
\be  \label{eq:def-ti-gij-s-vp} 
g_{ij}=Z_i \cdot Z_j = (1+  \vp \circ \s )^2 \de_{z_i}\s\cdot \de_{z_j}\s+ \de_{z_i} (\vp \circ \sigma) \de_{z_j} (\vp \circ \sigma) .
\ee
We now compute $\sqrt{\det(g)} (z)$ at a given point $z$ under the assumption that $ \de_{z_i}\s(z)\cdot \de_{z_j}\s(z)=\d_{ij}$. We then have that 
$$
(1+ \vp(\sigma(z)))^{-2(N-1)}\det(g)(z) =  \det(\id+ C)
$$
with the matrix $C=(C_{ij})_{ij}$ given by 
$$
C_{ij}=(1+ \vp(\sigma(z)))^{-2} \de_{z_i} (\vp \circ \sigma)(z) \de_{z_j} (\vp \circ \sigma)(z).
$$
Note that $C$ has only one non-zero eigenvalue given by $(1+ \vp(\s(z)))^{-2}|\n \vp(\s(z)) |^2$ with corresponding 
eigenvector 
$(\de_{z_i}(\vp\circ\sigma)(z))_i$. We thus have 
$$
(1+ \vp(\sigma(z)))^{-2(N-1)}\det(g)(z) = \det(\id+ C)
=1+ (1+ \vp(\s(z)))^{-2}|\n \vp(\s(z))  |^2, 
$$
and hence 
\begin{align*}
\sqrt{\det(g)}(z)    = (1+ \vp(\s(z)))^{N-2}\sqrt{ (1+\vp(\s(z)))^{2}+  |\n \vp(\s(z))|^2    }.
\end{align*}
We have thus computed the local change of the volume form when passing from $S_\vp$ to $S$, and this gives rise to the transformation rule (\ref{eq:transformation-rule}).
 \end{proof}

\subsection{Preliminary differential calculus formulas}\label{ss:prem-not}

For a finite set $\cN$, we let $|\cN| $ denote the number of elements of $\cN$. Moreover, we denote 
$\cN_\ell:=  \{1,\dots, \ell\}$ for $\ell\in \N$.
Let $Z$ be a  Banach space and $U$ a nonempty open subset of $Z$. If $T \in \cC^{\ell}(U,\R)$ and $u \in U$, 
then $D^\ell T(u)$ is a continuous 
symmetric $\ell$-linear form on $Z$ whose norm is given by  
$$
   \|D^{\ell}T ({u}) \|= \sup_{{u}_{1}, \dots,  {u}_{\ell}\in Z }
     \frac{\left|D^{\ell} T ({u})[u_1,\dots,u_\ell] \right|}{   \prod_{j=1}^\ell \|   {u}_{j} \|_{ Z }}    .
$$
If  $T_1,\, T_2 \in \cC^\ell(U,\R)$,  then also $T_1 T_2 \in \cC^\ell(U,\R)$, and the $\ell$-th derivative of $T_1 T_2$ at $u$ is given by 
\be \label{eq:Dk-T1T2}
D^\ell(T_1 T_2 )({u})[u_1,\dots, u_\ell]= \sum_{\cN \in   \scrS_\ell} D^{|\cN|} T_1({u})[u_n]_{n\in \cN} \,  
D^{\ell-|\cN|} T_2({u}) [u_n]_{n\in \cN^c} ,
\ee
where  $\scrS_\ell $ is the set of subsets of $\{1,\dots, \ell\} $ and  $ \cN^c= \{1,\dots, \ell\}\setminus \cN $ for $\cN \in  \scrS_\ell$. 

 If, in particular, $L: Z\to \R$ is a linear map and $|\cN|\geq 1$, we have 
\be \label{eq:Dk-LT2}
D^{|\cN|}(L T_2 )({u})[u_i]_{i\in \cN}= L({u})    D^{|\cN|} T_2({u})[u_i]_{i\in\cN } +  \sum_{j\in \cN} L({u}_j)   D^{|\cN|-1} T_2({u})     [u_i]_{{i \in \cN\setminus\{j\}}}.
\ee
Furthermore,  let $B:Z\times Z\mapsto \R$ be a   bilinear map and let  $Q:Z \mapsto \R $ be its associated    quadratic form (namely $Q(\vp)=B(\vp,\vp)$). Then   
\begin{align} \label{eq:Dk-QT2}
D^{|\cN|}(Q T_2 )(u)[u_i ]_{i\in \cN}=& B(u,u)    D^{|\cN|} T_2(u)[u_i]_{i\in\cN} \nonumber\\
&+  \sum_{j\in \cN} (B(u, u_j)+B(u_j,u))   D^{|\cN|-1} T_2(u) [u_i]_{{i \in \cN\setminus\{j\}}}  \nonumber\\
&+\sum_{i,j\in \cN}  (B(u_i,u_j)+B(u_j,u_i))  D^{|\cN|-2} T_2(u) [u_r]_{{r \in \cN\setminus\{i,j\}}}  .
\end{align}
We close this section by the well known  \textit{Fa\'{a} de Bruno formula}, see e.g. \cite{FaadeBruno-JW}. We let $T$ be as above and $g:\textrm{Im}(T)\to \R$ be a  $k$-times differentiable map.  The Fa\'{a} de Bruno formula states that 
\be 
\label{eq:Faa-de-Bruno}
D^k( g\circ T)(u)[u_1,\dots,u_k]= \sum_{\Pi\in\scrP_k} g^{ (\left|\Pi\right|)}(T(u)) \prod_{P\in\Pi} D^{\left|P\right| }T(u)[u_j]_{j \in P} ,
 \ee
for $u, u_1,\dots,u_k  \in U$, where $\scrP_k$ denotes the set of all partitions of  $\left\{ 1,\dots ,k \right\}$.

\subsection{Regularity of the nonlocal mean curvature operator over the sphere}

For every  $a,b\in S$, $b \not=-a$ 
we consider the regular curve 
\be 
\label{eq:def-gamma}
\g_{a,b}:[0,1] \to S, \qquad \g_{a,b}(t)=\frac{t a+(1-t)b}{| t a+(1-t)b |} ,
\ee
which clearly satisfies $\g_{a,b}(0)=b$ and $\g_{a,b}(1)=a$.

\begin{lemma}\label{lem:curve-gamma}
Consider the compact subset 
$$
S_*:= \{(a,b) \in S \times S\::\: |a-b| \le 1\} \subset S \times S.
$$
Then, there exists a constant $C>0$ depending only on $N$ with the property that for $(a,b),$ $(a_1,b_1),$ $(a_2,b_2) \in S_*$ and $t \in [0,1]$ we have  
\begin{align}
\label{eq:gamma-dot-bound}  
| \dot{\g}_{a,b}(t)|&\leq C |a-b|, \\
|\gamma_{a_1,b_1}(t)-\gamma_{a_2,b_2}(t)| &\le C\Bigl(|a_1-a_2|+|b_1-b_2|\Bigr) \quad \text{and}\label{eq:gamma-dot-bound-1}\\
|\dot \gamma_{a_1,b_1}(t)- \dot \gamma_{a_2,b_2}(t)| &\le C\Bigl(|a_1-a_2|+|b_1-b_2|\Bigr).\label{eq:gamma-dot-bound-1-1}   
\end{align}
\end{lemma}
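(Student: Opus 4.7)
My plan is to reduce all three inequalities to the elementary observation that on $S_*$ the denominator $|ta+(1-t)b|$ stays uniformly bounded away from zero. Once this is known, the map $u \mapsto u/|u|$ and its various derivatives are smooth with bounded Lipschitz constants on the relevant region, and the three estimates follow by direct computation and straightforward size bounds. Throughout, I would write $u(t) := ta + (1-t)b$ and $u_i(t) := ta_i + (1-t)b_i$.

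First I would establish the uniform lower bound: for $(a,b) \in S_*$ and $t \in [0,1]$,
\[
|ta + (1-t)b|^2 = t^2 + (1-t)^2 + 2t(1-t)\,a\cdot b = 1 - t(1-t)|a-b|^2 \ge 1 - \tfrac{1}{4}|a-b|^2 \ge \tfrac{3}{4},
\]
so $|u(t)| \ge \sqrt{3}/2$. Note also the trivial upper bound $|u(t)| \le 1$.

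For \eqref{eq:gamma-dot-bound}, I would compute directly
\[
\dot{\gamma}_{a,b}(t) = \frac{a-b}{|u(t)|} - \frac{(u(t)\cdot (a-b))\,u(t)}{|u(t)|^3},
\]
and bound each term by a constant multiple of $|a-b|$ using the lower bound on $|u(t)|$ together with the Cauchy--Schwarz estimate $|u(t) \cdot (a-b)| \le |u(t)|\,|a-b|$. For \eqref{eq:gamma-dot-bound-1}, I would write
\[
\gamma_{a_1,b_1}(t) - \gamma_{a_2,b_2}(t) = \frac{u_1(t) - u_2(t)}{|u_1(t)|} + u_2(t)\left(\frac{1}{|u_1(t)|} - \frac{1}{|u_2(t)|}\right),
\]
then use $\bigl||u_1(t)|-|u_2(t)|\bigr| \le |u_1(t) - u_2(t)|$ together with the uniform upper and lower bounds on $|u_i(t)|$ to bound the right-hand side by a constant multiple of $|u_1(t) - u_2(t)| \le |a_1-a_2| + |b_1-b_2|$.

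For \eqref{eq:gamma-dot-bound-1-1}, I would combine the explicit formula for $\dot{\gamma}_{a_i,b_i}(t)$ derived in the proof of (i) with the same telescoping trick, inserting and subtracting intermediate expressions to isolate the differences $u_1(t)-u_2(t)$ and $(a_1-b_1)-(a_2-b_2)$, both of which are controlled by $|a_1-a_2|+|b_1-b_2|$. The main (still routine) obstacle is the rational structure with $|u_i(t)|^3$ in the denominator: one must add and subtract a handful of intermediate fractions to isolate the differences cleanly, but every denominator remains uniformly bounded below by $(\sqrt{3}/2)^3$, so each piece is absorbed into the universal constant $C$. No tools beyond the Cauchy--Schwarz inequality and the triangle inequality are needed.
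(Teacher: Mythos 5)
Your proposal is correct, and its backbone is the same as the paper's: the identity $|ta+(1-t)b|^2=1-t(1-t)|a-b|^2\ge 3/4$ on $S_*$ and the explicit formula for $\dot\gamma_{a,b}$ (your expression $\frac{a-b}{|u|}-\frac{(u\cdot(a-b))u}{|u|^3}$ is exactly the paper's formula, since $u\cdot(a-b)=(2t-1)|a-b|^2/2$), from which \eqref{eq:gamma-dot-bound} follows just as you argue. The only divergence is in \eqref{eq:gamma-dot-bound-1} and \eqref{eq:gamma-dot-bound-1-1}: the paper avoids the telescoping computation entirely by observing that, thanks to the lower bound on the denominator, $(t,a,b)\mapsto\gamma_{a,b}(t)$ and $(t,a,b)\mapsto\dot\gamma_{a,b}(t)$ extend to $C^1$ functions on an open neighborhood of the compact set $[0,1]\times S_*$, hence are Lipschitz there by a standard argument; you instead prove the Lipschitz bounds by hand, inserting intermediate fractions and using $|u_1(t)-u_2(t)|\le|a_1-a_2|+|b_1-b_2|$, $\bigl||u_1|-|u_2|\bigr|\le|u_1-u_2|$ and the uniform bounds $\sqrt{3}/2\le|u_i|\le 1$. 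Both routes are valid and yield a constant depending only on $N$; yours is more explicit and quantitative, the paper's is shorter and less computational. Your sketch of \eqref{eq:gamma-dot-bound-1-1} leaves the bookkeeping to the reader, but every piece you would need (differences of $u_i$, of $a_i-b_i$, and of $|u_i|^{-1}$, $|u_i|^{-3}$) is indeed controlled as you say, so there is no gap.
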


\begin{proof}
For $t\in[0,1]$,  consider the function
\begin{equation}
\label{eq:def-Upsilont}
\Upsilon:  \R \times \R^{N} \times \R^N \to \R, \qquad \Upsilon(t,a,b) = |b+ t(a-b)|
\end{equation}
Since $|a-b| \le 1$ on $S_*$ and $t(1-t) \in [0,\frac{1}{4}]$ for $t \in [0,1]$, we see that 
\be \label{eq:up-low-boun-Upsilon-t}
\frac{\sqrt{3}}{2} \:\le\: \Upsilon(t,a,b)= \sqrt{1- t(1-t)| a-b |^2}\: \le\: 1 \qquad \textrm{ for $(t,a,b) \in [0,1] \times S_*$.}
\ee
By direct computations, we also see that
\begin{equation}
\label{eq:formular-derv-gamma}
  \dot{\g}_{a,b}(t)= \frac{a-b}{\Upsilon(t,a,b)}+\frac{(1-2t)|a-b|^2}{2}\: \frac{b+t( a-b) }{\Upsilon(t,a,b)^3}
\qquad \text{for $(t,a,b) \in [0,1] \times S_*$.}
\end{equation}
Hence \eqref{eq:up-low-boun-Upsilon-t} yields (\ref{eq:gamma-dot-bound}) with a suitable constant $C>0$.

Next we consider the function  
$$
\cV: \R \times \R^N \times \R^N \to \R^N,\qquad \cV(t,a,b):=\frac{(1-2t)|a-b|^2}{2}\,  \bigl(b+t(a-b)\bigr).
$$
Then we may write
$$
\gamma_{a,b}(t)=\frac{ t a+(1-t)b }{\Upsilon(t,a,b) }\quad \textrm{and}\quad \dot \gamma_{a,b}(t)=   \frac{a-b}{\Upsilon(t,a,b)}+\frac{\cV(t,a,b) }{\Upsilon(t,a,b)^3}\qquad \text{for $(t,a,b) \in [0,1] \times S_*$.} 
$$
By \eqref{eq:up-low-boun-Upsilon-t}, we see that the right hand sides of these equalities define $C^1$-functions in an open neighborhood of the compact set $[0,1] \times S_*$ in $\R^{2N+1}= \R \times \R^N \times \R^N$. Therefore, a standard argument shows that these functions are Lipschitz continuous on 
$[0,1] \times S_*$ with respect to the Euclidean distance of $\R^{2N+1}$, and from this (\ref{eq:gamma-dot-bound-1}) and (\ref{eq:gamma-dot-bound-1-1}) follow.
\end{proof}

%
%
%
 
The following is an expression for $h$, as defined in (\ref{eq:def-h}), 
where we remove the dependence on $\vp$ in the domain of integration.
\begin{proposition}\label{prop:new-express-nmc}
Let $\vp\in \cO$. Then, we have 
  \begin{align*} 
 {h}(\vp)(\th)=&-(1+\vp(\th)) \int_{{S}}\frac{  \vp(\th)-\vp(\s)   -  (\th-\s) \cdot\n\vp(\s) }{ |\th-\s|^{N+\alpha}}  (1+\vp(\s))^{N-2}\,  \cK_{\a}(\vp,\s,\th)       \,  dV(\s) \nonumber\\
&+ \int_{{S}}\frac{(\vp(\th)-\vp(\s)) ^2  }{ |\th-\s|^{N+\alpha}} (1+ \vp(\s))^{N-2}\,  \cK_{\a}(\vp,\s,\th)       \,  dV(\s)\\ 
&+\frac{1+\vp(\th )}{2}  \int_{{S}}\frac{1 }{  |\th-\s|^{N+\alpha-2}}      (1+\vp(\s))^{N-1 }      \,   \cK_{\a}(\vp,\s,\th)   \,dV(\s),
\end{align*}
where  $\cK_\a: \cO\times S\times S$ is given by
\begin{align*}
\cK_\a(\vp,\s,\th):=\frac{ 1}{\left( \frac{(\vp(\th)-\vp(\s))^2}{|\th-\s|^2} + (1+\vp(\s)) (1+\vp(\th))\right)^{(N+\a)/2}}.
\end{align*}
Moreover, all integrals above converge absolutely.

\end{proposition}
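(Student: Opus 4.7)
The plan is to start from the definition \eqref{eq:def-h} of $h(\varphi)$, apply the parametrization $F_\varphi:S\to S_\varphi$ to rewrite the integral over $S$, and then perform an algebraic reorganization of the integrand into the three displayed summands. More precisely, by Proposition \ref{prop:geom-pert-sphere}, under the substitution $y=F_\varphi(\sigma)$ the factor $\sqrt{(1+\varphi(\sigma))^2+|\nabla\varphi(\sigma)|^2}$ appearing in $\nu_{S_\varphi}\circ F_\varphi$ cancels against one of the same factor in $J_\varphi$, so the pulled-back integrand carries only the weight $(1+\varphi(\sigma))^{N-2}$ multiplying the vector $(1+\varphi(\sigma))\sigma-\nabla\varphi(\sigma)$ in the numerator.

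Next, I would compute the denominator. Writing $u=1+\varphi(\sigma)$, $v=1+\varphi(\theta)$ and using $|\sigma|=|\theta|=1$ together with $2\sigma\cdot\theta=2-|\sigma-\theta|^2$, a direct expansion gives
\begin{equation*}
|F_\varphi(\sigma)-F_\varphi(\theta)|^{2}
= (\varphi(\sigma)-\varphi(\theta))^{2} + u\,v\,|\sigma-\theta|^{2}
= |\sigma-\theta|^{2}\Bigl(\tfrac{(\varphi(\sigma)-\varphi(\theta))^2}{|\sigma-\theta|^2}+uv\Bigr),
\end{equation*}
so that $|F_\varphi(\sigma)-F_\varphi(\theta)|^{-(N+\alpha)} = |\sigma-\theta|^{-(N+\alpha)}\,\mathcal{K}_\alpha(\varphi,\sigma,\theta)$. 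For the numerator I would use the tangency identity $\sigma\cdot\nabla\varphi(\sigma)=0$, which reduces
\begin{equation*}
\bigl(F_\varphi(\sigma)-F_\varphi(\theta)\bigr)\cdot\bigl(u\sigma-\nabla\varphi(\sigma)\bigr)
= u(\varphi(\sigma)-\varphi(\theta))+ v(\theta-\sigma)\cdot\nabla\varphi(\sigma)+ \tfrac{uv}{2}|\sigma-\theta|^{2}.
\end{equation*}
Writing $u=v+(\varphi(\sigma)-\varphi(\theta))$ in the first term and regrouping yields the key decomposition
\begin{equation*}
-v\bigl[\varphi(\theta)-\varphi(\sigma)-(\theta-\sigma)\cdot\nabla\varphi(\sigma)\bigr]+(\varphi(\theta)-\varphi(\sigma))^{2}+\tfrac{uv}{2}|\sigma-\theta|^{2},
\end{equation*}
which, after multiplying by $u^{N-2}\mathcal{K}_\alpha/|\sigma-\theta|^{N+\alpha}$, produces exactly the three integrands stated in the proposition (the factor $\tfrac{uv}{2}|\sigma-\theta|^2$ supplying the third term with the additional power of $u$, i.e.\ with $(1+\varphi(\sigma))^{N-1}$).

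Finally, for absolute convergence I would use that $\mathcal{K}_\alpha$ is uniformly bounded above and below on $\mathcal{O}\times S\times S$ (since $\|\varphi\|_\infty<1$), reducing the question to integrability with respect to $|\sigma-\theta|^{-(N+\alpha)}$ times the respective numerator. The second and third integrands are clearly $O(|\sigma-\theta|^{2-N-\alpha})$, which is integrable on $S$ because $\alpha<1$. The first integrand is the delicate one: exploiting $\varphi\in C^{1,\beta}(S)$ with $\beta>\alpha$, a first-order Taylor expansion of $\varphi$ at $\sigma$ (on the sphere, using the curve $\gamma_{\theta,\sigma}$ of Lemma \ref{lem:curve-gamma} together with the $C^{1,\beta}$-modulus of $\nabla\varphi$) yields $|\varphi(\theta)-\varphi(\sigma)-(\theta-\sigma)\cdot\nabla\varphi(\sigma)|\le C|\sigma-\theta|^{1+\beta}$, giving an integrand of order $|\sigma-\theta|^{1+\beta-N-\alpha}$, which is integrable on $S$ since $\beta>\alpha$. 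The only real obstacle in the proof is the algebraic bookkeeping in step two; once the tangency $\sigma\cdot\nabla\varphi(\sigma)=0$ is exploited the decomposition is forced and everything else follows.
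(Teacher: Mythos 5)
Your proposal is correct and follows essentially the same route as the paper: pull back via $F_\varphi$, cancel the factor $\sqrt{(1+\varphi)^2+|\nabla\varphi|^2}$ between $\nu_{S_\varphi}$ and $J_\varphi$, use the identity $|F_\varphi(\theta)-F_\varphi(\sigma)|^2=(\varphi(\theta)-\varphi(\sigma))^2+(1+\varphi(\sigma))(1+\varphi(\theta))|\theta-\sigma|^2$ together with the tangency $\sigma\cdot\nabla\varphi(\sigma)=0$ to reorganize the numerator (your substitution $u=v+(\varphi(\sigma)-\varphi(\theta))$ is the same regrouping the paper performs by adding and subtracting $(\psi(\theta)-\psi(\sigma))\psi(\theta)$), and then the same convergence estimates, with the $|\theta-\sigma|^{1+\beta}$ bound on the Taylor remainder via Lemma~\ref{lem:curve-gamma} and $\beta>\alpha$. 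The only imprecision is that $\cK_\alpha$ is not uniformly bounded over all of $\cO\times S\times S$ but only for each fixed $\varphi$ (by $(1-\|\varphi\|_\infty)^{-N-\alpha}$), which is all the proposition needs.
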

\begin{proof}
Let  $\vp\in \cO$.
By Proposition \ref{prop:geom-pert-sphere}, 
   for every $\th\in S$, we have 
\be\label{eq:def-h-reg}
-h(\vp)(\th)=  \int_{S} \frac{(F_\vp(\th)-F_\vp(\s))\cdot \nu_{S_\vp}(F_\vp(\s)) }{|F_\vp(\th)-F_\vp(\s)|^{N+\alpha}}J_\vp(\s)\,dV(\s) 
\ee
and thus 
\begin{align*}
-& h(\vp)(\th)=\int_{{S}} \frac{(\vp(\th)-\vp(\s))\, \s\cdot\nu_{S_\vp}(F_\vp(\s))\, J_\vp(\s) }{\left( {(\vp(\th)-\vp(\s))^2} +(1+\vp(\s)) (1+\vp(\th)) |\th -\s|^2\right)^{(N+\alpha)/2}}\,dV(\s)\nonumber\\
 &+(1+ \vp(\th)) \int_{{S}} \frac{ (\th-\s )\cdot\nu_{S_\vp}(F_\vp(\s)) \, J_\vp(\s) }{\left( {(\vp(\th)-\vp(\s))^2} + (1+\vp(\s)) (1+\vp(\th))|\th -\s|^2\right)^{(N+\alpha)/2}}\,dV(\s),
\end{align*}
where we used that $2 (1-\th\cdot\s)={|\th -\s|^2} $.   It follows that 
\begin{align} 
-& {h}(\vp)(\th)= \int_{{S}}\frac{\vp(\th)-\vp(\s)}{ |\th-\s|^{N+\alpha}}  \cK_{\a}(\vp,\s,\th)      \s\cdot \nu_{S_\vp}(F_\vp(\s))\, J_\vp(\s)\,  dV(\s) \nonumber\\
 &+ (1+\vp(\th ) )  \int_{{S}}\frac{\th-\s}{  |\th-\s|^{N+\alpha}} \cdot\nu_{S_\vp}(F_\vp(\s))\, J_\vp(\s)\,   \cK_{\a}(\vp,\s,\th)   \,dV(\s) .
  \label{eq:expr-h-almost-ok}
\end{align}

Letting 
  $\psi =1+\vp$, we get 
$$
  J_\vp(\s)=   \psi^{N-2}(\s)\sqrt{\psi^{2}(\s)+|\n \psi(\s)|^2}\qquad \text{and} \qquad   \nu_{S_\vp}(F_\vp(\s))= \frac{\s  \psi(\s)-\n \psi(\s)}{ \sqrt{\psi^{2}(\s)+|\n \psi(\s)|^2}}
$$
from Proposition~\ref{prop:geom-pert-sphere}. Consequently, 
$$
   \s \cdot   \nu_{S_\vp}(F_\vp(\s))=   \frac{\psi(\s)}{ \sqrt{\psi^{2}(\s)+|\n \psi(\s)|^2}}
$$
  and thus
$$
  J_\vp(\s) \s \cdot   \nu_{S_\vp}(F_\vp(\s))= \psi^{N-1}(\s).
$$
  Furthermore
  $$
  (\th-\s)  \cdot \nu_{S_\vp}(F_\vp(\s)) \, J_\vp(\s)=  - (\th-\s)  \cdot\n\psi(\s) \psi^{N-2 }(\s)  +(\th-\s)  \cdot\s   \psi^{N-1 }(\s).
  $$
Using the latter two identities in \eqref{eq:expr-h-almost-ok}, we find that
\begin{align*} 
- {h}(\vp)(\th)=  & \int_{{S}}\frac{\psi(\th)-\psi(\s)}{ |\th-\s|^{N+\alpha}}  \psi^{N-1}(\s)\,  \cK_{\a}(\vp,\s,\th)       \,  dV(\s) \nonumber\\
 &- \psi(\th )  \int_{{S}}\frac{(\th-\s) \cdot\n\psi(\s)  }{  |\th-\s|^{N+\alpha}}      \psi^{N-2 }(\s)      \,   \cK_{\a}(\vp,\s,\th)   \,dV(\s) \\
 &+\psi(\th )  \int_{{S}}\frac{(\th-\s) \cdot\s }{  |\th-\s|^{N+\alpha}}      \psi^{N-1 }(\s)      \,   \cK_{\a}(\vp,\s,\th)   \,dV(\s).
\end{align*}  
 Therefore
\begin{align*} 
-{h}(\vp)(\th)=& \int_{{S}}\frac{(\psi(\th)-\psi(\s)) \psi(\s) -  \psi(\th )(\th-\s) \cdot\n\psi(\s) }{ |\th-\s|^{N+\alpha}}  \psi^{N-2}(\s)\,  \cK_{\a}(\vp,\s,\th)       \,  dV(\s) \nonumber\\
 %
 %
 &+\psi(\th )  \int_{{S}}\frac{(\th-\s) \cdot\s }{  |\th-\s|^{N+\alpha}}      \psi^{N-1 }(\s)      \,   \cK_{\a}(\vp,\s,\th)   \,dV(\s).
\end{align*}  
We add and subtract  $(\psi(\th)-\psi(\s)) \psi(\th)$ to get 
 \begin{align*} 
- {h}(\vp)(\th)= &- \int_{{S}}\frac{(\psi(\th)-\psi(\s)) ^2  }{ |\th-\s|^{N+\alpha}}  \psi^{N-2}(\s)\,  \cK_{\a}(\vp,\s,\th)       \,  dV(\s) \nonumber\\
 &+\psi(\th) \int_{{S}}\frac{  \psi(\th)-\psi(\s)   -  (\th-\s) \cdot\n\psi(\s) }{ |\th-\s|^{N+\alpha}}  \psi^{N-2}(\s)\,  \cK_{\a}(\vp,\s,\th)       \,  dV(\s) \nonumber\\
 &+\psi(\th )  \int_{{S}}\frac{(\th-\s) \cdot\s }{  |\th-\s|^{N+\alpha}}      \psi^{N-1 }(\s)      \,   \cK_{\a}(\vp,\s,\th)   \,dV(\s).
\end{align*} 
We then conclude that 
  \begin{align}\label{eq:formh} 
- {h}(\vp)(\th)=&- \int_{{S}}\frac{(\psi(\th)-\psi(\s)) ^2  }{ |\th-\s|^{N+\alpha}}  \psi^{N-2}(\s)\,  
\cK_{\a}(\vp,\s,\th)       \,  dV(\s) \nonumber\\
 &+\psi(\th) \int_{{S}}\frac{  \psi(\th)-\psi(\s)   -  (\th-\s) \cdot\n\psi(\s) }{ |\th-\s|^{N+\alpha}}  
 \psi^{N-2}(\s)\,  \cK_{\a}(\vp,\s,\th)       \,  dV(\s) \\
 &-\frac{\psi(\th )}{2}  \int_{{S}}\frac{1 }{  |\th-\s|^{N+\alpha-2}}      \psi^{N-1 }(\s)     
 \,   \cK_{\a}(\vp,\s,\th)   \,dV(\s)  \nonumber.
\end{align}  

Let us now check that all integrals above converge absolutely.  Indeed, it is clear that   
\begin{align*}
\int_{{S}}\frac{1 }{  |\th-\s|^{N+\alpha-2}}  &    \psi^{N-1 }(\s)      \,   \cK_{\a}(\vp,\s,\th)   \,dV(\s)\\
&\leq (1-\|\vp\|_{\infty})^{-N-\a} \int_{{S}}\frac{1 }{  |\th-\s|^{N+\alpha-2}}      \psi^{N-1 }(\s)      \,dV(\s)<\infty
\end{align*}
and, since $(\psi(\th)-\psi(\s)) ^2\leq  \|\psi\|_{C^1(S)} ^2 |\th-\s|^2$, we also get 
$$
\int_{{S}}\frac{(\psi(\th)-\psi(\s)) ^2  }{ |\th-\s|^{N+\alpha}}  \psi^{N-2}(\s)\,  \cK_{\a}(\vp,\s,\th)       \,  dV(\s)<\infty.
$$
Next, if $|\th-\s|<1$, we can write 
 $$
  \psi(\th)-\psi(\s)   -  (\th-\s) \cdot\n\psi(\s)=\int_0^1 \left\{\n \psi (\g_{\th,\s}(t))-\n \psi (\g_{\th,\s}(0) ) 
  \right\}\cdot \dot{\g}_{\th,\s}(t) dt
  $$
with $\g_{\th,\s}$ defined in \eqref{eq:def-gamma}. By \eqref{eq:gamma-dot-bound} we thus have 
$$
 |  \psi(\th)-\psi(\s)   -  (\th-\s) \cdot\n\psi(\s)|\leq C  \|\psi\|_{C^{1,\b}(S)} |\th-\s|^{1+\b}, 
$$ 
and this obviously also holds, by enlargening $C>0$ if necessary, for $\th, \s \in S$ with $|\th-\s| \ge 1$.  From this and the fact that $\b\in(\a, 1)$, we obtain
\begin{align}
& \int_{{S}}\frac{|  \psi(\th)-\psi(\s)   -  (\th-\s) \cdot\n\psi(\s)| }{ |\th-\s|^{N+\alpha}}  \psi^{N-2}(\s)\,  \cK_{\a}(\vp,\s,\th)       \,  dV(\s)\\
 &\qquad \leq C (1-\|\vp\|_{\infty})^{-N-\a} \|\psi\|_{C^{1,\b}(S)}^{N-1 } \int_{{S}}\frac{1 }{  |\th-\s|^{N+\alpha-1-\b}}           \,dV(\s)<\infty. 
\end{align}
We then have that the integrals in the expression of $h$ converge absolutely.
\end{proof}

 For $0<r<2$, we now put 
\be 
\cB(r)=\left\{ \psi\in C^{1,\b}(S)\,:\, r< \psi<2 \ \text{ in }S\right\}. 
\ee
 We  consider the map $\ov{\cK}_{\a}:\cB(r)\times S \times S\to \R$  defined by 
\begin{align}
\ov{\cK}_{\a}(\psi,\s,\th):=\frac{ 1}{\left( \frac{(\psi (\th)-\psi(\s))^2}{|\th-\s|^2} +  \psi(\s)  \psi(\th)\right)^{(N+\a)/2}}. 
\end{align}
We   also define  $\ti{h}: \cB(r) \to L^\infty(S)$ by   $\ti{h}(\psi) := {h}(\psi-1) $.    Then, by Proposition \ref{prop:new-express-nmc}, we have
\begin{align} 
 \ti{h}(\psi)(\th )= &\  h(\psi -1)(\th)= \int_{{S}}\frac{(\psi(\th)-\psi(\s)) ^2  }{ |\th-\s|^{N+\alpha}}  
 \psi^{N-2}(\s)\,  \ov{\cK}_{\a}(\psi,\s,\th)       \,  dV(\s) \nonumber\\
 &-\psi(\th) \int_{{S}}\frac{  \psi(\th)-\psi(\s)   - (\th-\s) \cdot\n\psi(\s) }{ |\th-\s|^{N+\alpha}}  \psi^{N-2}(\s)\, \ov{\cK}_{\a}(\psi,\s,\th)       \,  dV(\s) \label{eq:def-ti-h-00}\\
 &+\frac{\psi(\th )}{2}  \int_{{S}}\frac{1 }{  |\th-\s|^{N+\alpha-2}}      \psi^{N-1 }(\s)      \,   \ov{\cK}_{\a}(\psi,\s,\th)   \,dV(\s).\nonumber
\end{align}
%
  
%
The proof of Theorem \ref{sec:nmc-lattice-prop-h-0} will be completed once  we prove that  
$ \ti{h} :  \cB(r) \to C^{\b-\a}(S )$ is smooth for every $r>0$.

We define $\L_1 : C^{1,\b}(S)\times S \times S\to \R$  by
$$
\L_1(\psi,\s,\th)=   \psi(\th)-\psi(\s)   -  (\th-\s) \cdot\n\psi(\s) =\int_0^1 \left\{\n \psi (\g_{\th,\s}(t))-
\n \psi (\g_{\th,\s}(0) ) \right\}\cdot \dot{\g}_{\th,\s}(t) dt
$$
and $  \L_2 : C^{1,\b}(S)\times C^{1,\b}(S)\times  S \times S\to \R$ by
\begin{align*}
\L_2(\psi_1,\psi_2 ,\s,\th)&=  (\psi_1(\th)-\psi_1(\s)) (\psi_2(\th)-\psi_2(\s)). \nonumber 
\end{align*}
With this notation, we have 
\begin{align} 
 \ti{h}(\psi)(\th )= {h}(\psi-1)(\th)= & \int_{{S}}\frac{\Lambda_2(\psi,\psi,\s,\th) }{ |\th-\s|^{N+\alpha}}  \psi^{N-2}(\s)\,  \ov{\cK}_{\a}(\psi,\s,\th)       \,  dV(\s) \nonumber\\
 &-\psi(\th) \int_{{S}}\frac{\Lambda_1(\psi,\s,\th)}{ |\th-\s|^{N+\alpha}}  \psi^{N-2}(\s)\, \ov{\cK}_{\a}(\psi,\s,\th)       \,  dV(\s) \label{eq:def-ti-h-0}\\
 &+\frac{\psi(\th )}{2}  \int_{{S}}\frac{1 }{  |\th-\s|^{N+\alpha-2}}      \psi^{N-1 }(\s)      \,   \ov{\cK}_{\a}(\psi,\s,\th)   \,dV(\s).\nonumber
\end{align}

\begin{remark}\label{example-rotations}{\rm
It will be convenient to modify this representation further such that the singularity of the integrand does not depend on $\th$. For this we fix $e \in S$ and a Lipschitz continuous map of rotations $S \mapsto SO(N)$, $\th \mapsto R_\theta$ with the property that 
\begin{equation}
  \label{eq:def-Se}
S_e:=  \{\th \in S\::\: \th \cdot e \ge 0\}  \subset  \{\th \in S\::\: R_{\th}e = \th \}.
\end{equation}

The following is a possible way to construct $R$.
For fixed $e \in S$, consider the map $\th \mapsto R_\theta$ defined as follows. For $\theta \in S$ with $\th \cdot  e \ge 0$, we let $R_\theta$ be the rotation of the angle $\arccos \th \cdot e$ which maps $e$ to $\th$ and keeps all vectors perpendicular to $\th$ and $e$ fixed. We then extend the map $\th \mapsto R_\theta$ to all of $S$ as an even map with respect to reflection at the hyperplane $\{\th \in \R^N \::\: \th \cdot e = 0\}$.
}
\end{remark}

By construction, it is clear that 
\be\label{eq:Rota-isom}
|R_{\th}\s-\th |=|\s-e| \qquad \textrm{ for all $\th\in S_e$ and $\s\in S$}. 
\ee
Moreover, the Lipschitz property of the map $\th \mapsto R_\th$ implies that there is a constant $C>0$ with 
\be \label{eq:R-Lipschitz}
\|R_{\th_1} - R_{\th_2}\| \le C |\th_1-\th_2| \qquad \text{for all $\th_1,\th_2 \in S$,} 
\ee 
where, here and in the following, $\|\cdot\|$ denotes the usual operator norm with respect to the 
Euclidean norm on $\R^N$.

Thanks to \eqref{eq:Rota-isom},   a change of variable gives
\begin{align} 
 \ti{h}(\psi)(\th )=& \int_{{S}}\frac{\Lambda_2(\psi,\psi,R_\th \s,\th) }{ |e-\s|^{N+\alpha}}  \psi^{N-2}(R_\th \s)\,  \ov{\cK}_{\a}(\psi,R_\th \s,\th)       \,  dV(\s) \nonumber\\
 &-\psi(\th) \int_{{S}}\frac{\Lambda_1(\psi,R_\th \s,\th)}{ |e-\s|^{N+\alpha}}  \psi^{N-2}(R_\th \s)\, \ov{\cK}_{\a}(\psi,R_\th \s,\th)       \,  dV(\s) \label{eq:def-ti-h}\\
 &+\frac{\psi(\th )}{2}  \int_{{S}}\frac{ \psi^{N-1 }(R_\th \s)      \, }{  |e-\s|^{N+\alpha-2}}        \ov{\cK}_{\a}(\psi, R_\th \s,\th)   \,dV(\s) \qquad \text{ for $\th \in S_e$.} \nonumber
\end{align}

In the following, for a function $f: S \to \R$, we use the notation
$$
[f; \theta_1,\theta_2]:= f(\theta_1)-f(\theta_2)\qquad \text{ for $\theta_1,\theta_2 \in S$,} 
$$
and we note the obvious equality 
\be \label{eq:prod-bracket}
[fg; \theta_1,\theta_2] = [f ;\theta_1,\theta_2]g(\theta_1) +f(\theta_2)[g;\theta_1,\theta_2] \quad \text{for $f,g: S \to \R$, $\theta_1,\theta_2 \in S$.}
\ee

In the next results we collect helpful estimates for the functionals $\Lambda_1$ and $\Lambda_2$.

\begin{lemma}
\label{lemma-L_1-est-sec:regul-nonl-mean-1}
There exists a  constant $C>0$ depending only on $N$ and $\beta$
such that for all $\s,\s_1,\s_2,\th,\th_1,\th_2\in S$ and $\psi \in C^{1,\b}(S)$ we have 
\be \label{eq:estL1-th-sig-0}
 | \L_1(\psi,\s,\th)|\leq C  \|\psi\|_{C^{1,\b}(S)} |\s-\th|^{1+\b}
 \ee 
 and 
 \begin{align}
| \L_1(\psi,\s_1,\th_1)- \L_1(\psi,\s_2,\th_2)|\leq  & \ C\|\psi\|_{C^{1,\b}(S)} \,|\th_1-\s_1|  ( |\th_1-\th_2|^\b+|\s_1-\s_2|^\b )\nonumber\\
&+C  \|\psi\|_{C^{1,\b}(S)}\,|\th_2-\s_2|^\b( |\th_1-\th_2|+|\s_1-\s_2| ) .
\label{eq:estL1-th1-sig1-0}
\end{align}
\end{lemma}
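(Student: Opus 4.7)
The plan is to use the integral representation
\[
\Lambda_1(\psi,\sigma,\theta)=\int_0^1\bigl\{\nabla\psi(\gamma_{\theta,\sigma}(t))-\nabla\psi(\gamma_{\theta,\sigma}(0))\bigr\}\cdot\dot\gamma_{\theta,\sigma}(t)\,dt
\]
already recorded in the statement, combined directly with the three bounds \eqref{eq:gamma-dot-bound}, \eqref{eq:gamma-dot-bound-1}, \eqref{eq:gamma-dot-bound-1-1} of Lemma \ref{lem:curve-gamma}. Both estimates will first be proved on the compact set where $|\theta-\sigma|\le 1$ (so that $(\theta,\sigma)\in S_*$ and $\gamma_{\theta,\sigma}$ is well behaved); the complementary regime will be absorbed into the constant via the trivial bound $|\Lambda_1(\psi,\sigma,\theta)|\le 3\|\psi\|_{C^1(S)}$, together with $|\theta-\sigma|^{1+\beta}\ge 1$ (respectively, the fact that the right-hand side of \eqref{eq:estL1-th1-sig1-0} stays bounded below by a constant multiple of $\|\psi\|_{C^{1,\beta}(S)}$ once any of the occurring differences is $\ge 1$, which follows from the reverse triangle inequality applied to $|\theta_i-\sigma_i|$).

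For \eqref{eq:estL1-th-sig-0} in the regime $|\theta-\sigma|\le 1$, integration of \eqref{eq:gamma-dot-bound} yields the auxiliary bound $|\gamma_{\theta,\sigma}(t)-\gamma_{\theta,\sigma}(0)|\le C\,t\,|\theta-\sigma|$; then the Hölder estimate on $\nabla\psi$ gives
\[
|\nabla\psi(\gamma_{\theta,\sigma}(t))-\nabla\psi(\gamma_{\theta,\sigma}(0))|\le \|\psi\|_{C^{1,\beta}(S)}\,C t^\beta|\theta-\sigma|^\beta,
\]
which multiplied by $|\dot\gamma_{\theta,\sigma}(t)|\le C|\theta-\sigma|$ and integrated in $t$ produces $C\|\psi\|_{C^{1,\beta}(S)}|\theta-\sigma|^{1+\beta}$.

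The main work is \eqref{eq:estL1-th1-sig1-0}. Setting $\gamma_i:=\gamma_{\theta_i,\sigma_i}$ for $i=1,2$, I would add and subtract $[\nabla\psi(\gamma_2(t))-\nabla\psi(\gamma_2(0))]\cdot\dot\gamma_1(t)$ inside the integrand to split
\[
\Lambda_1(\psi,\sigma_1,\theta_1)-\Lambda_1(\psi,\sigma_2,\theta_2)=\int_0^1 A(t)\,dt+\int_0^1 B(t)\,dt,
\]
where
\[
A(t)=\Bigl([\nabla\psi(\gamma_1(t))-\nabla\psi(\gamma_2(t))]-[\nabla\psi(\gamma_1(0))-\nabla\psi(\gamma_2(0))]\Bigr)\cdot\dot\gamma_1(t)
\]
and
\[
B(t)=[\nabla\psi(\gamma_2(t))-\nabla\psi(\gamma_2(0))]\cdot\bigl(\dot\gamma_1(t)-\dot\gamma_2(t)\bigr).
\]
For $B(t)$, the scalar factor is $\le C\|\psi\|_{C^{1,\beta}(S)}t^\beta|\theta_2-\sigma_2|^\beta$ exactly as in the first estimate (applied to $\gamma_2$), while \eqref{eq:gamma-dot-bound-1-1} bounds the vector factor by $C(|\theta_1-\theta_2|+|\sigma_1-\sigma_2|)$, producing the second term of \eqref{eq:estL1-th1-sig1-0}. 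For $A(t)$, the triangle inequality and the $C^\beta$-bound on $\nabla\psi$ give
\[
|[\nabla\psi(\gamma_1(t))-\nabla\psi(\gamma_2(t))]-[\nabla\psi(\gamma_1(0))-\nabla\psi(\gamma_2(0))]|\le \|\psi\|_{C^{1,\beta}(S)}\bigl(|\gamma_1(t)-\gamma_2(t)|^\beta+|\gamma_1(0)-\gamma_2(0)|^\beta\bigr),
\]
which via \eqref{eq:gamma-dot-bound-1} and $\gamma_i(0)=\sigma_i$ is $\le C\|\psi\|_{C^{1,\beta}(S)}(|\theta_1-\theta_2|^\beta+|\sigma_1-\sigma_2|^\beta)$; combining with $|\dot\gamma_1(t)|\le C|\theta_1-\sigma_1|$ from \eqref{eq:gamma-dot-bound} delivers the first term of \eqref{eq:estL1-th1-sig1-0}.

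The main obstacle, and the reason the decomposition above has to be chosen carefully, is the asymmetric appearance of $|\theta_1-\sigma_1|$ and $|\theta_2-\sigma_2|^\beta$ on the right-hand side: a naive splitting where one puts the second-difference on $\dot\gamma$ rather than on $\nabla\psi$ gives back $|\theta_2-\sigma_2|$ (or $|\theta_1-\sigma_1|^\beta$) in the wrong place and does not close. The choice above routes the "second difference" of $\nabla\psi$ against $\dot\gamma_1$ (producing the Hölder exponent on the $\theta_i$'s and the clean factor $|\theta_1-\sigma_1|$), and routes the "first difference" of $\nabla\psi$ along $\gamma_2$ against $\dot\gamma_1-\dot\gamma_2$ (producing the Hölder exponent on $|\theta_2-\sigma_2|$ and the linear factor in the perturbation). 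Once this splitting is identified, the proof reduces to an application of Lemma \ref{lem:curve-gamma}.
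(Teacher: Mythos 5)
Your main argument is exactly the paper's: the same two-term splitting of $\Lambda_1(\psi,\sigma_1,\theta_1)-\Lambda_1(\psi,\sigma_2,\theta_2)$ (the second difference of $\nabla\psi$ paired with $\dot\gamma_{\theta_1,\sigma_1}$, and the first difference of $\nabla\psi$ along $\gamma_{\theta_2,\sigma_2}$ paired with $\dot\gamma_{\theta_1,\sigma_1}-\dot\gamma_{\theta_2,\sigma_2}$), estimated with \eqref{eq:gamma-dot-bound}, \eqref{eq:gamma-dot-bound-1}, \eqref{eq:gamma-dot-bound-1-1} of Lemma~\ref{lem:curve-gamma}; the proof of \eqref{eq:estL1-th-sig-0} also matches.

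There is, however, one incorrect step: your disposal of the complementary regime $\max\{|\theta_1-\sigma_1|,|\theta_2-\sigma_2|\}\ge 1$ for \eqref{eq:estL1-th1-sig1-0}. The right-hand side of \eqref{eq:estL1-th1-sig1-0} is \emph{not} bounded below by a constant multiple of $\|\psi\|_{C^{1,\beta}(S)}$ in that regime: every term carries a factor $|\theta_1-\theta_2|$ or $|\sigma_1-\sigma_2|$ (to the power $1$ or $\beta$), so it vanishes when $(\theta_2,\sigma_2)=(\theta_1,\sigma_1)$ and tends to $0$ as $(\theta_2,\sigma_2)\to(\theta_1,\sigma_1)$, no matter how large $|\theta_1-\sigma_1|$ is. (The left-hand side also tends to zero there, which is why the inequality still holds, but your "absorb into the constant" argument does not prove it.) The correct easy-case argument estimates the left-hand side directly from the definition: writing
\[
\Lambda_1(\psi,\sigma_1,\theta_1)-\Lambda_1(\psi,\sigma_2,\theta_2)
=\bigl(\psi(\theta_1)-\psi(\theta_2)\bigr)-\bigl(\psi(\sigma_1)-\psi(\sigma_2)\bigr)
-(\theta_1-\theta_2)\cdot\nabla\psi(\sigma_2)+(\sigma_1-\sigma_2)\cdot\nabla\psi(\sigma_2)
-(\theta_1-\sigma_1)\cdot\bigl(\nabla\psi(\sigma_1)-\nabla\psi(\sigma_2)\bigr),
\]
one gets
\[
|\Lambda_1(\psi,\sigma_1,\theta_1)-\Lambda_1(\psi,\sigma_2,\theta_2)|
\le C\|\psi\|_{C^{1,\beta}(S)}\bigl(|\theta_1-\theta_2|+|\sigma_1-\sigma_2|+|\theta_1-\sigma_1|\,|\sigma_1-\sigma_2|^{\beta}\bigr),
\]
and since all chordal distances are at most $2$ (so $x\le 2^{1-\beta}x^{\beta}$), the linear terms are absorbed into the first group on the right of \eqref{eq:estL1-th1-sig1-0} when $|\theta_1-\sigma_1|\ge 1$, and into the second group when $|\theta_2-\sigma_2|\ge 1$. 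With this replacement (and the analogous trivial remark you already made for \eqref{eq:estL1-th-sig-0}), your proof is complete and in substance identical to the paper's.
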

\begin{proof}
To derive the   estimates in the lemma, we may assume that 
\be \label{eq:max-th-sig--th12-sig12}
\max \{|\s-\th|, |\th_1-\s_1|, |\th_2-\s_2| \}
< 1
\ee
(otherwise the estimates are easy to prove). Having \eqref{eq:max-th-sig--th12-sig12} 
is essential for applying Lemma~\ref{lem:curve-gamma} in the sequel. We have 
$$
\L_1(\psi,\s,\th)=   \psi(\th)-\psi(\s)   -  (\th-\s) \cdot\n\psi(\s) =\int_0^1 \left\{\n \psi (\g_{\th,\s}(t))-
\n \psi (\g_{\th,\s}(0) ) \right\}\cdot \dot{\g}_{\th,\s}(t) dt,
$$
where $\g_{\th,\s}$ is defined in \eqref{eq:def-gamma}.
Therefore  \eqref{eq:estL1-th-sig-0} follows  from \eqref{eq:gamma-dot-bound}.

We now prove \eqref{eq:estL1-th1-sig1-0}. We   have 
\begin{align*}
& \hspace{-1cm} \L_1(\psi,\s_1,\th_1)- \L_1(\psi,\s_2,\th_2)\\
= & \ \int_0^1 \left\{\n \psi (\g_{\th_1,\s_1}(t))-\n \psi (\g_{\th_1,\s_1}(0) ) \right\}\cdot \dot{\g}_{\th_1,\s_1}(t) dt\\
&- \int_0^1 \left\{\n \psi (\g_{\th_2,\s_2}(t))-\n \psi (\g_{\th_2,\s_2}(0) ) \right\}\cdot \dot{\g}_{\th_2,\s_2}(t) dt\\
= & \ \int_0^1 \left\{\n \psi (\g_{\th_1,\s_1}(t)) -\n \psi (\g_{\th_2,\s_2}(t))+\n\psi(\s_2)-\n \psi (\s_1 ) \right\}\cdot \dot{\g}_{\th_1,\s_1}(t) dt\\
&+  \int_0^1 \left\{\n \psi (\g_{\th_2,\s_2}(t))-\n \psi (\g_{\th_2,\s_2}(0) ) \right\}\cdot( \dot{\g}_{\th_1,\s_1}(t)- \dot{\g}_{\th_2,\s_2}(t)) dt.
\end{align*} 
This implies that
\begin{align*}
&| \L_1(\psi,\s_1,\th_1)- \L_1(\psi,\s_2,\th_2)|\\
&\leq \|\psi\|_{C^{1,\b}(S)}  \int_0^1 |\g_{\th_1,\s_1}(t) -\g_{\th_2,\s_2}(t) |^\b   | \dot{\g}_{\th_1,\s_1}(t)| dt 
+ \|\psi\|_{C^{1,\b}(S)} |\s_1 -\s_2 |^\b  \int_0^1     | \dot{\g}_{\th_1,\s_1}(t)| dt\\
&\qquad +\|\psi\|_{C^{1,\b}(S)}   \int_0^1 \left| \g_{\th_2,\s_2}(t)- \g_{\th_2,\s_2}(0)  \right|^\b |\dot{\g}_{\th_1,\s_1}(t)- \dot{\g}_{\th_2,\s_2}(t)| dt\\
&\leq \|\psi\|_{C^{1,\b}(S)}  \int_0^1 |\g_{\th_1,\s_1}(t) -\g_{\th_2,\s_2}(t) |^\b   | \dot{\g}_{\th_1,\s_1}(t)| dt + \|\psi\|_{C^{1,\b}(S)}  |\s_1 -\s_2 |^\b \int_0^1   | \dot{\g}_{\th_1,\s_1}(t)| dt\\
&\qquad +\|\psi\|_{C^{1,\b}(S)}   \int_0^1 \left| \int_0^1t \dot{\g}_{\th_2,\s_2}(rt) dr  \right|^\b |\dot{\g}_{\th_1,\s_1}(t)- \dot{\g}_{\th_2,\s_2}(t)| dt.
\end{align*} 
Thanks to \eqref{eq:gamma-dot-bound}, we get
\begin{align*}
&| \L_1(\psi,\s_1,\th_1)- \L_1(\psi,\s_2,\th_2)| \\
& \quad \leq C \|\psi\|_{C^{1,\b}(S)} |\s_1-\th_1|\left( \int_0^1 |\g_{\th_1,\s_1}(t) -\g_{\th_2,\s_2}(t) |^\b    dt+  |\s_1-\s_2|^\b\right) \\
&\qquad +\|\psi\|_{C^{1,\b}(S)}   \int_0^1 \left| \int_0^1t \dot{\g}_{\th_2,\s_2}(rt) dr \right|^\b |\dot{\g}_{\th_1,\s_1}(t)- \dot{\g}_{\th_2,\s_2}(t)| dt.
\end{align*} 

From Lemma \ref{lem:curve-gamma}, we have   that,
 for every $\th_1,\s_1,\th_2,\s_2\in S$ and satisfying \eqref{eq:max-th-sig--th12-sig12},
$$
| \g_{\th_1,\s_1}(t)- \g_{\th_2,\s_2}(t)|+|  \dot{\g}_{\th_1,\s_1}(t)-\dot{\g}_{\th_2,\s_2}(t)|\leq C( |\th_1-\th_2|+|\s_1-\s_2| ) \qquad \textrm{ for every $t\in[0,1]$}
$$
and 
$$
\left|\dot{\g}_{\th_2,\s_2}(rt)  \right| \leq C |\th_2-\s_2| \qquad \textrm{ for every $t,r\in[0,1]$}.
$$
Therefore 
\begin{align*}
| \L_1(\psi,\s_1,\th_1)- \L_1(\psi,\s_2,\th_2)|&\leq  C\|\psi\|_{C^{1,\b}(S)}  |\th_1-\s_1|
\left( |\th_1-\th_2|^\b+|\s_1-\s_2|^\b \right)\\
&\quad +C  \|\psi\|_{C^{1,\b}(S)}|\th_2-\s_2|^\b( |\th_1-\th_2|+|\s_1-\s_2| ) .
\end{align*}
This ends the proof of \eqref{eq:estL1-th1-sig1-0}.   
\end{proof}

\begin{corollary}
\label{cor-L_1-est-sec:regul-nonl-mean-1}
There exists a  constant $C>0$, depending only on $N$ and $\beta$,  such that for all 
$e\in S$, $\s \in S$, all $\th,\th_1,\th_2\in S_e$ and all $\psi \in C^{1,\b}(S)$ we have 
\be \label{eq:estL1-th-sig}
 | \L_1(\psi,R_\th \s,\th)|\leq C  \|\psi\|_{C^{1,\b}(S)} |e-\s|^{1+\b}
 \ee 
 and 
 \begin{equation}
| \L_1(\psi, R_{\th_1} \s,\th_1)- \L_1(\psi, R_{\th_2} \s,\th_2)|\leq  C\|\psi\|_{C^{1,\b}(S)} \, \bigl( |e-\s| 
|\th_1-\th_2|^\b + |e-\s|^\b|\th_1-\th_2|\bigr) .
\label{eq:estL1-th1-sig1}
\end{equation}
\end{corollary}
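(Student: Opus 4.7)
The plan is to reduce both inequalities directly to Lemma~\ref{lemma-L_1-est-sec:regul-nonl-mean-1} by substituting $\sigma_i = R_{\theta_i}\sigma$ and then exploiting the two key properties of the rotation map $\theta \mapsto R_\theta$: the isometric identity \eqref{eq:Rota-isom} (which holds precisely because $\theta_1,\theta_2 \in S_e$), and the Lipschitz bound \eqref{eq:R-Lipschitz}. No new analytic estimates on $\Lambda_1$ are needed beyond what is already in Lemma~\ref{lemma-L_1-est-sec:regul-nonl-mean-1}.

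First I would prove \eqref{eq:estL1-th-sig}. For $\theta \in S_e$ and $\sigma \in S$, \eqref{eq:Rota-isom} gives $|R_\theta \sigma - \theta| = |\sigma - e|$, so applying \eqref{eq:estL1-th-sig-0} with the pair $(R_\theta \sigma, \theta)$ yields
\[
|\Lambda_1(\psi, R_\theta \sigma, \theta)| \le C\|\psi\|_{C^{1,\beta}(S)}\, |R_\theta \sigma - \theta|^{1+\beta} = C\|\psi\|_{C^{1,\beta}(S)}\, |e-\sigma|^{1+\beta},
\]
which is the desired bound.

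Next I would prove \eqref{eq:estL1-th1-sig1}. Set $\sigma_i := R_{\theta_i}\sigma$ for $i=1,2$. By \eqref{eq:Rota-isom} applied at $\theta_i \in S_e$,
\[
|\theta_i - \sigma_i| = |e - \sigma| \qquad (i=1,2),
\]
and by \eqref{eq:R-Lipschitz}, since $|\sigma|=1$,
\[
|\sigma_1 - \sigma_2| = |(R_{\theta_1} - R_{\theta_2})\sigma| \le \|R_{\theta_1}-R_{\theta_2}\|\,|\sigma| \le C|\theta_1-\theta_2|.
\]
Substituting these three identities into \eqref{eq:estL1-th1-sig1-0} gives
\begin{align*}
|\Lambda_1(\psi,R_{\theta_1}\sigma,\theta_1) - \Lambda_1(\psi,R_{\theta_2}\sigma,\theta_2)|
&\le C\|\psi\|_{C^{1,\beta}(S)}\,|e-\sigma|\,\bigl(|\theta_1-\theta_2|^\beta + C^\beta|\theta_1-\theta_2|^\beta\bigr) \\
&\quad + C\|\psi\|_{C^{1,\beta}(S)}\,|e-\sigma|^\beta\bigl(|\theta_1-\theta_2| + C|\theta_1-\theta_2|\bigr),
\end{align*}
and absorbing the constants produces exactly the claimed estimate.

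This is a short deduction and I do not anticipate a genuine obstacle; the only point requiring care is keeping track of the hypothesis $\theta_1,\theta_2 \in S_e$, which is precisely what is needed so that \eqref{eq:Rota-isom} applies at both base points and lets the $\sigma$-dependence be completely absorbed into the single quantity $|e-\sigma|$.
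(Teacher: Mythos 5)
Your proposal is correct and is essentially identical to the paper's proof: the paper also simply applies \eqref{eq:estL1-th-sig-0} and \eqref{eq:estL1-th1-sig1-0} with $\s$, $\s_1$, $\s_2$ replaced by $R_{\th}\s$, $R_{\th_1}\s$, $R_{\th_2}\s$, using \eqref{eq:Rota-isom} together with $|R_{\th_1}\s - R_{\th_2}\s| \le \|R_{\th_1}-R_{\th_2}\| \le C|\th_1-\th_2|$ from \eqref{eq:R-Lipschitz}. Nothing further is needed.
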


\begin{proof}
It suffices to apply \eqref{eq:estL1-th-sig-0} and (\ref{eq:estL1-th1-sig1-0}) with $\s$, $\s_1$ and $\s_2$
replaced by$R_{\th}\s$, $R_{\th_1}\s$ and $R_{\th_2}\s$ respectively, to use \eqref{eq:Rota-isom},
and the fact  that 
$$
 |R_{\th_1}\s - R_{\th_2}\s| \le \|R_{\th_1}-R_{\th_2}\| \le C |\th_1-\th_2|.
$$  
\end{proof}

Next, we derive estimates for $\Lambda_2$.
\begin{lemma}
\label{lemma-L-2-est-sec:regul-nonl-mean}
There exists a  constant $C>0$, depending only on $N$ and $\beta$, such that for all 
$e\in S$, $\s \in S$,  all $\th,\th_1,\th_2 \in S_e$  and all $\psi_1,\psi_2 \in C^{1,\b}(S)$ we have 
\be \label{eq:estL2-th-sig}
 | \L_2(\psi_1,\psi_2, R_\th \s, \th)|\leq C  \|\psi_1\|_{C^{1,\b}(S)}\|\psi_2\|_{C^{1,\b}(S)} |e-\s|^{2}
 \ee 
and 
\begin{align}
| \L_2(\psi_1,\psi_2,& R_{\theta_1} \s,\th_1)- \L_2(\psi_1,\psi_2, R_{\theta_2} \s, \th_2)| \label{eq:estL2-th1-sig1}\\
& \leq  C\|\psi_1\|_{C^{1,\b}(S)} \|\psi_2\|_{C^{1,\b}(S)}  |e-\s|^2 |\theta_1-\theta_2|^\b. \nonumber
\end{align} 
\end{lemma}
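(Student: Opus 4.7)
The plan is to factor $\Lambda_2(\psi_1,\psi_2,R_\theta\sigma,\theta) = f_1(\theta)\,f_2(\theta)$ with
$$
f_i(\theta) := \psi_i(\theta) - \psi_i(R_\theta\sigma), \qquad i = 1,2,
$$
and then combine separate estimates on $|f_i|$ and on the H\"older modulus $|f_i(\theta_1)-f_i(\theta_2)|$ via the product identity \eqref{eq:prod-bracket}. For the pointwise bound \eqref{eq:estL2-th-sig}, I would observe that for $\theta \in S_e$ one has $\theta = R_\theta e$ by \eqref{eq:def-Se}, so $\theta - R_\theta\sigma = R_\theta(e-\sigma)$ has Euclidean norm exactly $|e-\sigma|$. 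Lipschitz continuity of $\psi_i$ on $S$ then yields $|f_i(\theta)| \le \|\psi_i\|_{C^{1,\beta}(S)}\,|e-\sigma|$, and multiplying the two estimates produces \eqref{eq:estL2-th-sig}.

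To prove \eqref{eq:estL2-th1-sig1}, by \eqref{eq:prod-bracket} and the pointwise bound just established it suffices to show
\begin{equation*}
|f_i(\theta_1) - f_i(\theta_2)| \le C\,\|\psi_i\|_{C^{1,\beta}(S)}\,|e-\sigma|\,|\theta_1-\theta_2|^\beta \qquad\text{for }\theta_1,\theta_2 \in S_e.
\end{equation*}
Writing $f_i(\theta_j) = \int_0^1 \nabla\psi_i(\gamma_j(t))\cdot\dot\gamma_j(t)\,dt$ with $\gamma_j := \gamma_{\theta_j,R_{\theta_j}\sigma}$, I would use the standard decomposition
\begin{equation*}
f_i(\theta_1)-f_i(\theta_2) = \int_0^1 \bigl[\nabla\psi_i(\gamma_1)-\nabla\psi_i(\gamma_2)\bigr]\cdot\dot\gamma_1\,dt + \int_0^1 \nabla\psi_i(\gamma_2)\cdot\bigl[\dot\gamma_1-\dot\gamma_2\bigr]\,dt.
\end{equation*}
The first integral is controlled by the H\"older continuity of $\nabla\psi_i$, the Lipschitz dependence of $\gamma$ on its endpoints from \eqref{eq:gamma-dot-bound-1}, the Lipschitz dependence of $R_\theta$ from \eqref{eq:R-Lipschitz}, and the uniform bound $|\dot\gamma_1(t)|\le C|e-\sigma|$ from \eqref{eq:gamma-dot-bound}; this directly yields the required majorant $C\|\psi_i\|_{C^{1,\beta}(S)}|e-\sigma|\,|\theta_1-\theta_2|^\beta$.

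The main obstacle lies in the second integral, since the generic estimate $|\dot\gamma_1-\dot\gamma_2| \le C|\theta_1-\theta_2|$ from \eqref{eq:gamma-dot-bound-1-1} is missing the crucial factor of $|e-\sigma|$. To recover it, I would exploit the special structure of the endpoints, namely $\theta_j - R_{\theta_j}\sigma = R_{\theta_j}(e-\sigma)$. The explicit formula \eqref{eq:formular-derv-gamma} allows one to write $\dot\gamma_{a,b}(t) = M(t,a,b)(a-b)$ for a matrix-valued function $M$ that is smooth, hence Lipschitz, on $[0,1]\times S_*$. Consequently,
\begin{equation*}
\dot\gamma_1(t)-\dot\gamma_2(t) = \bigl[M(t,\theta_1,R_{\theta_1}\sigma)\,R_{\theta_1} - M(t,\theta_2,R_{\theta_2}\sigma)\,R_{\theta_2}\bigr](e-\sigma),
\end{equation*}
and the operator norm of the bracket is bounded by $C|\theta_1-\theta_2|$ using the Lipschitz continuity of $M$ together with \eqref{eq:R-Lipschitz}. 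This yields the sharper bound $|\dot\gamma_1(t)-\dot\gamma_2(t)| \le C|e-\sigma|\,|\theta_1-\theta_2|$; inserting it into the second integral, using $|\nabla\psi_i|\le \|\psi_i\|_{C^{1,\beta}(S)}$, and absorbing one factor of $|\theta_1-\theta_2|$ into $|\theta_1-\theta_2|^\beta$ by boundedness of $S$ completes the argument.
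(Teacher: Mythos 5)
Your proposal is correct and follows essentially the same route as the paper: the same factorization of $\Lambda_2$ into the two differences $\psi_i(\theta)-\psi_i(R_\theta\sigma)$, the same curve-integral representation along $\gamma_{\theta_j,R_{\theta_j}\sigma}$, and the same two-term split, with the first term handled exactly as in the paper. The only real difference is how you recover the missing factor $|e-\sigma|$ in $|\dot\gamma_1-\dot\gamma_2|$: the paper observes the exact covariance identity $\dot\gamma_{\theta,R_\theta\sigma}(t)=R_\theta\,\dot\gamma_{e,\sigma}(t)$ (a consequence of \eqref{eq:formular-derv-gamma} and $\Upsilon(t,\theta,R_\theta\sigma)=\Upsilon(t,e,\sigma)$), so that $\dot\gamma_1-\dot\gamma_2=(R_{\theta_1}-R_{\theta_2})\dot\gamma_{e,\sigma}$ and \eqref{eq:gamma-dot-bound}, \eqref{eq:R-Lipschitz} finish the job; you instead factor $\dot\gamma_{a,b}(t)=M(t,a,b)(a-b)$ with $M(t,a,b)=\Upsilon(t,a,b)^{-1}\mathrm{Id}+\tfrac{1-2t}{2}\Upsilon(t,a,b)^{-3}\,(b+t(a-b))(a-b)^{T}$, which is indeed smooth (hence Lipschitz) on a neighborhood of $[0,1]\times S_*$, and then use $\theta_j-R_{\theta_j}\sigma=R_{\theta_j}(e-\sigma)$ from \eqref{eq:def-Se}. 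Both arguments yield $|\dot\gamma_1-\dot\gamma_2|\le C|e-\sigma|\,|\theta_1-\theta_2|$; the paper's identity is slightly cleaner, your version costs a short verification that $M$ is well defined and Lipschitz but is equally valid. One small point you left implicit: all of this (including the mere definition of $\gamma_{\theta,R_\theta\sigma}$ and the use of Lemma~\ref{lem:curve-gamma}, which requires $(\theta,R_\theta\sigma)\in S_*$) needs the reduction to $|e-\sigma|\le 1$; as in the paper, the complementary case $|e-\sigma|\ge 1$ is handled trivially by sup-norm and Lipschitz bounds, since then $|e-\sigma|^2\ge 1$, so this is a routine omission rather than a gap.
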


\begin{proof}
To prove the lemma, by \eqref{eq:Rota-isom} and \eqref{eq:R-Lipschitz} it is easy to see that     
we may assume $|e-\s| < 1$. This implies that 
$$
|R_\th \s-\th|= |R_{\th_1} \s-\th_1|= |R_{\th_2}\s- \th_2| = |\s-e| < 1,
$$
and therefore allows us to apply Lemma~\ref{lem:curve-gamma} in the following.
By \eqref{eq:gamma-dot-bound} we have 
\begin{align}
  |\psi(\th) &- \psi(R_\th \s)| = \Bigl| \int_0^1 \n \psi (\g_{\th,R_\th \s}(t)) \cdot \dot{\g}_{\th,R_\th \s}(t) dt\Bigr| \nonumber\\
&\le C  \|\psi\|_{C^{1,\b}(S)} |\th-R_\th \s| = C  \|\psi\|_{C^{1,\b}(S)} |e-\s| \qquad \text{for $\psi \in C^{1,\beta}(S)$} \label{eq:est-psi-1-S}
\end{align}
and 
\begin{align*}
| \L_2(\psi_1,\psi_2, R_\th \s,\th)| &= |(\psi_1(\th) - \psi_1(R_\th \s))| |(\psi_2(\th) - \psi_2(R_\th \s))|\\
&\le  C  \|\psi_1\|_{C^{1,\b}(S)}\|\psi_2\|_{C^{1,\b}(S)} |e-\s|^{2},  
\end{align*}
as claimed in \eqref{eq:estL2-th-sig}. 

Next, we note that, by (\ref{eq:formular-derv-gamma}) we have 
\begin{align*}
\dot{\g}_{\th, R_{\theta} \s}(t) &= \frac{\th-R_\th \s }{\Upsilon(t,\th,R_\th \s)}+\frac{(1-2t)|\th-R_\th \s|^2}{2}\:  \frac{R_\th \s+t( \th-R_\th \s)}{\Upsilon(t,\th,R_\th \s)^3}\\
&= R_\th \left\{ \frac{(e - \s )}{\Upsilon(t,e,\s)}+\frac{(1-2t)|e-\s|^2}{2}\:  \frac{ \s+t(e- \s)}{\Upsilon(t,e, \s)^3}\right\}
= R_\th \dot{\g}_{e,\s}(t),
\end{align*}
since $|\th-R_\th \s|= |R_\th (e-\s)|= |e-\s|$ and, by (\ref{eq:def-Upsilont}),  
$$
\Upsilon(t,\th,R_\th \s)= |R_\th \s +t(\th-R_\th \s)|=|R_\th( \s +t(e- \s))|=|\s + t(e-\s)|=\Upsilon(t,e,\s). 
$$
Consequently, 
\begin{align*}
\bigl|\psi(\theta_1)&-\psi(R_{\theta_1} \s ) - \bigl(\psi(\theta_2)-\psi(R_{\theta_2} \s )\bigr) \bigr|\\
=&   \Bigl| \int_0^1 \left\{ \n \psi (\g_{\th_1, R_{\theta_1} \s}(t)) \cdot \dot{\g}_{\th_1, R_{\theta_1} \s}(t)- 
\n \psi (\g_{\th_2, R_{\theta_2} \s}(t)) \cdot \dot{\g}_{\th_2, R_{\theta_2} \s}(t) \right\}  dt\Bigr|\\ 
=&   \Bigl| \int_0^1 \left\{ \n \psi (\g_{\th_1, R_{\theta_1} \s}(t)) \cdot R_{\th_1} \dot{\g}_{e,\s}(t)- 
\n \psi (\g_{\th_2, R_{\theta_2} \s}(t)) \cdot R_{\th_2} \dot{\g}_{e,\s}(t)  \right\}  dt\Bigr|\\ 
=&   \Bigl| \int_0^1 \Bigl( \n \psi (\g_{\th_1, R_{\theta_1} \s}(t)) -    \n \psi (\g_{\th_2, R_{\theta_2} \s}(t))\Bigr) \cdot R_{\th_1} \dot{\g}_{e,\s}(t) \\
 &+  \n \psi (\g_{\th_2, R_{\theta_2} \s}(t)) \cdot \Bigl( R_{\th_1} \dot{\g}_{e,\s}(t)-  R_{\th_2} \dot{\g}_{e,\s}(t)\Bigr)  dt\Bigr|\\ 
\le&  \|\psi\|_{C^{1,\b}(S)}  \int_0^1 \Bigl( |\g_{\th_1, R_{\theta_1} \s}(t)) - \g_{\th_2, R_{\theta_2} \s}(t))|^\b 
 |\dot{\g}_{e,\s}(t)|+ |(R_{\th_1}-R_{\th_2}) \dot{\g}_{e,\s}(t)|\Bigr)  dt. 
\end{align*}
Hence Lemma~\ref{lem:curve-gamma} and the Lipschitz continuity of the map $\th \mapsto R_\th $ give rise to the estimate
\begin{align}
\bigl|\psi(\theta_1)&-\psi(R_{\theta_1} \s ) - \bigl(\psi(\theta_2)-\psi(R_{\theta_2} \s )\bigr) \bigr|\nonumber\\
\le&  C \|\psi\|_{C^{1,\b}(S)} |e-\s| \Bigl( 
\int_0^1 |\g_{\th_1, R_{\theta_1} \s}(t)) - \g_{\th_2, R_{\theta_2} \s}(t))|^\b \,dt + \|R_{\th_1}- R_{\th_2}\|\Bigr)\nonumber\\
\le&  C \|\psi\|_{C^{1,\b}(S)} |e-\s| \Bigl( 
\int_0^1 \bigl(|\th_1-\th_2| + |R_{\theta_1} \s - R_{\theta_2} \s|\bigr)^\b \,dt + |\th_1- \th_2|\Bigr)\nonumber\\
\le& C \|\psi\|_{C^{1,\b}(S)} |e-\s| |\th_1-\th_2|^\b. \label{eq:est-psi-2-S}
\end{align}
Using (\ref{eq:est-psi-1-S}) and (\ref{eq:est-psi-2-S}), applied with $\psi$ replaced by $\psi_1$ and $\psi_2$, 
we then find that 
\begin{align*}
&| \L_2(\psi_1,\psi_2, R_{\theta_1} \s,\th_1)- \L_2(\psi_1,\psi_2, R_{\theta_2} \s, \th_2)| \\
&=|(\psi_1(\th_1)-\psi_1(R_{\th_1} \s)) (\psi_2(\th_1 )-\psi_2(R_{\th_1} \s))-
(\psi_1(\th_2 )-\psi_1(R_{\th_2} \s)) (\psi_2(\th_2 )-\psi_2(R_{\th_2} \s))|\\
&\leq\Bigl|  \Bigl( \psi_1(\th_1 )-\psi_1(R_{\th_1} \s) - \bigl(\psi_1(\th_2)-\psi_1(R_{\th_2} \s)\bigr)\Bigr)
(\psi_2(\th_1)-\psi_2(R_{\th_1}\s ))\Bigr|\\
&\quad + \Bigl| (\psi_1(\th_2)-\psi_1(R_{\th_2} \s)) \Bigl( \psi_2(\th_1)-\psi_2(R_{\th_1}\s ) - \bigl(\psi_2(\th_2 )-\psi_2(R_{\th_2}\s)\bigr)\Bigr)\Bigr|\\
&\le  \|\psi_1\|_{C^{1,\b}(S)}\|\psi_2\|_{C^{1,\b}(S)} |e-\s|^2 |\th_1-\th_2|^\b,
\end{align*}
as claimed in (\ref{eq:estL2-th1-sig1}).
\end{proof}

The following result provides some estimates related to the kernel $\ov{\cK}_{\a}$ and its derivatives.
\begin{lemma}\label{lem:est-cK}
Let $r>0$, $k \in \N\cup\{0\}$. Then there exists a constant $c=c(N,\a,\b,r,k)>1$ such that  
for all $e\in S$, $\s \in S$, all $\th, \th_1,\th_2 \in  S_e $ and $\psi \in \cB(r)$, we have 
   \be \label{eq:Dk-K-s}
\|  D_{\psi}^k \ov{\cK}_{\a}({\psi},R_\th \s,\theta   )   \|\leq   
{c\left(1+ \|{\psi}\|_{C^{1,\b}(S)} \right)^{c}   }   
 \ee 
 and 
   \be  \label{eq:Dk-K-s_1s_2}
\| D_{\psi}^k \ov{\cK}_\a({\psi}, R_{\th_1} \s,\theta_1) -D_{\psi}^k \ov{\cK}_\a(\psi, R_{\th_2}  \s,\th_2)   \|\leq   
{c\left(1+ \|\psi \|_{C^{1,\b}(S)} \right)^{c}   \, |\th_1-\th_2|^\b } . 
 \ee
\end{lemma}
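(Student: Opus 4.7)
\medskip

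\noindent
\emph{Proof plan.} The key observation is that the kernel admits the factorization
$\ov{\cK}_\a(\psi,\s,\th) = g(T(\psi,\s,\th))$, where $g(t)=t^{-(N+\a)/2}$ is smooth on $(0,\infty)$ and
\[
T(\psi,\s,\th) := \frac{(\psi(\th)-\psi(\s))^2}{|\th-\s|^2} + \psi(\s)\psi(\th)
\]
is, for fixed $\s,\th$, a \emph{quadratic} polynomial in $\psi\in C^{1,\b}(S)$. Hence $D^j_\psi T \equiv 0$ for $j\ge 3$, and only $D^0, D^1, D^2$ in $\psi$ appear when Fa\`a di Bruno \eqref{eq:Faa-de-Bruno} is applied to $g\circ T$.

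First I would evaluate $T$ at $\s=R_\th s$ with $\th\in S_e$, so that $|R_\th s-\th|=|e-s|$ by \eqref{eq:Rota-isom}. Then the seemingly singular factor $|e-s|^{-2}$ in the first term of $T$ is harmless: combining Lemma~\ref{lemma-L-2-est-sec:regul-nonl-mean} (which gives $[\psi;R_\th s,\th]^2\le C\|\psi\|_{C^{1,\b}}^2|e-s|^2$) with the trivial bound on $\psi(R_\th s)\psi(\th)$ yields
\[
r^2 \;\le\; T(\psi,R_\th s,\th) \;\le\; C\bigl(1+\|\psi\|_{C^{1,\b}(S)}\bigr)^2,
\]
the lower bound coming from $\psi\in\cB(r)$. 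The same lemma, applied now with $\psi_1=\psi_2=\phi_1$ or $(\psi_1,\psi_2)=(\phi_1,\phi_2)$, shows
\[
|D_\psi T(\psi,R_\th s,\th)\phi_1|+|D_\psi^2 T(\psi,R_\th s,\th)[\phi_1,\phi_2]|
\;\le\; C\bigl(1+\|\psi\|_{C^{1,\b}}\bigr)\prod_i\|\phi_i\|_{C^{1,\b}}.
\]
On the other hand, since $g^{(j)}(t)=c_j t^{-(N+\a)/2-j}$ is monotone decreasing in $|g^{(j)}|$, the lower bound $T\ge r^2$ gives $|g^{(j)}(T)|\le C(j,r)$ uniformly. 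Plugging these into \eqref{eq:Faa-de-Bruno} yields \eqref{eq:Dk-K-s}.

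For the H\"older-in-$\th$ estimate \eqref{eq:Dk-K-s_1s_2}, I would show separately that the pieces entering Fa\`a di Bruno are H\"older of exponent $\b$ in $\th$. For the inner quantities, the second halves of Lemma~\ref{lemma-L-2-est-sec:regul-nonl-mean} give
\[
|T(\psi,R_{\th_1}s,\th_1)-T(\psi,R_{\th_2}s,\th_2)|+\sum_{j=1,2}|D_\psi^j T|_{\th_1}-D_\psi^j T|_{\th_2}|\bigr|
\;\le\; C\bigl(1+\|\psi\|_{C^{1,\b}}\bigr)^2\prod_i\|\phi_i\|_{C^{1,\b}}|\th_1-\th_2|^\b,
\]
after handling the non-singular terms $\psi(R_\th s)\psi(\th)$ (and its $D_\psi$-derivatives) via the Lipschitz continuity of $\th\mapsto R_\th$ \eqref{eq:R-Lipschitz} and the $C^{1,\b}$ regularity of $\psi,\phi_i$, noting $|\th_1-\th_2|\le|\th_1-\th_2|^\b$ on the relevant bounded range. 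For the outer factors $g^{(|\Pi|)}(T)$ I would use the elementary mean-value bound $|g^{(|\Pi|)}(T_1)-g^{(|\Pi|)}(T_2)|\le \|g^{(|\Pi|+1)}\|_{L^\infty([r^2,\infty))}|T_1-T_2|$, which is finite precisely because $T\ge r^2$. A finite product of bounded and $C^\b$-in-$\th$ terms is itself bounded and $C^\b$-in-$\th$ with the claimed polynomial dependence on $\|\psi\|_{C^{1,\b}}$.

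The only real obstacle is the cancellation in the singular factor: without exploiting that the numerators of $T$ and its $\psi$-derivatives are \emph{differences} vanishing with $|e-s|$ of the right order, no uniform estimate is possible. This is exactly where Lemma~\ref{lemma-L-2-est-sec:regul-nonl-mean} is indispensable; once that quadratic-vanishing is in hand, the argument reduces to careful bookkeeping with the Fa\`a di Bruno formula and the fact that $T$ is polynomial of degree $2$ in $\psi$.
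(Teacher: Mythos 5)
Your proposal is correct and follows essentially the same route as the paper: the factorization $\ov{\cK}_\a=g_\a(Q)$ with $Q$ quadratic in $\psi$, the use of Lemma~\ref{lemma-L-2-est-sec:regul-nonl-mean} together with \eqref{eq:Rota-isom} to cancel the factor $|e-\s|^{-2}$, the lower bound $Q\ge r^2$ from $\psi\in\cB(r)$ to control $g_\a^{(\ell)}$ and its mean-value differences, and Fa\`a di Bruno restricted to partitions with blocks of size at most two. The paper's proof is precisely this bookkeeping, so no changes are needed.
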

\begin{proof}
Throughout this proof, the letter $c$ stands for different constants greater than one and depending only on 
$N,\a,\b,k$ and $r$. We assume $k\geq 1$ for notation coherence, but the case $k=0$ is simpler and can be proved
similarly. We define 
$$
Q:  C^{1,\b}(S)\times S\times  S  \to \R,\,\, Q(\psi,\s,\th)=\frac{|\psi(\th)-\psi(\s)|^2}{|\th-\s|^2 }  + \psi(\th) \psi(\s) = \frac{\Lambda_2(\psi,\psi,\sigma,\theta)}{|\th-\s|^2 }+ \psi(\th) \psi(\s)
$$
and, for $\a>0$, 
$$
g_\a \in C^\infty(\R_+, \R), \qquad 
g_\a(x)= x^{-(N+\a)/2},
$$
so that 
\begin{align} \label{eq:def-ti-cK-ell}
\ov{\cK}_\a(\psi,R_\th\s,\th)&=g_\a\left(  Q(\psi,R_\th\s,\th)  \right).
\end{align}
Note that for $\th \in S_e$ we have 
\begin{align*}
Q(\psi,R_\th \s,\th)&=\frac{\Lambda_2(\psi,\psi,R_\th \sigma,\theta)}{|e-\s|^2 } + \psi(R_\th\s) \psi(\th)\\
D_\psi Q(\psi, R_\th \s, \th)\psi_1 &= 2 \frac{\Lambda_2(\psi,\psi_1,R_\th \sigma,\theta)}{|e-\s|^2 }
 + \psi(\th) \psi_1(R_\th \s)+\psi_1(\th) \psi(R_\th \s)\\
D_\psi^2 Q(\psi,R_\th \s,\th)[\psi_1,\psi_2] &=  2 \frac{\Lambda_2(\psi_1,\psi_2,R_\th \sigma,\theta)}{|e-\s|^2 }
 + \psi_2(\th) \psi_1(R_\th \s) + \psi_1(\th) \psi_2(R_\th \s)
\end{align*}
for $\psi, \psi_1,\psi_2 \in C^{1,\beta}(S)$. For a subset $P \subset \{1,2\}$ and $\psi_1,\psi_2 \in C^{1,\beta}(S)$, we thus have, by \eqref{eq:prod-bracket} and~(\ref{eq:estL2-th1-sig1}),  
 \begin{equation}\label{eq:DP-Q-s_1s_2}
\bigl|[D^{|P|}_{\psi} Q (\psi,R_\th\s,\cdot)[\psi_j]_{j \in P};\theta_1,\th_2]\bigr|  \leq  c (1+ \|\psi\|_{C^{1,\b}(S)}^2)  |\theta_1-\th_2|^\b \prod_{j \in P} \|\psi_j\|_{C^{1, \b}(S)}.  
\end{equation}  
Moreover, by \eqref{eq:estL2-th-sig}, 
 \begin{equation}\label{eq:DP-Q-s}
\bigl|D^{|P|}_{\psi} Q (\psi,R_\th\s,\cdot)[\psi_j]_{j \in P}\bigr|  \leq  c (1+ \|\psi\|_{C^{1,\b}(S)}^2)  
\prod_{j \in P} \|\psi_j\|_{C^{1, \b}(S)} \qquad \text{on $S_e$.}  
\end{equation}  

By \eqref{eq:Faa-de-Bruno} and recalling that $Q$  is quadratic in $\psi$,  we have 
 $$
 D_{\psi}^k \ov{\cK}_\a(\psi,R_\th\s,\th)[\psi_1,\dots,\psi_k]= \sum_{\Pi\in\scrP_k^2}  g_\a^{(\left|\Pi\right| )  } ( Q(\psi,R_\th\s,\th)   ) \prod_{P\in\Pi} D^{\left|P\right| }_{\psi} Q(\psi,R_\th\s,\th)[\psi_j]_{j \in P},
$$
where $\scrP_k^2$ denotes the set of partitions $\Pi$ of $\{1,\dots,k\}$ such that $|P| \le 2$ for every $P \in \Pi$. By \eqref{eq:prod-bracket}, we now have  
 \begin{align}
 \label{eq:Dk-K-s_1s_2-1}
& \left[ D_{\psi}^k \ov{\cK}_\a(\psi, R_\th\s,\cdot )[\psi_1,\dots,\psi_k] ;\th_1,\th_2\right]\\
 &= \sum_{\Pi\in\scrP_k^2}  \left[  g^{(\left|\Pi\right| )  }_\a  ( Q(\psi, R_\th\s, \cdot) ); \th_1,\th_2 \right] \prod_{P\in\Pi, |P|\leq 2} D^{\left|P\right| }_{\psi} Q(\psi,R_\th\s,\theta_1 )[\psi_j]_{j \in P}\nonumber\\
 &\quad+\sum_{\Pi\in\scrP_k^2}    g^{(\left|\Pi\right| )  }_\a  ( Q(\psi,R_\th\s,\theta_2)   ) \Bigl[     \prod_{P\in\Pi, |P|\leq 2} D^{\left|P\right| }_{u} Q(\psi,  R_\th\s,\cdot )[\psi_j]_{j \in P}  ;\theta_1,\th_2 \Bigr],\nonumber
 \end{align}
whereas 
$$
  g^{(k)}_\a(t)=(-1)^{k}2^{-k} \prod_{i=0}^{k-1 } (N+\a +2 i) t^{-\frac{N+\a+2k}{2}} \qquad \text{for $k \in \N $ and $t>0$.}
$$
Consequently,  by \eqref{eq:DP-Q-s_1s_2}  and since $\psi \in \cB(r)$, we have the estimates
\begin{align}
&\left| \left[ g^{({\ell})}_\a  ( Q(\psi,R_\th\s, \cdot  ) ) ;\th_1,\th_2\right] \right| \nonumber\\
&\le \left|\left[ Q(\psi,R_\th\s, \cdot    ) ;\th_1,\th_2\right]  \int_0^1  g^{({\ell}+1)}_\a  (\tau Q(\psi,R_\th\s,\th_1 )+(1-\tau)Q(\psi,R_\th\s,\th_2 ))  d\tau \right|   \nonumber\\
&\leq  c\left(1 + \|\psi\|_{C^{1,\b}(S)}^2\right)\,    \, |\th_1-\th_2|^\b   \label{eq:gQ-s1-s2}
\end{align}   
and 
\be  
\label{eq:gQ-s}
|   g^{({\ell})  }_\a  ( Q(\psi ,R_\th\s, \cdot     )) |\leq   c\left(1+ \|\psi\|_{C^{1,\b}(S)}\right)^{c}
\ee
for ${\ell}=0,\dots,k$. 
Combining  \eqref{eq:DP-Q-s_1s_2}, \eqref{eq:DP-Q-s}, \eqref{eq:Dk-K-s_1s_2-1}, \eqref{eq:gQ-s1-s2} and \eqref{eq:gQ-s}, we obtain
 \begin{align*}
\Bigl | \Bigl[ D_{\psi}^k \ov{\cK}_\a(\psi, R_\th\s, \cdot )[\psi_1,\dots,\psi_k] ;&\th_1,\th_2\Bigr] \Bigr|\\
&\leq    
c  \Bigl(1+ \|\psi\|_{C^{1,\b}(S)} \Bigr)^{c}|\th_1-\th_2|^\b \! \sum_{\Pi\in\scrP_k^2}\!     
 \prod_{P \in \Pi}   \prod_{j \in P} \|\psi_j\|_{C^{1,\b}(S)}\\  
&\le    c\Bigl(1+ \|\psi\|_{C^{1,\b}(S)} \Bigr)^{c}   \, |\th_1-\th_2|^\b     \prod_{i=1}^k 
\|\psi_i\|_{C^{1, \b}(S)}.
\end{align*}
This yields \eqref{eq:Dk-K-s_1s_2}. Furthermore we easily deduce from \eqref{eq:DP-Q-s} and \eqref{eq:gQ-s}    that 
$$
\left|  D_{\psi}^k \ov{\cK}_\a(\psi, R_\th\s ,\th    )[\psi_1,\dots,\psi_k]   \right|\leq   
 {c\left(1+ \|\psi\|_{C^{1,\b}(S)} \right)^{c}   }   \prod_{i=1}^k \|\psi_i\|_{C^{1, \b}(S)},
$$
completing the proof.
\end{proof}

We now derive estimates for functions of a specific form which will appear in formulas for the derivatives of the transformed NMC operator $\tilde h$ in (\ref{eq:def-ti-h}).
\begin{lemma}\label{lem:est-cand-deriv}  
Let ${\psi } \in \cB(r)$, with $r>0$.  Let   $k\in \N $, $e\in S$, $\Psi\in C^{1,\b}(S)$ and 
$\o,\o_1, {\psi}_1,\dots, {\psi}_k \in C^{1,\b}(S)$.   Define the functions $\cF_1, \cF_2, \cF_3: S_e \to \R$ by 
\begin{align*}
\cF_1(\th )&=\int_{{S}}\frac{ \L_1(\o,R_\th \s, \th) }{ |e-\s|^{N+\alpha}}  
\Psi(R_\th \s)\, D_\psi^k\ov{\cK}_{\a}(\psi,R_\th \s,\th)[\psi_i]_{i=1,\ldots,k}      \,  dV(\s),
\end{align*}
\begin{align*}
\cF_2(\th )&=\int_{{S}}\frac{ \L_2(\o,\o_1,R_{\th} \s,\th) }{ |e-\s|^{N+\alpha}} \Psi(R_\th \s)\, 
D_\psi^k\ov{\cK}_{\a}(\psi,R_\th\s,\th) [\psi_i]_{i=1,\ldots,k}     \,  dV(\s)
\end{align*}
and 
\begin{align}
\cF_3(\th )&=\int_{{S}}\frac{\Psi(R_\th \s)}{ |e-\s|^{N+\alpha-2}}   D_\psi^k\ov{\cK}_{\a}(\psi,R_\th \s,\th)
[\psi_i]_{i=1,\ldots,k}      \,  dV(\s) .
\end{align}
 Then, there exists a constant  $c=c(N,\a,\b,k,r )>1$ such that 
\begin{align}\label{eq:est-cF1}
\|{\cF_1} \|_{C^{\b-\a}(S_e)} & \leq  c\left(1+ \|{\psi }\|_{C^{1,\b}(S)} \right)^{c}  \|\o\|_{C^{1,\b}(S)}   
\|\Psi\|_{C^{1,\b}(S)}   \prod_{i=1}^k \|{\psi}_i\|_{C^{1, \b}(S)} ,
\end{align}
\begin{align}\label{eq:est-cF2}
\|{\cF_2} \|_{C^{\b}(S_e)} & \leq  c\left(1+ \|{\psi }\|_{C^{1,\b}(S)}  \right)^{c}   \|\o\|_{C^{1,\b}(S)} \|\o_1\|_{C^{1,\b}(S)}
\|\Psi\|_{C^{1,\b}(S)}   \prod_{i=1}^k \|{\psi}_i\|_{C^{1, \b}(S)}
\end{align}
and
\begin{align}\label{eq:est-cF3}
\|{\cF_3} \|_{C^{\b}(S_e)} & \leq  c\left(1+ \|{\psi}\|_{C^{1,\b}(S)} \right)^{c}   \|\Psi\|_{C^{1,\b}(S)}   
\prod_{i=1}^k \|{\psi}_i\|_{C^{1, \b}(S)}   .
\end{align}
\end{lemma}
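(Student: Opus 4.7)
The plan is to establish all three estimates by a common two-step scheme: first bound the sup norm via pointwise control of the integrand together with integrability of the singular kernel at $\sigma = e$, then bound the Hölder seminorm by a near/far decomposition of the integral around $\sigma = e$. Throughout, I will combine the pointwise and Hölder estimates from Corollary~\ref{cor-L_1-est-sec:regul-nonl-mean-1}, Lemma~\ref{lemma-L-2-est-sec:regul-nonl-mean} and Lemma~\ref{lem:est-cK} with the bracket product rule \eqref{eq:prod-bracket} and the Lipschitz continuity of $\theta \mapsto R_\theta$ from \eqref{eq:R-Lipschitz}.

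For the sup-norm bounds, I multiply the pointwise estimates $|\Lambda_1(\omega,R_\theta\sigma,\theta)| \le C\|\omega\|_{C^{1,\beta}}|e-\sigma|^{1+\beta}$ and $|\Lambda_2(\omega,\omega_1,R_\theta\sigma,\theta)| \le C\|\omega\|_{C^{1,\beta}}\|\omega_1\|_{C^{1,\beta}}|e-\sigma|^2$, the trivial bound on $\Psi\circ R_\theta$, and \eqref{eq:Dk-K-s}. The integrands are then dominated by $C|e-\sigma|^{1+\beta-N-\alpha}$ in the $\cF_1$ case and by $C|e-\sigma|^{2-N-\alpha}$ in the $\cF_2$ and $\cF_3$ cases, both integrable on $S$ thanks to $\beta>\alpha$ and $\alpha\in(0,1)$. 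The polynomial dependence on $\|\psi\|_{C^{1,\beta}}$ enters only through Lemma~\ref{lem:est-cK}.

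For the Hölder seminorm, fix $\theta_1,\theta_2 \in S_e$ and set $\delta := |\theta_1-\theta_2|$. Writing $\cF_i(\theta_1)-\cF_i(\theta_2)$ as a single integral in $\sigma$ and applying \eqref{eq:prod-bracket} to the three $\theta$-dependent factors (the $\Lambda_j$ term, $\Psi(R_\theta\sigma)$, and $D_\psi^k \ov{\cK}_\alpha(\psi,R_\theta\sigma,\theta)$), I split the integration into a far regime $|e-\sigma|\ge 4\delta$ and a near regime $|e-\sigma|<4\delta$. In the far regime I use the Hölder estimates \eqref{eq:estL1-th1-sig1}, \eqref{eq:estL2-th1-sig1}, \eqref{eq:Dk-K-s_1s_2}, together with $|\Psi(R_{\theta_1}\sigma)-\Psi(R_{\theta_2}\sigma)|\le C\|\Psi\|_{C^{1,\beta}}\delta$ derived from \eqref{eq:R-Lipschitz}; the resulting $\sigma$-integrals are bounded by $C\delta^\beta$ or $C\delta$ since $\int_S |e-\sigma|^{1-N-\alpha}dV$, $\int_S |e-\sigma|^{\beta-N-\alpha}dV$ and $\int_S |e-\sigma|^{2-N-\alpha}dV$ are all finite. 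In the near regime I apply the separate pointwise bounds on each factor at $\theta_1$ and $\theta_2$, so that the integral over $\{|e-\sigma|<4\delta\}$ contributes at most $C\delta^{1+\beta-\alpha}$ for $\cF_1$ and $C\delta^{2-\alpha}$ for $\cF_2,\cF_3$. Adding the two regimes, the total Hölder differences are bounded by $C(\delta^\beta+\delta+\delta^{1+\beta-\alpha})\le C\delta^{\beta-\alpha}$ for $\cF_1$ and by $C(\delta^\beta+\delta+\delta^{2-\alpha})\le C\delta^\beta$ for $\cF_2$ and $\cF_3$, the last inequalities using $\delta$ bounded together with $\beta\le 1$, $\beta-\alpha\le \beta$ and $\beta\le 2-\alpha$.

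The main obstacle is the bookkeeping for $\cF_1$, whose singular kernel $|e-\sigma|^{1+\beta-N-\alpha}$ is only marginally integrable: each of the three bracket contributions must be shown to produce at least a power $\delta^{\beta-\alpha}$. The restriction under which the sharper bounds \eqref{eq:estL1-th1-sig1}, \eqref{eq:estL2-th1-sig1} hold -- namely that the arguments lie at mutual distance $<1$ -- is automatic near $\sigma=e$ via the isometry $|R_\theta\sigma-\theta|=|e-\sigma|$ from \eqref{eq:Rota-isom}, while for $|e-\sigma|\gtrsim 1$ trivial $L^\infty$ bounds on $\Lambda_1,\Lambda_2$ and their differences suffice and yield contributions already controlled by a constant times $\delta$ (hence absorbed by the $L^\infty$ bound on $\cF_i$).
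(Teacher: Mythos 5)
Your overall scheme -- the bracket product rule, the estimates of Corollary~\ref{cor-L_1-est-sec:regul-nonl-mean-1}, Lemma~\ref{lemma-L-2-est-sec:regul-nonl-mean} and Lemma~\ref{lem:est-cK}, and a near/far splitting at $|e-\sigma|\sim|\th_1-\th_2|$ -- is essentially the paper's, but the quantitative core is wrong as written. Since $S$ is $(N-1)$-dimensional, near $\s=e$ one has $dV(\s)\approx |e-\s|^{N-2}\,d|e-\s|$, so $\int_S|e-\s|^{1-N-\a}dV(\s)$ and $\int_S|e-\s|^{\b-N-\a}dV(\s)$ behave like $\int_0 t^{-1-\a}dt$ and $\int_0 t^{\b-\a-2}dt$ and are \emph{both divergent}; only $\int_S|e-\s|^{2-N-\a}dV(\s)$ is finite. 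Consequently your far-regime contributions for $\cF_1$ are not $O(\d^\b)+O(\d)$: computed over $\{|e-\s|\ge 4\d\}$ they are $\d^\b\cdot O(\d^{-\a})+\d\cdot O(\d^{\b-\a-1})=O(\d^{\b-\a})$, which is exactly the paper's computation with $\min\bigl(|e-\s|^{1+\b},\mu(\s,\th_1,\th_2)\bigr)$ split at $|e-\s|=|\th_1-\th_2|$, and it is precisely why $\cF_1$ only lands in $C^{\b-\a}$. Your intermediate bound $C(\d^\b+\d+\d^{1+\b-\a})$ therefore rests on false finiteness claims, even though the exponent $\b-\a$ is still reachable with the split you set up.

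The same dimensional miscount affects your near-regime bounds: $\int_{|e-\s|<4\d}|e-\s|^{1+\b-N-\a}dV(\s)\sim\d^{\b-\a}$ (not $\d^{1+\b-\a}$) and $\int_{|e-\s|<4\d}|e-\s|^{2-N-\a}dV(\s)\sim\d^{1-\a}$ (not $\d^{2-\a}$). For $\cF_1$ this is harmless, but for $\cF_2$ and $\cF_3$ the corrected near contribution $\d^{1-\a}$ is \emph{not} bounded by $C\d^{\b}$ when $\a+\b>1$, so your scheme as described does not yield the stated $C^{\b}(S_e)$ bounds in that parameter range. The fix is what the paper does: for $\cF_2$ and $\cF_3$ no splitting is needed at all, because \eqref{eq:estL2-th1-sig1}, \eqref{eq:Dk-K-s_1s_2} and the Lipschitz bound on $\th\mapsto\Psi(R_\th\s)$ give bracket bounds of size $\d^{\b}$ (or $\d$) multiplying the kernel $|e-\s|^{2-N-\a}$, which is integrable over all of $S$; only $\cF_1$ requires the near/far (equivalently, the $\min$) argument, carried out with the cutoff-dependent integrals above rather than with global finiteness.
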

\begin{proof}
Let $\th_1,\th_2 \in S_e$. We first note that, for $\s \in S$,
\begin{equation}
  \label{easy-est-G}
|\Psi(R_{\th_1} \s)- \Psi(R_{\th_2}\s)| \le C \|\Psi\|_{C^{1,\b}(S)} |R_{\th_1} \s - R_{\th_2} \s| \le C
\|\Psi\|_{C^{1,\b}(S)}   |\th_1-\th_2|.
\end{equation}
To prove estimate (\ref{eq:est-cF1}) for $\cF_1$, we recall that by \eqref{eq:estL1-th1-sig1} we have 
$$
| {\L}_1(\o ,R_{\theta_1}  \s, \theta_1)- {\L}_1(\o,R_{\theta_2}  \s, \theta_2)|\le C  \|\o\|_{C^{1,\b}(S)} \mu(\s,\theta_1,\theta_2),
$$ 
where
$$
\mu(\s,\theta_1,\theta_2):= |e - \s|  |\th_1-\th_2|^\beta + |e - \s|^\beta  |\th_1-\th_2|.
$$ 
Combining this with the fact that 
$$
| {\L}_1(\o ,R_{\theta}  \s, \theta)| \le C\|\o \|_{C^{1,\b}(S)}  \,|R_{\theta} \s- \theta|^{1+\beta} = 
C\|\o \|_{C^{1,\b}(S)}  \,|e-\s|^{1+\beta}\qquad \text{for $\s\in S, \th \in S_e$}
$$
 by \eqref{eq:estL1-th-sig}, we find that 
   \begin{align}
| {\L}_1(\o ,R_{\theta_1}  \s, \theta_1)- {\L}_1(\o,R_{\theta_2}  \s, \theta_2)|& \leq 
C  \|\o\|_{C^{1,\b}(S)} \min(|e-\s|^{1+\beta},\mu(\s,\th_1,\th_2)  ) .
\label{eq:estL1-x1-x2}
\end{align}

Using inductively \eqref{eq:prod-bracket} together with Lemma \ref{lem:est-cK}, (\ref{easy-est-G}) and \eqref{eq:estL1-x1-x2},  we get the estimate
\begin{align}
&|[\cF_1;\th_1,\th_2 ]|  \leq c\left(1+ \|\psi \|_{C^{1,\b}(S)} \right)^c \|\o \|_{C^{1,\b}(S)}\|\Psi\|_{C^{1,\b}(S)}  
 \prod_{i=1}^k \|\psi_i\|_{C^{1,\b}(S)}   \:\times \label{intermed-est-F_1}\\
&\left(    |\th_1-\th_2|^\b\!\!\int_{S}|e-\s |^{1+\b-N-\a}dV(\sigma) + \! \int_{S}\!
\frac{\min(|e-\s|^{1+\b},\mu(\s,\theta_1,\theta_2)  )}{|e-\s|^{N+\a}} dV(\sigma)\right)\nonumber
\end{align}
for all $\th_1,\th_2\in S$. Since
\begin{align*}
& \int_{S}\frac{\min(|e-\s|^{1+\b},\mu(\s,\theta_1,\theta_2)  )}{|e-\s|^{N+\a}} dV(\sigma) \\
&\leq  \int_{ |e-\s| \leq |\th_1-\th_2|}    |e-\s|^{1+\b -N-\a} dV(\sigma)
+   \int_{  |\th_1-\th_2|\leq |e-\s|} \frac{\mu(\s,\theta_1,\theta_2)}{|e-\s|^{N+\a}} dV(\sigma)\\
&\leq  \int_{ |e-\s| \leq |\th_1-\th_2|}    |e-\s|^{1+\b -N-\a} dV(\sigma)\\
&\hspace{1cm} +   \int_{  |\th_1-\th_2|\leq |e-\s|} \{
|\th_1-\th_2|^\beta |e-\s|^{1-N-\a} + |\th_1-\th_2| |e-\s|^{\b-N-\a}\} dV(\sigma)\\
& \leq C   |\th_1-\th_2|^{\b-\a},
\end{align*}
we thus deduce from (\ref{intermed-est-F_1}) that  
\begin{align*}
|[\cF_1;\th_1,\th_2 ]| &  \leq c |\th_1-\th_2|^{\b-\a} \left(1+ \|\psi \|_{C^{1,\b}(S)} \right)^c
\|\o \|_{C^{1,\b}(S)}  
\|\Psi\|_{C^{1,\b}(S)}  \prod_{i=1}^k \|\psi_i\|_{C^{1,\b}(S)}.  
\end{align*}
Since, by a similar but easier argument, we have 
$$
|\cF_1| \leq c \left(1+ \|\psi \|_{C^{1,\b}(S)} \right)^c\|\o \|_{C^{1,\b}(S)}  
\|\Psi\|_{C^{1,\b}(S)}  \prod_{i=1}^k \|\psi_i\|_{C^{1,\b}(S)} \qquad \text{on $S_e$,}
$$  
\eqref{eq:est-cF1} follows. 

Next we consider $\cF_2$. For this  we recall that 
$$
|\L_2(\o,\o_1,R_\theta \s,\th) |\leq C \|\o\|_{C^{1,\b}(S)}\|\o_1\|_{C^{1,\b}(S)}\,  |e-\s|^{2}   
$$
by \eqref{eq:estL2-th-sig}, and that   
$$
|\L_2(\o,\o_1,R_{\theta_1} \s , \theta_1)-\L_2(\o,\o_1,R_{\theta_2} \s , \theta_2) | 
\leq  C \|\o_1\|_{C^{1,\b}(S)}\|\o\|_{C^{1,\b}(S)}|e-\s|^2 |\theta_1-\theta_2|^\b
$$
by (\ref{eq:estL2-th1-sig1}). Consequently, we find that 
\begin{align} \label{F-2-first-est}
|[\cF_2;&\th_1,\th_2 ]|  \leq c\left(1+ \|\psi \|_{C^{1,\b}(S)} \right)^c\|\o \|_{C^{1,\b}(S)}\|\o_1 \|_{C^{1,\b}(S)} 
\|\Psi\|_{C^{1,\b}(S)} \prod_{i=1}^k \|\psi_i\|_{C^{1,\b}(S)}
   \nonumber\\
& \qquad \times |\theta_1-\theta_2|^\b   \!\int_{ S }|e-\s |^{2-N-\a}dV(\sigma)   \\
& \leq c\left(1+ \|\psi \|_{C^{1,\b}(S)} \right)^c\|\o \|_{C^{1,\b}(S)}\|\o_1 \|_{C^{1,\b}(S)}  \|\Psi\|_{C^{1,\b}(S)} 
|\th_1-\th_2|^{\b}  
 \prod_{i=1}^k \|\psi_i\|_{C^{1,\b}(S)}  .\nonumber
\end{align}
Moreover, by a similar but easier argument,
\be
\label{F-2-second-est} 
|\cF_2| \leq c\left(1+ \|\psi \|_{C^{1,\b}(S)} \right)^c\|\o \|_{C^{1,\b}(S)}  \|\o_1 \|_{C^{1,\b}(S)}  
 \|\Psi\|_{C^{1,\b}(S)} \prod_{i=1}^k \|\psi_i\|_{C^{1,\b}(S)}   \qquad \text{on $S_e$.}
\ee
Combining (\ref{F-2-first-est}) and \eqref{F-2-second-est}, we get \eqref{eq:est-cF2}, as claimed. We skip the proof 
of \eqref{eq:est-cF3}, which is similar but easier.
\end{proof}

The following results contains all what is needed to prove the regularity of $\ti{h}$ and hence of~$h$.
 \begin{proposition}\label{prop:smooth-ovH-0}
Let $k \in \N \cup \{0\}$, $r>0$, $\psi_1,\dots,\psi_k \in C^{1,\beta}(S)$ and let $\cM_i: S \to \R$ with $i=1,2,3$, be defined by 
\begin{align*}
\cM_i(\th )&= \int_{S }   D^k_\psi {M_i}({\psi} , \s,\th  )[\psi_1,\dots,\psi_k]\,dV(\s),
\end{align*}
where $M_1,M_2,M_3 :   \cB(r) \times S\times S    \to \R$  are given by 
\begin{align*}
M_1({\psi} ,\s,\th)&=  \frac{\L_1 ({\psi}, \s,\th)}{|\th-\s|^{N+\a}}\ov{\cK}_\a({\psi}, \s,\th)   \, {\psi}^{N-2}(\s), \\
M_2({\psi} ,\s,\th)&=  \frac{\L_2 ({\psi}, \psi, \s,\th)}{|\th-\s|^{N+\a}} \ov{\cK}_\a({\psi}, \s,\th)   \, {\psi}^{N-2}(\s), \\
{M_3}({\psi} , \s,\th)&=  \frac{1 }{|\th-\s|^{N+\a-2}} \ov{\cK}_\a({\psi}, \s,\th)   \, {\psi}^{N-1}(\s).    
\end{align*}

Then, for $i=1,2,3$,  $\cM_i  \in C^{\beta-\a}(S)$, and there exists a constant $c=c(N,\a,\b,k,r)>1$ such that 
\begin{equation}
  \label{eq:est-M1}
\|\cM_i\|_{C^{\beta-\alpha}(S)}  \leq  c(1+ \|{\psi}\|_{C^{1,\b}(S)} )^{c}   \prod_{i=1}^k \|{\psi}_i\|_{C^{1, \b}(S)},
\end{equation}
understanding that the last product equals $1$ if $k=0$.
\end{proposition}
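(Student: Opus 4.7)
The plan is to reduce each integral $\cM_i(\theta)$, after a $\theta$-dependent change of variables, to a sum of quantities of the three forms $\cF_1, \cF_2, \cF_3$ treated in Lemma~\ref{lem:est-cand-deriv}, and then to patch the resulting local estimates on hemispheres into a global one on $S$.

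First, I fix $e \in S$ and use the Lipschitz family of rotations $\theta \mapsto R_\theta$ from Remark~\ref{example-rotations}, for which $R_\theta e = \theta$ on $S_e$ and consequently $|\theta - R_\theta \sigma| = |e-\sigma|$ by \eqref{eq:Rota-isom}. Because the volume element on $S$ is rotation invariant, for every $\theta \in S$ we may rewrite
\begin{equation*}
\cM_i(\theta) \;=\; \int_S D^k_\psi M_i(\psi, R_\theta \sigma, \theta)[\psi_1,\ldots,\psi_k]\, dV(\sigma).
\end{equation*}
For $\theta \in S_e$ the integrand has a singularity only at the $\theta$-independent point $\sigma = e$, which is exactly the setting of Lemma~\ref{lem:est-cand-deriv}.

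Next, I expand the Fr\'echet derivative $D^k_\psi M_i$ by combining the Leibniz formulas \eqref{eq:Dk-T1T2}--\eqref{eq:Dk-QT2} (applied to the three factors constituting each $M_i$) with the Fa\`a de Bruno formula \eqref{eq:Faa-de-Bruno} to handle $\bar{\cK}_\alpha$. Since $\Lambda_1$ is linear in $\psi$, $\Lambda_2$ is bilinear, and $\psi^{N-1}(\sigma), \psi^{N-2}(\sigma)$ are polynomials in $\psi(\sigma)$, the expansion produces a finite sum (with combinatorial coefficients bounded in terms of $N$ and $k$) of terms each of which, after pulling back to $S_e$ by $R_\theta$, is exactly one of the expressions $\cF_1, \cF_2, \cF_3$ of Lemma~\ref{lem:est-cand-deriv}. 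Concretely: every term coming from $M_1$ contains either $\Lambda_1(\psi_j, R_\theta\sigma, \theta)$ (giving an $\cF_1$) or no $\Lambda$-factor at all once $\Lambda_1$ is differentiated away (giving an $\cF_3$, after one writes $\psi^{N-2}(R_\theta\sigma)/|e-\sigma|^{N+\alpha} = \psi^{N-2}(R_\theta\sigma)|e-\sigma|^{2-N-\alpha}/|e-\sigma|^{2}$ and absorbs the $|e-\sigma|^{-2}$ into a new $\cK$-type factor, or more cleanly, into an $\cF_2$-type template). Terms coming from $M_2$ are of $\cF_2$-type (when both factors of $\psi$ in $\Lambda_2$ survive) or $\cF_1$-type (when one is differentiated). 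Terms from $M_3$ are of $\cF_3$-type. In every case, Lemma~\ref{lem:est-cand-deriv} yields
\begin{equation*}
\|\cM_i\|_{C^{\beta-\alpha}(S_e)} \;\le\; c(1+\|\psi\|_{C^{1,\beta}(S)})^{c} \prod_{j=1}^k \|\psi_j\|_{C^{1,\beta}(S)},
\end{equation*}
with $c$ independent of $e$, noting that the stronger $C^\beta$ bound from \eqref{eq:est-cF2}--\eqref{eq:est-cF3} is available for $\cM_2, \cM_3$ and embeds into $C^{\beta-\alpha}$.

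Finally, I cover $S$ by finitely many hemispheres $S_{e_1},\ldots,S_{e_m}$ (e.g.\ $\pm$ the standard basis of $\R^N$, so $m=2N$). For any $\theta_1,\theta_2 \in S$, either $|\theta_1-\theta_2|$ is bounded below by a universal constant (and then the H\"older difference quotient is controlled by twice the sup bound) or $\theta_1,\theta_2$ both lie in a common $S_{e_j}$ so that the hemisphere estimate applies directly; combined with the uniform $L^\infty(S)$ bound obtained from the same argument, this yields the global $C^{\beta-\alpha}(S)$ estimate \eqref{eq:est-M1}. The main obstacle is the Leibniz/Fa\`a de Bruno bookkeeping: one must verify systematically that every term produced by differentiating $k$ times the products $\Lambda_\cdot \cdot \bar{\cK}_\alpha \cdot \psi^{N-j}$ matches one of the three templates of Lemma~\ref{lem:est-cand-deriv}, including those mixed terms in which differentiating $\Lambda_2$ consumes one of the $\psi_j$'s or terms in which differentiating $\psi^{N-j}(\sigma)$ merely shifts the exponent; the uniform in $e$ character of the constants then makes the covering argument routine.
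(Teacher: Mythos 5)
Your strategy is essentially the paper's: fix $e\in S$, use the Lipschitz family of rotations $\th\mapsto R_\th$ to move the singularity to the fixed point $\s=e$, expand $D^k_\psi M_i$ by the product formulas \eqref{eq:Dk-T1T2}, \eqref{eq:Dk-LT2}, \eqref{eq:Dk-QT2}, recognize each resulting term as one of the templates $\cF_1,\cF_2,\cF_3$ of Lemma~\ref{lem:est-cand-deriv}, and conclude from the uniformity in $e$ of the constants (the paper leaves the final patching over $S$ implicit; your hemisphere covering makes it explicit and is fine, as is quoting Fa\'a di Bruno, which in the paper is already packaged inside Lemma~\ref{lem:est-cK}).

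The one place where your bookkeeping goes astray is the classification of terms: you allow terms from $M_1$ with ``no $\Lambda$-factor at all'' (to be handled as $\cF_3$-type after absorbing $|e-\s|^{-2}$ into the kernel) and terms from $M_2$ of $\cF_1$-type. Neither case occurs. Since $\Lambda_1$ is linear in $\psi$, formula \eqref{eq:Dk-LT2} shows that every term of $D^k_\psi M_1$ retains a factor $\Lambda_1(\psi,\cdot,\cdot)$ or $\Lambda_1(\psi_j,\cdot,\cdot)$, hence is of $\cF_1$-type; since $\Lambda_2(\psi,\psi,\cdot,\cdot)$ is quadratic, \eqref{eq:Dk-QT2} shows that every term of $D^k_\psi M_2$ retains a factor $\Lambda_2(\o,\o_1,\cdot,\cdot)$ with $\o,\o_1\in\{\psi,\psi_1,\dots,\psi_k\}$, hence is of $\cF_2$-type (not $\cF_1$-type, since $\Lambda_2(\psi_j,\psi,\cdot,\cdot)$ is not of the form $\Lambda_1(\o,\cdot,\cdot)$). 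This matters because your fallback device would fail if such terms existed: a term with kernel $|e-\s|^{-N-\alpha}$ and no vanishing $\Lambda$-factor is not absolutely integrable on the $(N-1)$-dimensional sphere, and the template $\cF_3$ only tolerates the exponent $N+\alpha-2$. With the terms classified correctly (the linearity and bilinearity of $\Lambda_1$, $\Lambda_2$ doing exactly this for you), your argument coincides with the paper's proof.
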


\begin{proof}
To prove \eqref{eq:est-M1}, it suffices to fix $e \in S$ and show that
\begin{equation}
  \label{eq:M-1-e-est}
|\cM_i(\th)|  \leq  c(1+ \|{\psi}\|_{C^{1,\b}(S)} )^{c}   \prod_{i=1}^k \|{\psi}_i\|_{C^{1, \b}(S)} 
 \end{equation}
and 
\begin{equation}
  \label{eq:M-i-e-est}
|\cM_i(\th_1)-\cM_i(\th_2)|  \leq  c|\th_1-\th_2|^{\b-\alpha} (1+ \|{\psi}\|_{C^{1,\b}(S)} )^{c}   \prod_{i=1}^k \|{\psi}_i\|_{C^{1, \b}(S)} 
 \end{equation}
for $\th,\th_1,\th_2 \in S_e$, where $S_e$ is defined in (\ref{eq:def-Se}) and $c>1$ does not depend on $e$. 
For this, we define a Lipschitz continuous map $\th \mapsto R_\th$ of rotations as in Remark~\ref{example-rotations} 
corresponding to $e$, so that the inclusion in (\ref{eq:def-Se}) holds. By a change of variable, we then have  
$$
\cM_i(\th )= \int_{S }   D^k_\psi {M_i}({\psi} , R_\th \s,\th  )[\psi_1,\dots,\psi_k]\,dV(\s) \qquad 
\text{for $\th \in S_e$.}
$$

We first consider the case $i=1,$ and we note that 
$$
{M_1}({\psi} , R_\th \s,\th  ) =\frac{T_1({\psi }, \s,\th )}{|e-\s|^{N+\a}} \, {\psi}^{N-2}(R_\th \s)  
 \qquad \text{for $\th \in S_e$,}
$$
where
$$
T_1: \cB(r) \times S \times S_e   \to \R \qquad \text{defined by}\qquad T_1({\psi } , \s,\th )=  \L _1({\psi},R_\th \s,\th)\ov{\cK}_\a({\psi}, R_\th \s,\th).
$$
By \eqref{eq:Dk-T1T2}, we thus have 
 $$
 D^k_{\psi} M_1({\psi } , R_\th \s,\th )[\psi_1,\dots,\psi_k]  = \frac{1}{|e-\s|^{N+\a}}
 \sum_{\cN\in   \scrS_k}\Psi^{\cN}(R_\th \s)  D^{|\cN|}_\psi T_1({\psi } , \s,\th )[\psi_i]_{i \in \cN}   ,  
 $$
 where $\Psi^{\cN}=\psi^{N-2}$ when $k=|\cN|$, and 
 $$
\Psi^{\cN}:= \prod_{\ell=0}^{ k-|\cN|-1} (N-2-\ell)\: {\psi}^{N-2-(k-|\cN |)} \prod_{j \in \cN^c}  {\psi}_{j} 
\qquad\text{ when } k>|\cN|
 $$
(noting that $|\cN^c|=k -|\cN|$). By \eqref{eq:Dk-LT2} we have, if $|\cN| \ge 1$, 
 \begin{align*}
 D^{|\cN|}_{\psi}  T_1({\psi } ,\s,\th )[\psi_i]_{i \in \cN} &=   \L_1({\psi},R_\th \s ,\th)    D^{|\cN| }_{\psi}\ov{\cK}_\a({\psi},R_\th \s,\th)[\psi_i]_{i \in \cN} \\
&+  \sum_{j\in\cN}   \L_1({\psi }_{j},R_\th \s,\th)   D^{|\cN|-1}_{\psi }\ov{ \cK}_\a({\psi },R_\th \s,\th ) 
 [\psi_i]_{ {i \in \cN \setminus\{j\}} }.
 \end{align*}
Consequently, 
\begin{equation}
  \label{eq:decomposition-M-1-der}
 D^k_{\psi} M_1({\psi} , R_\th \s,\th)[\psi_1,\dots,\psi_k] = \sum_{\cN\in       \scrS_k} M^{\cN}_1(\s,\th),
\end{equation}
where 
\begin{align*}
M^{\cN}_1(\s,\th)=&  \frac{\Psi^{\cN}(R_\th \s)}{|e-\s|^{N+\a}} \Bigl( \L_1({\psi},R_\th \s, \th )    D^{|\cN| }_{\psi}\ov{\cK}_\a({\psi}, R_\th \s,\th) [\psi_i]_{i \in \cN }\\
&+  \sum_{j\in\cN}  \L_1({\psi}_{j}, R_\th \s,\th)    D^{|\cN|-1}_{\psi} \ov{\cK}_\a({\psi},R_\th \s,\th) 
[\psi_i]_{{i \in \cN \setminus\{j\}}}\Bigr),
\end{align*}
where the second summand does not appear if $|\cN|=0$. Clearly we also have that 
\be \label{eq:est-psi-cN}
\| \Psi^{\cN} \| _{C^{1,\b}(S)} \leq c (1+ \|{\psi}\|_{C^{1,\b}(S)})^{c}    \prod_{i \in \cN^c} \| {\psi}_{i}\| _{C^{1,\b}(S)}.
\ee

Denoting
$$
\cM_1^\cN: S_e \to \R, \qquad \cM_1^{\cN}(\th)= \int_{S} M^{\cN}_1(\s,\th)\,dV(\s),
$$
by Lemma~\ref{lem:est-cand-deriv} it follows that $\cM_1^\cN \in C^{\beta-\alpha}(S_e)$ and that 
$$
\| \cM_1^\cN \|_{C^{\beta-\alpha}(S_e)} \le c(1+ \|{\psi}\|_{C^{1,\b}(S)} )^{c}  \|\Psi^\cN\|_{C^{\b}(S)}    \prod_{i \in \cN} \|{\psi}_i\|_{C^{1, \b}(S)}   
$$
with a constant $c>1$ depending only on $N,\a,\b,r$, and $|\cN|$ (in particular, independent of $e\in S$). 
Since 
$$
\cM_1(\th ) =  \sum_{\cN\in       \scrS_k} \cM^{\cN}_1(\th) \qquad \text{for $\th \in S_e$}
$$
by  \eqref{eq:decomposition-M-1-der}, we thus obtain the estimates (\ref{eq:M-1-e-est}) and (\ref{eq:M-i-e-est}) 
for $i=1$, as desired. 

The estimate for $\cM_2$ follows the same arguments as above but using  \eqref{eq:Dk-QT2} in the place of 
\eqref{eq:Dk-LT2} while the one  for    $\cM_3$  is similar but easier. In theses two cases, we get a $C^\b(S)$
estimate for  $\cM_2$ and $\cM_3$, and in particular a $C^{\b-\a}(S)$ estimate.
\end{proof}

We are now in position to prove the regularity of  the NMC operator of  perturbed spheres, thereby completing the proof of Theorem~\ref{sec:nmc-lattice-prop-h-0}. 

\begin{theorem}
\label{theorem-smoothness-h-1}
With $\cO$ defined by \eqref{eq:calOset}, the map ${h}: \cO \to  C^{\b-\a} (S)$ defined by \eqref{eq:first-funct-eq} 
is smooth.
\end{theorem}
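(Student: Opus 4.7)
The plan is to deduce Theorem~\ref{theorem-smoothness-h-1} from Proposition~\ref{prop:smooth-ovH-0} by identifying the integrals appearing there with the Fr\'echet derivatives of $\tilde h$, and then assembling $h$ via the Leibniz rule. First, since $h(\phi) = \tilde h(1+\phi)$ and the affine shift $\phi \mapsto 1+\phi$ is a smooth diffeomorphism from $\cO$ onto some open subset of $\cB(r)$ for suitable $r>0$, it suffices to show that $\tilde h:\cB(r) \to C^{\beta-\alpha}(S)$ is of class $C^\infty$. From representation (\ref{eq:def-ti-h-0}) we write
$$\tilde h(\psi)(\theta) \;=\; \cN_2(\psi)(\theta) \;-\; \psi(\theta)\,\cN_1(\psi)(\theta) \;+\; \tfrac{1}{2}\psi(\theta)\,\cN_3(\psi)(\theta),$$
where $\cN_i(\psi)(\theta) := \int_S M_i(\psi,\sigma,\theta)\,dV(\sigma)$. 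Since the pointwise product is a bounded, and hence smooth, bilinear map $C^{1,\beta}(S) \times C^{\beta-\alpha}(S) \to C^{\beta-\alpha}(S)$, the Leibniz rule reduces everything to proving that each $\cN_i:\cB(r)\to C^{\beta-\alpha}(S)$ is of class $C^\infty$.

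For fixed $i \in \{1,2,3\}$ and $k\ge 0$, the natural candidate for the $k$-th Fr\'echet derivative of $\cN_i$ at $\psi$ is the symmetric $k$-linear form
$$T_k[\psi](\psi_1,\ldots,\psi_k)(\theta) \;:=\; \int_S D^k_\psi M_i(\psi,\sigma,\theta)[\psi_1,\ldots,\psi_k]\,dV(\sigma).$$
Proposition~\ref{prop:smooth-ovH-0} asserts precisely that $T_k[\psi](\psi_1,\ldots,\psi_k)$ lies in $C^{\beta-\alpha}(S)$ with the multilinear bound $c(1+\|\psi\|_{C^{1,\beta}(S)})^c\prod_{j=1}^k \|\psi_j\|_{C^{1,\beta}(S)}$, so $T_k[\psi]$ belongs to the space of bounded symmetric $k$-linear maps $C^{1,\beta}(S)^k \to C^{\beta-\alpha}(S)$, with operator norm locally bounded on $\cB(r)$.

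To identify $T_k[\psi]$ with $D^k \cN_i(\psi)$, I will proceed by induction on $k$. Fix $\psi \in \cB(r)$ and choose $\eta \in C^{1,\beta}(S)$ so small that the segment $\psi + t\eta$, $t \in [0,1]$, stays in $\cB(r)$. For each $(\sigma,\theta)$, Taylor's formula applied to $s \mapsto M_i(\psi+s\eta,\sigma,\theta)$ at order $k$ gives the pointwise identity
$$M_i(\psi+\eta,\sigma,\theta) - \sum_{j=0}^{k}\tfrac{1}{j!}D^j_\psi M_i(\psi,\sigma,\theta)[\eta]^j \;=\; \int_0^1 \tfrac{(1-t)^k}{k!}\, D^{k+1}_\psi M_i(\psi+t\eta,\sigma,\theta)[\eta]^{k+1}\,dt.$$
Integrating in $\sigma$, swapping the order of integration, and applying Proposition~\ref{prop:smooth-ovH-0} at order $k+1$ to the inner $\sigma$-integral, I obtain
$$\Bigl\|\,\cN_i(\psi+\eta) \;-\; \sum_{j=0}^{k}\tfrac{1}{j!}\, T_j[\psi](\eta,\ldots,\eta)\,\Bigr\|_{C^{\beta-\alpha}(S)} \;\le\; c\sup_{t\in[0,1]}(1+\|\psi+t\eta\|_{C^{1,\beta}(S)})^{c}\,\|\eta\|_{C^{1,\beta}(S)}^{k+1},$$
which identifies $T_k[\psi]$ with $D^k\cN_i(\psi)$ and closes the induction. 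The continuity of $\psi \mapsto T_k[\psi]$ in the operator norm follows from the same Taylor bound at order $k+1$ applied to $T_k[\psi+\eta] - T_k[\psi]$.

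The main obstacle is justifying the exchange of Fr\'echet differentiation with the integral over $S$ \emph{in the $C^{\beta-\alpha}(S)$ topology}, rather than merely pointwise or in $L^\infty(S)$; this is precisely what Proposition~\ref{prop:smooth-ovH-0} provides, by upgrading pointwise derivative bounds to a H\"older estimate for the integral with a constant that is \emph{uniform} for $\psi$ in bounded subsets of $\cB(r)$. All of the technical work encapsulated in Lemmas~\ref{lem:est-cK} and~\ref{lem:est-cand-deriv}, together with the rotation trick of Remark~\ref{example-rotations}, was precisely tailored to enable this step. Once it is in hand, the induction yields $\cN_i \in C^\infty(\cB(r), C^{\beta-\alpha}(S))$ for $i=1,2,3$, and the Leibniz rule combined with the smoothness of pointwise multiplication gives the smoothness of $\tilde h$ and hence of $h$, concluding the proof.
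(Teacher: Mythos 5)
You follow essentially the paper's route: decompose $h$ (via $\tilde h$) through the kernels $M_1,M_2,M_3$, take as candidate $k$-th derivative the integral $T_k[\psi](\psi_1,\dots,\psi_k)=\int_S D^k_\psi M_i(\psi,\sigma,\cdot)[\psi_1,\dots,\psi_k]\,dV(\sigma)$, and use Proposition~\ref{prop:smooth-ovH-0} to control it in $C^{\beta-\alpha}(S)$; the Leibniz-rule reduction and your continuity argument for $\psi\mapsto T_k[\psi]$ are fine (and the imprecision that $1+\cO$ is not contained in a single $\cB(r)$ is harmless, since smoothness is local and one argues on $\cB(r)$ for every $r>0$, as the paper does). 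The gap is the sentence claiming that your estimate ``identifies $T_k[\psi]$ with $D^k\cN_i(\psi)$ and closes the induction''. An order-$k$ Taylor-type expansion with remainder $O(\|\eta\|^{k+1})$ at the point $\psi$ does \emph{not} imply that the $k$-th Fr\'echet derivative exists at $\psi$, even when the derivatives of order $k-1$ and lower are known: in one variable, $f(x)=x^{3}\sin(1/x)$, $f(0)=0$, is $C^{1}$ and satisfies $f(h)=0+0\cdot h+0\cdot h^{2}/2+O(|h|^{3})$, yet $f''(0)$ does not exist. What would rescue your argument is a converse Taylor theorem, which needs precisely the two extra features you mention only in passing (a remainder bound locally uniform in $\psi$, plus continuity of the coefficient maps $T_j$); that result is genuinely nontrivial and you neither state nor prove it, so as written the identification step is unjustified.

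The simple repair --- and the paper's actual induction step --- is to differentiate one order at a time: assuming $D^k\cN_i=T_k$, write pointwise, for fixed $\sigma,\theta$ and unit vectors $\psi_1,\dots,\psi_k$,
\begin{align*}
&D^k_\psi M_i(\psi+v,\sigma,\theta)[\psi_1,\dots,\psi_k]-D^k_\psi M_i(\psi,\sigma,\theta)[\psi_1,\dots,\psi_k]-D^{k+1}_\psi M_i(\psi,\sigma,\theta)[\psi_1,\dots,\psi_k,v]\\
&\qquad =\int_0^1\!\!\int_0^1 \rho\, D^{k+2}_\psi M_i(\psi+\tau\rho v,\sigma,\theta)[\psi_1,\dots,\psi_k,v,v]\,d\tau\,d\rho,
\end{align*}
integrate in $\sigma$, and apply Proposition~\ref{prop:smooth-ovH-0} at order $k+2$ to obtain $\bigl\|T_k[\psi+v]-T_k[\psi]-T_{k+1}[\psi][\cdot,\dots,\cdot,v]\bigr\|\le c\,\|v\|_{C^{1,\beta}(S)}^2$ in the norm of $k$-linear maps, with $c$ locally uniform. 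This shows directly that $D^{k+1}\cN_i(\psi)$ exists in the Fr\'echet sense and equals $T_{k+1}[\psi]$, and continuity of $D^k\cN_i$ then follows from the existence of $D^{k+1}\cN_i$. All the analytic ingredients are exactly the ones you already invoke; only the logical organization of the induction needs this change.
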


\begin{proof}
We fix $r>0$. By \eqref{eq:formh}, we have
\begin{equation}
  \label{eq:decomposition-h-tilde-h}
h(\vp)= -(1+\vp ) \,\ti{h}_1(1+\vp ) +\ti{h}_2(1+\vp)+\frac{1+\vp}{2} \, \ti{h}_3(1+\vp)
\end{equation}
for $\phi \in \cO$ with $\phi>r-1$ on $S$, where the maps    $\ti{h}_j :  \cB(r) \to  C^{\b-\a} (S)$ are given by 
$$
\ti{h}_j(\psi)(\th)=  \int_{{S}}M_j(\psi,\s,\th)  \,     dV(\s)
$$
for $j=1,2,3$ and the function  $ {M_j}$ is defined  in Proposition~\ref{prop:smooth-ovH-0} -- which guarantees that
$\ti{h}_j$ takes values in $C^{\b-\a} (S)$.  
Thus, it suffices to establish that $\ti{h}_j $, for $j=1,2,3$, are smooth on  $ \cB(r)$ for every $r>0$.

For this, we only need to prove that, for $k \in \N  $,  
\begin{equation}
  \label{eq:statement-H-1}
D^k \ti{h}_j({\psi} )= \int_{S}    D^k_\psi M_j(\psi,\s,\cdot)\,  dV(\s) \qquad \text{in Fr\'echet sense}   
\end{equation}
for $j=1,2,3$. Then the continuity of $D^k \ti{h}_j$ is a well known consequence of the existence of $D^{k+1}\ti{h}_j$ in Fr{\'e}chet sense. To prove (\ref{eq:statement-H-1}), we proceed by induction. For $k = 0$, the statement is true by definition. 
 Let us now assume that the statement holds true for some $k \ge 0$. Then $D^k \ti{h}_j({\psi} )$ is given by 
 \be
\label{eq:induc-hypo-ell}
 D^{k} \ti{h}_j({\psi})[\psi_1,\dots, \psi_{k}] (\th)=\int_{S }  D^{k}_{\psi} M_j({\psi},\s,\th)[\psi_1,\dots,\psi_{k}]\,   dV(\s).
\ee 
We fix ${\psi}_1,\dots,{\psi}_{k} \in C^{1,\b}(S)$. Moreover, for  ${\psi}\in \cB(r) $ and $v \in C^{1,\beta}(S)$,  we put
$$
\G({\psi},v,\th )=\int_{S }   D^{k+1}_{\psi} M_j({\psi},\s,\th)[\psi_1,\dots,\psi_k,v]\,  dV(\s).
$$
Let $\psi\in\cB(r)$ and $v\in  C^{1,\beta}(S)  $  with  $\|v\|_{C^{1,\beta}(S)}<r/2$. We have  
\begin{align*}
&D^{k}  \ti{h}_j({\psi}+v)[\psi_1,\dots,\psi_{k}](\th )-  D^{k}  \ti{h}_j({\psi} )[\psi_1,\dots,\psi_k] (\th)- \G({\psi},v , \th)
 \nonumber \\
 &=\int_{S }  \int_0^1 \!\! \left\{ D^{k+1}_{\psi}  M_j({\psi}+\rho v ,\s,\th)-D^{k+1}_{\psi} M_j({\psi},\s,\th)\right\} \!\![\psi_1,\dots,\psi_k,v] d\rho dV(\s)  \nonumber \\
 &= \int_0^1 \rho \int_0^1  {\cH}^{\rho,\tau}(\th)d\t d\rho,
\end{align*} 
where
$$
{\cH}^{\rho,\tau}(\th ):= \int_{S }   D^{k+2}_{\psi} M_j({\psi}+\t \rho  v,\s,\th)[\psi_1,\dots,\psi_k,v,v] \, dV(\s).
$$

Note that ${\psi}+\t \rho  v\in  \cB(r/2)$ for every $\t,\rho\in[0,1]$.    By Proposition~\ref{prop:smooth-ovH-0}, we have 
\begin{align*}
\|{\cH}^{\rho,\tau}\|_{C^{\b-\a}(S)}  &\leq  c(1+ \|\psi+\t \rho  v \|_{C^{1,\b}(S)} )^c \|v\|_{C^{1, \b}(\R)}^2 \prod_{i=1}^k \|u_i\|_{C^{1,\beta}(S)}\\
&\leq  c(1+ \|\psi\|_{C^{1,\b}(S)} +\| v \|_{C^{1,\b}(S)})^c  \|v\|_{C^{1, \b}(S)}^2 \prod_{i=1}^k \|\psi_i\|_{C^{1,\beta}(S)}
\end{align*}
with a constant $c>1$ independent of $\rho,\tau,\psi,\psi_1,\dots,\psi_{k}$ and $v$. Consequently, 
\begin{align*}
\| D^{k}   & \ti{h}_j({\psi}+v)[\psi_1,\dots,\psi_{k}]-  D^{k}  \ti{h}_j({\psi} )[\psi_1,\dots,\psi_k]- \G({\psi},v , \cdot)\|_{C^{\b-\a}(S)}\\
&\leq  c(1+ \|\psi\|_{C^{1,\b}(S)} +\| v \|_{C^{1,\b}(S)})^c  \|v\|_{C^{1, \b}(S)}^2 \prod_{i=1}^k \|\psi_i\|_{C^{1,\beta}(S)}.
\end{align*}
This shows that $D^{k+1} \ti{h}_j(\psi)$ exists in Fr\'echet sense, and that 
$$
 D^{k+1}  \ti{h}_j(\psi)[\psi_1,\dots,\psi_{k},v] = \G(\psi,v,\cdot) \quad \in \; C^{\beta-\alpha}(S).
$$
We conclude that (\ref{eq:statement-H-1}) holds for $k+1$ in place of $k$, and thus the proof is finished. 
\end{proof}

We close this section with an outline of the proof of Proposition~\ref{sec:nmc-lattice-prop-Gk}, 
which is concerned with the maps $G$ and $G_p$. Since the definition of these functions in (\ref{eq:1-def-Gk}) and \eqref{eq:defG} does not involve singular integrals, the proof is much easier than the proof of Theorem~\ref{sec:nmc-lattice-prop-h-0} but still somewhat lengthy if all details are carried out. In the following, we point out the main steps. 

\begin{proof}[Sketch of the proof of Proposition~\ref{sec:nmc-lattice-prop-Gk}]
We fix $c_1 \in (0,c_0)$ arbitrarily, and we note that it clearly suffices to prove the statement with $c_0$ replaced by $c_1$. For $p \in \scrL_*$, we then use polar coordinates to write $G_p$ as follows: 
\begin{align}
-\frac{1}{\alpha} G_p(\tau,\phi)(\th )&= \int_{B_\vp }\frac{1}{ |\tau(y- F_\phi(\th ))+{p}|^{N+\alpha}} dy\nonumber\\
&= \int_{S} \int_0^{1+\phi(\s)}
\frac{r^{N-1}}{|\tau(r \s - F_\phi(\th ))+{p}|^{N+\alpha}} dr\, dV(\s) \nonumber\\ 
&= \int_{S}  \int_0^{1} \frac{(1+\phi(\s))^N \rho^{N-1}}{|\tau\left\{\rho(1+\phi(\s)) \s - (1+\phi(\th)) \th\right\}+{p}|^{N+\alpha}} d\rho\, dV(\s) \nonumber \\
&= \int_{S}  \int_0^{1} \frac{(1+\phi(\s))^N \rho^{N-1}}{|\cD_p(\tau,\phi)(\rho,\s,\th)|^{N+\alpha}} d\rho\, dV(\s),
\label{polar-coord-representation}
 \end{align}
with 
$$
\cD_p(\tau,\phi)(\rho,\s,\th):= \tau\left\{\rho(1+\phi(\s)) \s - (1+\phi(\th)) \th\right\}+{p}.
$$
We point out that, for $\phi \in \cO$, $\tau \in (-\frac{c_1}{4},\frac{c_1}{4})$, $p \in \scrL_*$, $\rho \in [0,1]$ and $\s,\th \in S$ we have
\begin{align}
|\cD_p(\tau,\phi)(\rho,\s,\th)| \ge |p|- |\tau|\, \Bigl|\rho(1+\phi(\s)) \s - (1+\phi(\th)) \th \Bigr|  
&\ge |p|- \frac{c_1}{4} \, (2 + 2 \|\phi\|_{L^\infty(S)}) \nonumber\\
&\ge |p|- c_1 \label{denominator-estimate}\\
& \ge c_0 -c_1>0 \nonumber 
\end{align}
by the definition of $c_0$ in \eqref{eq:inf-cJ}. 

We now claim that $G_p$ is of class $C^k$ for all $k \in \N \cup \{0\}$, and that 
every partial derivative  $\partial^\gamma G_p$ of order  $|\gamma|=k$ with respect to $\tau$ and $\phi$ can be written as  
\begin{align*}
\partial^\gamma G_p(\tau,\phi)&[\psi_1,\dots,\psi_\ell](\th)\\
&=  \int_{S}   \int_0^{1} \sum_{\cN \subset \scrS_\ell}\, \prod_{i \in \cN}  \psi_i(\th) \prod_{j \in \cN^c}\psi_j(\s) 
\frac{P^{\gamma,\cN}(\tau,\rho,\s,\th,\phi(\s),\phi(\th), p)}{|\cD_p(\tau,\phi)(\rho,\s,\th)|^{N+\alpha+2k}} d\rho\, dV(\s) 
\end{align*}
for $\psi_1,\dots,\psi_\ell \in C^{1,\beta}(S)$, $\th \in S$. Here, $\ell \le k$ is the number of derivatives with respect to $\phi$, $\scrS_\ell$ is is the set of subsets of $\{1,\dots, \ell\} $ and  $ \cN^c= \{1,\dots, \ell \}\setminus \cN $ 
for $\cN \in  \scrS_\ell$. Moreover, the functions $P^{\gamma,\cN}$ are polynomials in all variables which are of 
degree at most $2k$ in the variable $p=(p_1,\dots,p_N)$. This representation follows easily from 
\eqref{polar-coord-representation}. We use it, together with a similar induction argument as in the proof of 
Theorem \ref{theorem-smoothness-h-1}, to show that $G_p$ is a smooth map. 
In this step we also use the embeddings $C^{1,\beta}(S) \hookrightarrow C^{1}(S) \hookrightarrow C^{\beta-\alpha}(S)$ and the estimate
\begin{align}
&\Bigl\| \sum_{\cN \subset \scrS_\ell}\, \prod_{i \in \cN}  \psi_i(\cdot) \prod_{j \in \cN^c}\psi_j(\s) 
\frac{P^{\gamma,\ell}(\tau,\rho,\s,\cdot,\phi(\s),\phi(\cdot), p)}{|\cD_p(\tau,\phi)((\rho,\s,\cdot+)|^{N+\alpha+2k}} \Bigr\|_{C^{1}(S)}\nonumber\\ 
&\le c_\gamma \bigl(1+ \|\phi\|_{C^1(S)}\bigr)^{c_\gamma} |p|^{-N-\alpha} \prod_{i=1}^\ell \|\psi_\ell\|_{C^1(S)} ,
\label{C1-norm-est}
\end{align}
which can be deduced from \eqref{denominator-estimate} since $|p|-c_1= |p|- (c_1/c_0) c_0\geq |p|(1-c_1/c_0)>0$. 
Here $c_\gamma>1$ is a constant which depends on $\gamma$ but not on $\tau,\rho,\s$ and $p$.

It thus follows that ${G}_p: (-\frac{c_1}{4},\frac{c_1}{4}) \times \cO \to C^{\beta-\alpha}(S)$ is of class $C^\infty$, and that  
\begin{equation}
\label{eq:G_p-to-G}
\Bigl\|\partial^\gamma G_p(\tau,\phi)[\psi_1,\dots,\psi_\ell]\Bigr\|_{C^{\beta-\alpha}(S)} \le
d_\gamma\bigl(1+ \|\phi\|_{C^{1,\beta}(S)}\bigr)^{d_\gamma} |p|^{-N-\alpha} \prod_{i=1}^\ell \|\psi_i \|_{C^{1,\beta}(S)}  
\end{equation}
for every partial derivative  $\partial^\gamma G_p$ of the form above, and with a constant $d_\gamma>0$ independent of $\tau$ and $p$. 
Consequently, the series $\sum_{p \in \scrL_*} \partial^\gamma {G}_p(\tau,\phi) $ is convergent in the space $C^{\beta-\alpha}(S)$, and the convergence is uniform in $\tau \in (-\frac{c_1}{4},\frac{c_1}{4})$ and $\phi \in \cO$. From this we deduce that the map 
$$
G= \sum \limits_{p \in \scrL_*} {G}_p : (-\frac{c_1}{4},\frac{c_1}{4}) \times \cO \to C^{\beta-\alpha}(S)
$$
is of class $C^\infty$, as claimed.
\end{proof}

\section{The linearized NMC operator}
\label{sec:line-nmc-oper}
 In this section, we compute a simple expression for the linearization at zero of the nonlocal mean curvature 
 operator $h$ defined in
(\ref{eq:def-h}), and we study its invertibility properties between suitably chosen function spaces. As we mentioned in the introduction, once the Fr{\'e}chet
differentiability of $h$ is proved -- as we have done --, the expression can also be derived from the results of \cite[Appendix, Proposition B.2]{Davila2014B} 
and \cite[Section 6]{FFMMM} applied to the special case of the sphere $S$. For completeness, we give a direct proof in our setting based on formula 
\eqref{eq:decomposition-h-tilde-h}.
\begin{lemma}
\label{lem:lin-NMC-op}
Let $\alpha \in (0,1)$, $\beta \in (\alpha,1)$, and let $h: \cO \subset  C^{1,\b}(S) \to C^{\b-\a}(S)$ be defined by
\eqref{eq:def-h}. Then, we have  
\begin{equation}
  \label{eq:linearization-formula}
\frac{1}{\a}Dh(0)\vp=L_\a \vp -  \l_1\vp  \qquad \text{in $C^{\b-\a}(S)\quad $ for $\phi \in C^{1,\beta}(S),$}
\end{equation}
with 
\be \label{eq:def-L-a-sec-lin}
L_\a\vp(\th )=PV \int_S\frac{\vp(\th)-\vp(\s)}{|\th -\s|^{N+\a}}\, dV(\s) \qquad \text{for $\th \in S$}
\ee
and $\l_1$ given in \eqref{eq:l_k-0} for $k =1$. In addition, $L_\a$ defines a continuous linear operator  
$C^{1,\beta}(S) \to C^{\beta-\alpha}(S)$.
\end{lemma}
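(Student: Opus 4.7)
My plan is to use the explicit representation \eqref{eq:decomposition-h-tilde-h} of $h$ together with the smoothness of $h$ established in Theorem~\ref{theorem-smoothness-h-1}: this already gives Fr\'echet differentiability, so $Dh(0)\vp$ can be computed as a G\^ateaux derivative by differentiating each factor $\ti h_j$, $j=1,2,3$, at $\psi=1$ under the integral sign. The key structural facts making the computation tractable are $\Lambda_1(1,\s,\th)=0$, $\Lambda_2(1,1,\s,\th)=0$ and $\ov\cK_\a(1,\s,\th)=1$. Consequently $D\ti h_2(1)\vp=0$, while in $D\ti h_1(1)\vp$ all derivatives landing on $\ov\cK_\a$ or $\psi^{N-2}(\s)$ are killed by the vanishing of $\Lambda_1(1,\cdot,\cdot)$, leaving
\begin{equation*}
D\ti h_1(1)\vp(\th)=\int_S \frac{\vp(\th)-\vp(\s)-(\th-\s)\cdot\n\vp(\s)}{|\th-\s|^{N+\a}}\,dV(\s)=:\ti L_\a\vp(\th).
\end{equation*}
For $\ti h_3$, using $D_\psi Q(1,\s,\th)\vp=\vp(\s)+\vp(\th)$ and the product rule one obtains
\begin{equation*}
D\ti h_3(1)\vp(\th)=-\tfrac{N+\a}{2}\vp(\th)\,\ti h_3(1)+\tfrac{N-\a-2}{2}\int_S\frac{\vp(\s)}{|\th-\s|^{N+\a-2}}\,dV(\s),
\end{equation*}
where $\ti h_3(1)(\th)=\int_S|\th-\s|^{2-N-\a}\,dV(\s)$ is constant on $S$ by rotational invariance, and its value is pinned down to $2\l_1$ by the identity $h(0)=\l_1$ read off from \eqref{eq:NMC-sphere} and \eqref{eq:link-H--NMC}.

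Assembling the three contributions yields
\begin{equation*}
Dh(0)\vp(\th)=-\ti L_\a\vp(\th)-\l_1\vp(\th)\tfrac{N+\a-2}{2}+\tfrac{N-\a-2}{4}\int_S\frac{\vp(\s)}{|\th-\s|^{N+\a-2}}\,dV(\s),
\end{equation*}
so the target identity $\tfrac{1}{\a}Dh(0)\vp=L_\a\vp-\l_1\vp$ reduces after algebra to the nonlocal integration-by-parts formula
\begin{equation*}
\a L_\a\vp(\th)+\ti L_\a\vp(\th)=\tfrac{N-\a-2}{4}\int_S\frac{\vp(\s)-\vp(\th)}{|\th-\s|^{N+\a-2}}\,dV(\s).
\end{equation*}
To prove this identity I will introduce the tangent vector field $V(\s):=P_\s(\th-\s)/|\th-\s|^{N+\a}$ on $S\setminus\{\th\}$, where $P_\s$ denotes the orthogonal projection onto $T_\s S$. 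Since $V=(N+\a-2)^{-1}\n_S|\th-\s|^{2-N-\a}$, a direct calculation of $\Delta_S$ on the radial function $|\th-\s|^{2-N-\a}$ via the polar form of $\Delta_S$ on $S^{N-1}$ produces the explicit two-term expression
\begin{equation*}
\div_S V(\s)=\frac{\a+1}{|\th-\s|^{N+\a}}+\frac{N-\a-2}{4\,|\th-\s|^{N+\a-2}}.
\end{equation*}
Applying the divergence theorem to $\vp\,V$ on $S\setminus B_\e(\th)$, using $\div_S(\vp V)=\n\vp\cdot V+\vp\,\div_S V$, and letting $\e\to 0$ will then deliver the required identity.

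The main obstacle is the $\e\to 0$ limit, since the term $\vp(\th)\int_{S\setminus B_\e(\th)}|\th-\s|^{-N-\a}\,dV$ and the boundary contribution on $\partial B_\e(\th)\cap S$ are individually divergent of order $\e^{-\a-1}$. The cancellation comes from applying Stokes' theorem to $V$ alone, yielding the exact equality $\int_{\partial B_\e\cap S}V\cdot\nu\,dS=(\a+1)\int_{S\setminus B_\e(\th)}|\th-\s|^{-N-\a}\,dV+\tfrac{N-\a-2}{4}\int_{S\setminus B_\e(\th)}|\th-\s|^{2-N-\a}\,dV$, so that the singular parts multiplying $\vp(\th)$ cancel to exact order. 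The residual boundary integral $\int_{\partial B_\e(\th)\cap S}(\vp(\s)-\vp(\th))V\cdot\nu\,dS$ is then shown to vanish in the limit by two observations: the linear Taylor term $\n\vp(\th)\cdot(\s-\th)$ integrates to zero by axial symmetry of $\partial B_\e(\th)\cap S$ around $\th$ together with $\n\vp(\th)\cdot\th=0$, while the $C^{1,\b}$ remainder produces an $O(\e^{\b-\a})$ contribution that tends to zero precisely because $\b>\a$. Finally, the continuity of $L_\a:C^{1,\b}(S)\to C^{\b-\a}(S)$ follows immediately from the identity $L_\a=\a^{-1}Dh(0)+\l_1\,\id$ and the $C^{1,\b}\to C^{\b-\a}$ continuity of $Dh(0)$ supplied by Theorem~\ref{theorem-smoothness-h-1}.
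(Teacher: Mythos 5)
Your proposal is correct and follows essentially the same route as the paper's proof: the decomposition \eqref{eq:decomposition-h-tilde-h}, differentiation under the integral sign at $\psi=1$ using $\Lambda_1(1,\cdot,\cdot)=\Lambda_2(1,1,\cdot,\cdot)=0$ and $\ov{\cK}_\a(1,\cdot,\cdot)=1$, the identity $2\l_1=\int_S|\th-\s|^{2-N-\a}\,dV(\s)$, and a spherical integration by parts of the gradient term against the tangential field $P_\s(\th-\s)\,|\th-\s|^{-N-\a}$, whose divergence you compute correctly. The only (immaterial) difference is bookkeeping: the paper integrates by parts against $\vp(\s)-\vp(\th)$, so every term converges and the boundary contribution is $O(\e^{\b-\a})$ directly, whereas you apply the divergence theorem to $\vp V$ and to $V$ separately and cancel the two $O(\e^{-1-\a})$ pieces -- an equivalent rearrangement of the same computation.
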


Before proving the lemma, we first discuss the spectral representation of the operator $L_\alpha$ 
and the special role of $\lambda_1$. For $k \in \N$, we let $\cE_k$ be the space of 
spherical harmonics of degree $k$, and we denote by  $n_k$ its dimension. We recall that $n_0=1$ and that $\cE_0$ 
consists of constant functions, whereas $n_1 = N$ and $\cE_1$ is spanned by the coordinate functions 
$\theta \mapsto  \theta_i$ for $i=1,\dots,N$. As already mentioned in the introduction,  we have
\be \label{eq:L-a-sph-harm}
L_\a  \psi=\l_k \psi \qquad \textrm{for every $k\in\N$ and  $\psi\in \cE_k$},
\ee
where    
\be \label{eq:l_k-0-sec-lin}
\l_k= \frac{\pi^{(N-1)/2}    \Gamma((1-\a)/2)    }{  (1+\a)2^\a       \Gamma((N+\a)/2) } 
\left(  \frac{ \Gamma\left(  \frac{2k+N+\a}{2} \right)  }{ \Gamma\left(  \frac{2k+N-\a-2}{2} \right) } -   
\frac{\Gamma\left(  \frac{N+\a}{2} \right)  }{ \Gamma\left(  \frac{N-\a-2}{2} \right) }   \right),
\ee 
see e.g.  \cite[Lemma 6.26]{Samko2002}. Here  $\Gamma$ denotes  the usual Gamma function, see e.g. 
\cite[Section 8.3]{GR} for its generalization to negative non-integer real numbers.
For the proof of Lemma \ref{lem:lin-NMC-op},  it will be useful to represent the  eigenvalue $\lambda_1$ in 
a different form. For this we note that, if $\th \in S$ is fixed, then the function 
$\sigma \mapsto Y_\th(\sigma):= \sigma \cdot \th$ is a spherical harmonic of degree one, so that 
\begin{equation}
  \label{eq:def-rem-l1-0}
\lambda_1 = \l_1 Y_\th(\th)=  L_\a Y_\th(\th)= \int_{{S}}\frac{1- \s\cdot\th}{|\th-\s|^{N+\a}}\, dV(\s) =
\int_{{S}}\frac{(\s- \theta) \cdot\s }{|\th-\s|^{N+\a}}\, dV(\s).
\end{equation}
Comparing this with (\ref{eq:def-frac-curvature}), we see that 
$\frac{2}{\a} d_{N,\a} \lambda_1$ equals the NMC of the sphere $S$, as stated in \eqref{eq:NMC-sphere}. 
Moreover, since $|\theta-\sigma|^2= 2(1-\sigma\cdot\theta)$,
we may rewrite the first integral in (\ref{eq:def-rem-l1-0}) to obtain the equality  
\begin{equation}
  \label{eq:def-rem-l1}
\l_1=\frac{1}{2} \int_{{S}}\frac{1}{|\th-\s|^{N+\a-2}}\, dV(\s),
\end{equation}
which will be used in the proof of Lemma~\ref{lem:lin-NMC-op}.


\begin{proof}[Proof of Lemma~\ref{lem:lin-NMC-op}]
By \eqref{eq:decomposition-h-tilde-h}, we have 
\begin{equation}
 \label{eq:decomposition-h-tilde-h-reminder}
h(\vp)= -(1+\vp ) \,\ti{h}_1(1+\vp ) +\ti{h}_2(1+\vp)+\frac{1+\vp}{2} \, \ti{h}_3(1+\vp)
\end{equation}
with 
$$
\ti{h}_j(\psi)(\th)=  \int_{{S}}M_j(\psi,\s,\th)  \,     dV(\s),
$$
for $j=1,2,3$, where the functions  $ {M_j}$ are defined  in Proposition~\ref{prop:smooth-ovH-0}.  
Let $\phi \in C^{1,\beta}(S)$.  By (\ref{eq:statement-H-1}),   the functions 
$D \ti{h}_j(1)\phi \in C^{0,\beta-\alpha}(S)$ are given by 
\begin{equation}
  \label{eq:tilde-hj-derv}
\left(D \ti{h}_j(1)\phi\right)(\th)= \int_{{S}}\partial_{\psi} M_j(1,\s,\theta )\phi  \, dV(\s).
\end{equation}

For $\sigma, \theta \in S$, $\sigma \not = \theta$ we have 
$$
M_1(1,\s,\th )=M_2(1,\s,\th)=0, \qquad M_3(1,\s,\th )= \frac{1}{|\th-\s|^{N+\a-2}}
$$
and 
\begin{align*}
\partial_{\psi} M_1(1,\s,\th )\phi &= \frac{\Lambda_1(\phi,\sigma,\theta)}{|\th-\s|^{N+\alpha}}=
\frac{\phi(\th)-\phi(\s)- (\th-\s) \cdot \nabla \phi(\s)}{|\th-\s|^{N+\alpha}},\\
\partial_{\psi} M_2(1,\s,\th )\phi &= 0,\\
\partial_{\psi} M_3(1,\s,\th )\phi &=  \frac{\partial_{\psi} \ov{\cK}_\a(1, \s,\th)\phi + 
(N-1)\phi(\s) }{|\th-\s|^{N+\a-2}}= \frac{-\frac{N+\alpha}{2}(\phi(\th)+\phi(\s)) + (N-1)\phi(\s) }
{|\th-\s|^{N+\a-2}}\\
&=-\frac{(N+\alpha) \phi(\th)+ (2+\alpha-N)\phi(\s)}{2|\th-\s|^{N+\a-2}}, 
\end{align*}
since $\Lambda_1(1,\cdot,\cdot)\equiv 0$ and $\Lambda_2(1,\cdot,\cdot)\equiv \partial_{\psi}
\Lambda_2(1,\cdot,\cdot)\phi \equiv 0$ on $S \times S$. Combining this with 
(\ref{eq:decomposition-h-tilde-h-reminder}) and (\ref{eq:tilde-hj-derv}), 
and also using \eqref{eq:def-rem-l1-0} or \eqref{eq:def-rem-l1}, we find that 
\begin{align}
&\hspace{-5mm}  \left(D h  (0)\phi \right)(\th)\nonumber\\
&=\int_{{S}}\Bigl\{-\partial_{\psi} M_1(1,\s,\th )\phi  + \partial_{\psi} M_2(1,\s,\th )\phi + 
\frac{\phi(\th)  M_3(1,\s,\th )+  \partial_{\psi} M_3(1,\s,\th )\phi}{2}\Bigr\}dV(\s)\nonumber\\
&= - \int_{{S}}\frac{\phi(\th)-\phi(\s)- (\th-\s) \cdot \nabla \phi(\s)}{|\th-\s|^{N+\alpha}}\, dV(\s)\nonumber\\
&\hspace{1cm}-\int_{{S}}\frac{(N+\alpha-2) \phi(\th)+ (2+\alpha-N)\phi(\s)}{4|\th-\s|^{N+\a-2}}\,dV(\s)\nonumber\\
&= - \int_{{S}} \frac{\phi(\th)-\phi(\s)- (\th-\s) \cdot \nabla \phi(\s)}{|\th-\s|^{N+\alpha}}\, dV(\s)\nonumber 
- \alpha \lambda_1 \phi(\th) \\
&\hspace{1cm}-\frac{N-\alpha-2}{2}\int_{{S}}\frac{(1-\s \cdot \th)
(\phi(\th)-\phi(\s))}{|\th-\s|^{N+\a}}\,dV(\s).
\label{h-prime-phi}
 \end{align}

 Next, for $\th\in S$, we let $B_\e(\th)$ be a ball on 
$S$ centered at $\th\in S$ with radius $\e\in (0,1)$. We have 
\begin{align}
 \int_{{S}}&\frac{\phi(\th)-\phi(\s)- (\th-\s) \cdot \nabla \phi(\s)}{|\th-\s|^{N+\alpha}}\, dV(\s) \nonumber\\
&=\lim_{\eps \to 0}  \int_{{S \setminus B_\eps(\th)}} \frac{\phi(\th)-\phi(\s)+ (\s-\th) \cdot \nabla \phi(\s)}
{|\th-\s|^{N+\alpha}}\, dV(\s),\label{lebesgue-pv}
\end{align}
and, for $\eps>0$ small, integrating by parts,  
\begin{align}
\int_{{S \setminus B_\eps(\th)}} \frac{(\s-\th) \cdot \nabla \phi(\s)}{|\th-\s|^{N+\alpha}}\, dV(\s)&=
\int_{\partial B_\eps(\th)} \frac{(\s-\th) \cdot \tilde{\nu}(\sigma)(\phi(\s)-\phi(\theta))}
{|\th-\s|^{N+\alpha}}\, d\ti{V}(\s)\nonumber\\
&\hspace{1cm}+ \int_{{S \setminus B_\eps(\th)}}(\phi(\th)-\phi(\s)) \div_{\sigma} \frac{P_{\s} (\s-\th)}
{|\th-\s|^{N+\alpha}}\,dV(\sigma). \label{integration-by-parts-5-1}
\end{align}
Here and in the following, $\partial B_\eps(\th)$ denotes the relative boundary of  $B_\eps(\th)$ in $S$,   
$d\ti{V}$ denotes the $(N-2)$-dimensional Hausdorff measure on $\partial B_\eps(\th)$ and $\tilde{\nu}$ the 
unit outer normal vector field of $ \de B_\e(\th)$ on $S$. Moreover, the differential operators 
$\nabla=\nabla_\sigma$, $\div_\sigma$, and $\Delta_\sigma$ 
on the sphere $S$ are all defined with respect to the standard metric on $S$, and 
\begin{equation}
  \label{eq:S-tangent-proj}
P_{\s}(\s-\th)=  \s- \th - \left((\s- \th) \cdot \s \right) \s = \s- \th -(1-\th \cdot \s)\s = (\th \cdot \s) \s - 
\th
\end{equation}
is the orthogonal projection of $\s-\th$ onto the tangent space $T_\s S$. Since 
$$
\phi(\s)-\phi(\theta)= \nabla \phi(\th) \cdot (\s-\th)+ O(|\s-\th|^{1+\beta}) \qquad \text{as $|\s-\th|\to 0$,}
$$
and, by antisymmetry    with respect to reflection at the axis $\R \th$, 
$$
\int_{\partial B_\eps(\th)}\!\! \frac{((\s-\th)\cdot \tilde{\nu}(\sigma))(\nabla \phi(\th) \cdot (\s-\th)) }
{|\th-\s|^{N+\alpha}}\, d\ti{V}(\s)=0, 
$$
 we find that 
 \begin{align}
&\lim_{\e\to0}  \int_{\partial B_\eps(\th)} \!\!\frac{(\s-\th)\cdot \tilde{\nu}(\sigma)(\phi(\s)-\phi(\theta))}
{|\th-\s|^{N+\alpha}}\, d\ti{V}(\s)=0. \label{boundary-integral-limit}
 \end{align}
 
Now we note that, by (\ref{eq:S-tangent-proj}), we have
$$
 -\nabla_\sigma(\sigma\cdot\theta) = -\{ \theta-(\theta\cdot\sigma)\sigma\} = P_\sigma(\sigma -\theta) \qquad \text{ on } S, 
$$
and therefore 
$$
\div_{\s} P_{\s}(\s-\th) = -\Delta_{\s}(\s \cdot \th) = (N-1) (\s \cdot \th).
$$
Consequently, 
\begin{align}
 \div_{\s}&\frac{P_{\s}(\s-\th)}{|\th-\s|^{N+\alpha}} = (N-1)\frac{\s \cdot \th}{|\th-\s|^{N+\alpha}} +
 P_{\s}(\s-\th) \cdot \nabla |\th-\s|^{-N-\alpha} \nonumber\\
&= (N-1)\frac{\s \cdot \th}{|\th-\s|^{N+\alpha}} - (N+\alpha) ((\th \cdot \s)\s - \th) \cdot \frac{\s- \th}
{|\th-\s|^{N+\alpha+2}} \nonumber\\
&= (N-1)\frac{\s \cdot \th}{|\th-\s|^{N+\alpha}} - \frac{N+\alpha}{2} \frac{1+\s \cdot \th}{|\th-\s|^{N+\alpha}}=
- \frac{(2+\alpha-N)  \s \cdot \th+(N+\alpha) }{2 |\th-\s|^{N+\alpha}}.\label{div-formula}
\end{align}
Combining (\ref{lebesgue-pv}), (\ref{integration-by-parts-5-1}), (\ref{boundary-integral-limit}) and 
(\ref{div-formula}), we conclude that 
\begin{align}
 \int_{{S}}& \frac{\phi(\th)-\phi(\s)- (\th-\s) \cdot \nabla \phi(\s)}{|\th-\s|^{N+\alpha}}\, dV(\s)\\
&= \frac{1}{2}\lim_{\eps \to 0}  \int_{{S \setminus B_\eps(\th)}} \frac{\left\{(2-N-\alpha) +(N-2-\alpha) 
\s \cdot \th \right\} (\phi(\th)-\phi(\s))}{|\th-\s|^{N+\alpha}}\, dV(\s)\
\end{align}
and thus, by \eqref{h-prime-phi}, 
\begin{align*}
 (Dh(0)\phi)(\th)=&  \frac{1}{2}\lim_{\eps \to 0} \int_{{S \setminus B_\eps(\th)}} \frac{\left\{(\alpha + N- 2) 
 + (2+\alpha-N) \s \cdot \th\right\} (\phi(\th)-\phi(\s))}{|\th-\s|^{N+\alpha}}\, dV(\s)  \\
&-\frac{N-\alpha-2}{2} \lim_{\eps \to 0} \int_{{S \setminus B_\eps(\th)}}\frac{(1-\s \cdot \th)(\phi(\th)-
\phi(\s))}{|\th-\s|^{N+\a}}\,dV(\s)- \alpha \lambda_1 \phi(\th) \\
=& \alpha \lim_{\eps \to 0}  \int_{{S \setminus B_\eps(\th)}} \frac{\phi(\th)-\phi(\s)}{|\th-\s|^{N+\alpha}}\, 
dV(\s) - \alpha \lambda_1 \phi(\th), 
\end{align*}
as claimed.

The last statement -- that $L_\alpha$ is continuous between $C^{1,\beta}(S) \to C^{\beta-\alpha}(S)$ --
is a direct consequence of our nonlinear result of Theorem \ref{sec:nmc-lattice-prop-Gk}.
\end{proof}

Next, we wish to study invertibility properties of the linearized operator $Dh(0)$ between suitably chosen 
function spaces. 
The following theorem is the main result of this section. 
\begin{theorem}\label{prop:Dh0-invert}
Let $\a\in(0,1)$, $\b\in (\a,1)$, and let the subspaces $X \subset C^{1,\beta} (S)$,  $Y \subset 
C^{\beta-\a} (S)$ be defined by \eqref{eq:def-space-X} and \eqref{eq:def-space-Y}.
Then, the restriction to $X$ of the linearized NMC operator \mbox{$Dh(0) : X\to Y$} is an isomorphism onto $Y$.
\end{theorem}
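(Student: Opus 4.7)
The plan is to combine spectral analysis with fractional Schauder regularity. By Lemma~\ref{lem:lin-NMC-op}, we have $Dh(0) = \alpha(L_\alpha - \lambda_1 I)$, and $L_\alpha$ is diagonalized by the spherical harmonics with eigenvalues $0 = \lambda_0 < \lambda_1 < \lambda_2 < \cdots$ given by \eqref{eq:l_k-0-sec-lin}. Since $X$ and $Y$ consist of even functions, their elements admit $L^2$-expansions involving only spherical harmonics of even degree. Crucially, $\lambda_1$ does not lie in the set $\{\lambda_{2k} : k \geq 0\}$, and the strict monotonicity of $k \mapsto \lambda_k$ yields the uniform bound
\begin{equation*}
\inf_{k \geq 0} |\lambda_{2k} - \lambda_1| \;\geq\; \min(\lambda_1,\,\lambda_2 - \lambda_1) \;>\; 0.
\end{equation*}

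First, I would establish injectivity. Suppose $\varphi \in X$ satisfies $(L_\alpha - \lambda_1 I)\varphi = 0$ in $C^{\beta-\alpha}(S)$, hence also in $L^2(S)$. Expanding $\varphi = \sum_{k \geq 0} \varphi_k$ with $\varphi_k \in \cE_k$ and applying \eqref{eq:L-a-sph-harm}, we obtain $(\lambda_k - \lambda_1)\varphi_k = 0$ for every $k$, so $\varphi_k = 0$ for $k \neq 1$, i.e., $\varphi \in \cE_1$. But $\cE_1$ is spanned by the odd coordinate functions, whereas $\varphi$ is even; thus $\varphi = 0$.

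Next, I would prove surjectivity. Given $f \in Y$, the candidate preimage is constructed via the spectral formula
\begin{equation*}
\varphi \;:=\; \sum_{k \geq 0,\, k \text{ even}} (\lambda_k - \lambda_1)^{-1} f_k,
\end{equation*}
which converges in $L^2(S)$ by the uniform lower bound above and solves $(L_\alpha - \lambda_1 I)\varphi = f$ distributionally. The main obstacle is then a regularity upgrade: to show that this $L^2$ solution $\varphi$ actually lies in $C^{1,\beta}(S)$, with an estimate
\begin{equation*}
\|\varphi\|_{C^{1,\beta}(S)} \;\leq\; C\bigl(\|f\|_{C^{\beta-\alpha}(S)} + \|\varphi\|_{L^2(S)}\bigr).
\end{equation*}
This is a Schauder-type estimate for the fractional operator $L_\alpha$ of order $\alpha$ on $S$, expressing a gain of $\alpha$ derivatives in Hölder scales. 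I would obtain it by localizing via a partition of unity on $S$, flattening each chart (or using stereographic projection), and invoking the Schauder regularity theory for the fractional Laplacian on $\R^N$ in Hölder spaces (in the spirit of Silvestre's estimates), absorbing the commutator terms arising from the change of variables as lower-order perturbations. Evenness of $\varphi$ follows automatically from the spectral construction, so $\varphi \in X$.

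Finally, having a continuous linear bijection $Dh(0): X \to Y$ between Banach spaces, the open mapping theorem yields continuity of the inverse, completing the proof that $Dh(0)$ is an isomorphism from $X$ onto $Y$. The only genuinely nontrivial step is the Hölder regularity estimate; the injectivity and the formal construction of the inverse are elementary consequences of the spectral structure of $L_\alpha$ and the parity restriction imposed by working in $X$.
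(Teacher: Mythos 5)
Your skeleton (spectral injectivity via parity, a Schauder-type upgrade for surjectivity, open mapping theorem) is the right one, and your injectivity argument is exactly the paper's. The gap lies in the surjectivity half, and it begins with the order bookkeeping: $L_\alpha$ is \emph{not} of order $\alpha$ on $S$. The sphere is $(N-1)$-dimensional and the kernel is $|\th-\s|^{-(N-1)-(1+\alpha)}$, so $L_\alpha$ is an operator of order $1+\alpha$; equivalently $\lambda_k\asymp k^{1+\alpha}$ by \eqref{eq:asymp-l_k}, i.e.\ $L_\alpha$ behaves like $(-\Delta_S)^{(1+\alpha)/2}$. A Schauder theory ``of order $\alpha$ with a gain of $\alpha$ derivatives'', taken literally, would only carry $f\in C^{\beta-\alpha}(S)$ to $\phi\in C^{\beta}(S)$, which does not place $\phi$ in $X\subset C^{1,\beta}(S)$; the estimate you display (a gain of $1+\alpha$) is the correct target, but it is not what your description of the localized model would deliver, and for a flattened operator of order $1+\alpha\in(1,2)$ the freezing/commutator argument is more delicate than in the sub-unit-order case. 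More fundamentally, an a priori estimate valid for smooth $\psi$ does not by itself upgrade your spectrally constructed $L^2$ solution to $C^{1,\beta}$: you need either a genuine regularity theorem applicable to $L^2$ (or bounded distributional/viscosity) solutions of the integro-differential equation, or an approximation scheme that already provides smooth solutions for smooth data together with the a priori bound and a limit argument. Neither ingredient is supplied in your outline, and you also need to identify the spectrally defined $\widetilde L_\alpha\phi$ with the pointwise principal-value operator once $\phi$ is regular (this is Lemma~\ref{sec:spectr-prop-line-sobolev}(ii) in the paper).

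For comparison, the paper closes exactly these gaps as follows. Lemma~\ref{sec:spectr-prop-line-sobolev} sets up the Sobolev scale $H^\rho(S)$ on which $\widetilde L_\alpha+\id$ is an isomorphism and shows that $L_\alpha+\id$ maps $C^\infty(S)$ bijectively onto itself; Lemma~\ref{sec:line-nmc-oper-1} proves the a priori bound $\|\psi\|_{C^{1,\beta}(S)}\le C\|L_\alpha\psi+\psi\|_{C^{\beta-\alpha}(S)}$ not by charts but by extending $\psi$ $0$-homogeneously to $\R^N$, identifying $(-\Delta)^{(1+\alpha)/2}u$ with $L_\alpha\psi$ up to controllable terms on an annulus, and invoking interior H\"older regularity for the fractional Laplacian of order $1+\alpha$, plus the interpolation Lemma~\ref{standard-interpolation}; Proposition~\ref{sec:spectr-prop-line-hoelder} then obtains the isomorphism $L_\alpha+\id:C^{1,\beta}(S)\to C^{\beta-\alpha}(S)$ by density (smooth data, smooth solutions, Cauchy sequences). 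Finally, instead of inverting $L_\alpha-\lambda_1$ directly through a spectral series, the paper observes that the inclusion $X\hookrightarrow Y$ is compact, so $L_\alpha-\lambda_1:X\to Y$ is a Fredholm operator of index zero, and the parity argument (your injectivity step) kills the kernel, giving the isomorphism at once. If you wish to keep your direct spectral construction of the preimage, you must correct the order of the operator and add either a bona fide regularity theorem for weak solutions or a density argument of the above type; as written, the proposal does not reach $C^{1,\beta}$.
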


The remainder of this section is devoted to the proof of this theorem.

In the following, for $k \in \N \cup \{0\}$, we let $P_k: L^2(S) \to L^2(S)$ denote the 
$\langle \cdot,\cdot \rangle_{L^2}$-orthogonal projections on $\cE_k$ -- the space of 
spherical harmonics of degree $k$. For $\rho \ge 0$, we then define the Sobolev space 
\begin{equation}
  \label{eq:def-hpe}
  H^{\rho}(S) := \{u \in L^2(S)\::\: \sum_{k= 0}^\infty  (1+k^2)^{\rho} \| P_k u\|^2_{L^2(S)} < \infty\},
\end{equation}
which is a Hilbert space with the scalar product 
\begin{equation}
  \label{eq:scp-hj}
(u,v) \mapsto   \sum_{k=0}^\infty (1+k^2)^{\rho} 
\langle P_k u, P_k v \rangle_{L^2}  \qquad \text{for $u,v \in H^\rho(S)$.}
\end{equation}

We need the following result on the mapping properties of the operator $L_\alpha$ with regard to the scale 
of Sobolev spaces $H^\rho(S)$.
\begin{lemma}
\label{sec:spectr-prop-line-sobolev} Let $\alpha \in (0,1)$ and $\beta \in (\alpha,1)$.
\begin{enumerate}
\item For given $\rho \ge 0$, the map 
\begin{equation}
  \label{eq:def-tilde-l-alpha}
v \mapsto \widetilde L_\alpha v := \sum_{k =0}^\infty \lambda_k P_k v= \sum_{k =0}^\infty L_\alpha P_k v
\end{equation}
defines a continuous linear operator $\widetilde L_\alpha: H^{\rho+1+\alpha}(S) \to H^{\rho}(S)$.\\
Moreover, $\widetilde L_\alpha+ \id: H^{\rho+1+\alpha}(S) \to H^{\rho}(S)$ is an isomorphism. 
\item We have $C^{1,\beta}(S) \subset H^{1+\alpha}(S)$ and
\begin{equation}
  \label{eq:l-alpha-tilde-l-alpha}
L_\alpha \psi = \widetilde L_\alpha \psi \qquad \text{in $L^2(S)$}\qquad \text{for $\psi \in C^{1,\beta}(S)$} 
\end{equation}
with $\tilde L_\alpha: H^{1+\alpha}(S) \to L^2(S)$ given in (\ref{eq:def-tilde-l-alpha}).
\item The operator $L_\alpha + \id$ restricts to a bijective map $C^\infty(S) \to C^\infty(S)$.
\end{enumerate}
\end{lemma}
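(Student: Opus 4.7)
My plan is to treat the three parts as a cascade: a spectral bound in (i), identification of the spectral and integral operators in (ii), and a bootstrap to smoothness in (iii).

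For part (i), the key analytic input is the asymptotic $\lambda_k \asymp k^{1+\alpha}$ as $k \to \infty$. Setting $z := k + (N-\alpha-2)/2$ in formula \eqref{eq:l_k-0} reduces the leading term to $\Gamma(z+1+\alpha)/\Gamma(z)$, and the classical asymptotic $\Gamma(z+s)/\Gamma(z) = z^{s}(1+O(1/z))$ then yields the claim. Continuity of $\widetilde{L}_\alpha : H^{\rho+1+\alpha}(S) \to H^\rho(S)$ is immediate from the spectral definition since $\lambda_k^{2}(1+k^{2})^\rho \le C(1+k^{2})^{\rho+1+\alpha}$. For the isomorphism statement, since $\lambda_k \ge 0$ one has $\lambda_k+1 \ge 1$, so the operator is injective; given $g = \sum_k P_k g \in H^\rho(S)$, the formal solution $u := \sum_k (\lambda_k+1)^{-1} P_k g$ belongs to $H^{\rho+1+\alpha}(S)$ by the two-sided bound $1 \le \lambda_k+1 \le C(1+k^{2})^{(1+\alpha)/2}$.

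For part (ii), the embedding $C^{1,\beta}(S) \hookrightarrow H^{1+\alpha}(S)$ is standard once $\beta > \alpha$: any $\psi \in C^{1,\beta}(S)$ has bounded first derivatives and satisfies a Gagliardo-type estimate whose integrand is locally controlled by $|\theta-\sigma|^{2(\beta-\alpha)-(N-1)}$, which is integrable on $S \times S$. To prove $L_\alpha \psi = \widetilde{L}_\alpha \psi$, I would test both sides against an arbitrary $Y_k \in \cE_k$. Truncating the PV-integral at geodesic distance $\varepsilon$ and applying Fubini on the truncated double integral (absolutely convergent thanks to $|\psi(\theta)-\psi(\sigma)| \le C|\theta-\sigma|$ and the analogous bound for $Y_k$), then letting $\varepsilon \to 0$, one obtains
\[
\int_S Y_k \, L_\alpha\psi \, dV = \frac{1}{2}\iint_{S\times S} \frac{(\psi(\theta)-\psi(\sigma))(Y_k(\theta)-Y_k(\sigma))}{|\theta-\sigma|^{N+\alpha}}\, dV(\theta)\, dV(\sigma),
\]
which is symmetric in $\psi$ and $Y_k$. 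Hence $\langle L_\alpha\psi, Y_k\rangle_{L^{2}} = \langle \psi, L_\alpha Y_k\rangle_{L^{2}} = \lambda_k \langle \psi, Y_k\rangle_{L^{2}}$ by \eqref{eq:L-a-sph-harm}, so $P_k L_\alpha\psi = \lambda_k P_k\psi$ for every $k$, which is exactly $P_k \widetilde{L}_\alpha \psi$.

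For part (iii), I combine (i) and (ii). If $\psi \in C^\infty(S) = \bigcap_\rho H^\rho(S)$, then $\widetilde{L}_\alpha\psi \in \bigcap_\rho H^\rho(S) = C^\infty(S)$ by (i) and the Sobolev embedding on the compact manifold $S$; by (ii) this coincides with $L_\alpha\psi$, so $L_\alpha$ preserves $C^\infty(S)$. Bijectivity of $L_\alpha + \id$ on $C^\infty(S)$ is then direct: given $f \in C^\infty(S)$, part (i) produces a unique $u \in \bigcap_\rho H^{\rho+1+\alpha}(S) = C^\infty(S)$ with $(\widetilde{L}_\alpha + \id)u = f$, and by (ii) this equals $(L_\alpha + \id)u$; injectivity follows from $\lambda_k + 1 \ge 1 > 0$. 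The main obstacle I anticipate is the Fubini/symmetrization step in (ii), where the passage $\varepsilon \to 0$ inside the double integral for a hypersingular kernel of order $N+\alpha$ must be carefully justified; this is precisely where the assumption $\beta > \alpha$ enters, guaranteeing both the Gagliardo embedding and the absolute convergence of the symmetrized form.
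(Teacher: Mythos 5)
Your argument is correct; parts (i) and (iii) coincide with the paper's own proof (same Gamma-function asymptotics giving $\lambda_k\asymp k^{1+\alpha}$, same spectral inversion, same identification $C^\infty(S)=\bigcap_{\rho}H^\rho(S)$), but your part (ii) takes a genuinely different route. The paper first proves $L_\alpha\psi=\widetilde L_\alpha\psi$ for \emph{smooth} $\psi$, using the $C^m$-convergence of the spherical harmonic expansion together with the continuity $L_\alpha:C^{1,\beta}(S)\to C^{\beta-\alpha}(S)$ from Lemma~\ref{lem:lin-NMC-op}; it then gets the a priori bound $\|\psi\|_{H^{1+\alpha}(S)}\le C\|\psi\|_{C^{1,\beta}(S)}$ from part (i) and extends both the embedding and the identity to $C^{1,\beta}(S)$ by approximation by smooth functions. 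You instead work directly on $C^{1,\beta}(S)$: truncate, symmetrize the double integral, pass to the limit, and use the symmetry of the quadratic form together with $L_\alpha Y_k=\lambda_k Y_k$ to conclude $P_kL_\alpha\psi=\lambda_kP_k\psi$ for every $k$, while proving the embedding separately by a Gagliardo-type estimate. Both are sound. Your duality argument has the advantage of bypassing the paper's density step (smooth functions are not dense in the $C^{1,\beta}$-norm, so that step is best read with a slightly smaller H\"older exponent), and it needs only the pointwise bound on the truncated integrals rather than the full mapping property of $L_\alpha$; the price is that your embedding step imports the standard equivalence between the spectral definition \eqref{eq:def-hpe} of $H^{1+\alpha}(S)$ and a chart/Gagliardo characterization, which the paper avoids. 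In fact you could drop the Gagliardo step entirely: once $P_kL_\alpha\psi=\lambda_kP_k\psi$ and $L_\alpha\psi\in L^\infty(S)\subset L^2(S)$ (which follows from the same uniform bound on the truncated integrals that you need for dominated convergence), the asymptotics $\lambda_k\asymp k^{1+\alpha}$ give $\sum_k(1+k^2)^{1+\alpha}\|P_k\psi\|_{L^2(S)}^2<\infty$, i.e.\ $C^{1,\beta}(S)\subset H^{1+\alpha}(S)$ directly with respect to the paper's definition.
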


\begin{proof}
(i) Since 
$$
\lim_{\tau \to +\infty}\frac{\Gamma(\tau+\varrho)}{\Gamma(\tau) \tau^\varrho}=1 \qquad \textrm{  for all } 
\varrho \in\R
$$ 
 (see e.g. \cite[Page 15, Problem 7]{Lebedev}), we deduce from \eqref{eq:l_k-0-sec-lin} that  
\be \label{eq:asymp-l_k}
\lim_{k\to +\infty } \frac{\l_k}{ k^{1+\a}} = \frac{\pi^{(N-1)/2}    \Gamma((1-\a)/2)    }{  (1+\a)2^\a      
\Gamma((N+\a)/2) } \quad \in \quad (0,\infty). 
\ee  
Using this and the fact that $\lambda_k \ge 0$  for all $k \in \N \cup \{0\}$, we infer that $\widetilde L_\alpha$, 
as defined in (\ref{eq:def-tilde-l-alpha}), is a well defined continuous linear operator 
$H^{\rho+1+\alpha}(S) \to H^{\rho}(S)$, and that
$\widetilde L_\alpha+ \id$ is an isomorphism.

(ii) In the following, we let $C_1,C_2, \dots$ denote positive constants depending only on $N,\alpha$ and $\beta$. 
For $\psi \in C^{1,\beta}(S)$, by Lemma~\ref{lem:lin-NMC-op},  we have 
$$
  \|L_\alpha \psi\|_{L^2(S)}\leq C_1 \|L_\alpha \psi\|_{C^{\b-\a}(S)}\leq  C_2 \|\psi \|_{ C^{1,\beta}(S) }
$$
and thus  
\begin{equation}
\label{eq:C-infty}
\|L_\alpha \psi\|_{L^2(S)}+ \|\psi\|_{L^2(S)} \le C_3\|\psi \|_{ C^{1,\beta}(S) }. 
\end{equation}
Next we remark that, as a consequence of the spectral representation of the Laplace-Beltrami 
operator on $S$, we have   
\begin{equation}
  \label{C-infty-H-infty}
C^\infty(S)= \bigcap \limits_{\rho \in \N}H^\rho(S)= \bigcap \limits_{\rho \ge 0}H^\rho(S). 
\end{equation}
Moreover, for $\psi \in C^\infty(S)$ the series $\sum_{k =0}^\infty \lambda_k P_k \psi$ converges in 
$L^2(S)$ and the series $\sum_{k=0}^\infty P_k \psi$ converges in $C^m(S)$ for every $m \in \N$. 
From this and (\ref{eq:C-infty}) we deduce that 
$$
L_\alpha \psi = \lim_{\ell \to \infty} L_\alpha \sum_{k=0}^\ell  P_k \psi = \lim_{\ell \to \infty} 
\sum_{k=0}^\ell L_\alpha P_k \psi = \widetilde L_\alpha \psi \quad \text{for $\psi \in C^\infty(S)$.}
$$
Combining this with (i) and (\ref{eq:C-infty}), we find that 
\begin{equation}
  \label{eq:C-infty-1}
\|\psi\|_{H^{1+\alpha}(S)}\le  C_4 \Bigl(\|L_\alpha \psi\|_{L^2(S)}+ \|\psi\|_{L^2(S)}\Bigr)\le C_5 
\|\psi \|_{ C^{1,\beta}(S) }\qquad \text{for $\psi \in C^\infty(S)$.}
\end{equation}
Next, let $\psi \in C^{1,\beta}(S)$, and let $\psi_n \in C^\infty(S)$, $n \in \N$ satisfy $\psi_n \to \psi$ 
in $C^{1,\beta}(S)$.  Then (\ref{eq:C-infty-1}) implies that $(\psi_n)_{n\in\N}$ is a Cauchy sequence in 
$H^{1+\alpha}(S)$, and by completeness this forces $\psi \in H^{1+\alpha}(S)$. Moreover, by passing to the 
limit, we deduce that $\psi_n \to \psi$ in $H^{1+\alpha}(S)$, which implies that 
$$
\widetilde L_\alpha  \psi= \lim_{n \to \infty} \widetilde L_\alpha \psi_n  = \lim_{n \to \infty} L_\alpha 
\psi_n \qquad \text{in $L^2(S)$.}
$$
Since moreover $L_\alpha \psi = \lim \limits_{n \to \infty} L_\alpha \psi_n$ in $C^{\beta-\alpha}(S)$ 
by Lemma~\ref{lem:lin-NMC-op}, we obtain  \eqref{eq:l-alpha-tilde-l-alpha}.

(iii) This follows immediately from (i), (ii) and (\ref{C-infty-H-infty}).
\end{proof}

The following lemma provides an analogue for $L_\a+id$ of the classical interior H\"older regularity estimate 
for the classical fractional Laplacian. In the proof, we will apply a series of 
changes of variables to reduce our problem to one where regularity for the classical fractional
Laplacian can be applied.

\begin{lemma}
\label{sec:line-nmc-oper-1}
Let $\alpha \in (0,1)$, $\beta \in (\alpha,1)$. Then there exists a constant $C=C(N,\alpha,\beta)>0$ such that 
\begin{equation}
  \label{eq:c-infty-est}
\|\psi\|_{C^{1,\beta}(S)} \le C \|L_\alpha \psi + \psi\|_{C^{\beta-\alpha}(S)} \qquad \text{for all 
$\psi \in C^\infty(S).$}  
\end{equation}
\end{lemma}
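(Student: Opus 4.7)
The approach is to reduce the global estimate on the compact manifold $S$ to the classical interior Hölder regularity for the fractional Laplacian on $\R^{N-1}$. Since the kernel of $L_\alpha$ has singularity of order $N+\alpha = (N-1) + (1+\alpha)$ and $S$ has dimension $N-1$, the principal part of $L_\alpha$ in a chart corresponds to $(-\Delta_{\R^{N-1}})^{s}$ with $s := (1+\alpha)/2$, so $2s = 1+\alpha$ provides the expected regularity gain $(1+\beta)-(\beta-\alpha)=1+\alpha$, and the non-integer condition $2s+(\beta-\alpha) = 1+\beta \notin \mathbb{Z}$ is built in. First I would cover $S$ by finitely many coordinate patches; on a patch with a smooth chart $\Phi : B_r^{N-1} \to U \subset S$ with $\Phi(0) = \theta_0$ and $d\Phi(0) = \mathrm{Id}$ (e.g., geodesic normal coordinates, or the parametrization via the rotation map $R_\theta$ from Remark~\ref{example-rotations}), I would set $\tilde\psi := \psi \circ \Phi$ and cut it off outside $B_r$ to obtain $\Psi$ compactly supported in $\R^{N-1}$.

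Splitting the principal-value integral defining $(L_\alpha \psi)\circ \Phi$ into a near part ($\sigma \in U$) and a far part ($\sigma \in S\setminus U$), and using the expansions
\[
|\Phi(y)-\Phi(z)|^{-N-\alpha} = |y-z|^{-N-\alpha}\bigl(1 + O(|y|^2+|z|^2)\bigr), \qquad dV(\Phi(z)) = \bigl(1 + O(|z|^2)\bigr)\,dz,
\]
I would extract a decomposition of the form
\[
(L_\alpha\psi)(\Phi(y)) = (-\Delta_{\R^{N-1}})^{s}\Psi(y) + K\Psi(y) + E(y) \qquad (y \in B_{r/2}),
\]
where $K$ has a kernel strictly less singular than $|y-z|^{-(N-1)-(1+\alpha)}$ (hence of order $<1+\alpha$) and $E \in C^\infty$ collects the far-field contribution, so that $K\Psi$ and $E$ are bounded in $C^{\beta-\alpha}$ by $\|\psi\|_{L^\infty(S)}$ up to a constant. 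Applying the standard interior Hölder estimate for $(-\Delta)^s$ on $\R^{N-1}$ (Silvestre, or Caffarelli--Silvestre),
\[
\|\Psi\|_{C^{1,\beta}(B_{r/4})} \le C\bigl(\|(-\Delta)^s \Psi\|_{C^{\beta-\alpha}(B_{r/2})} + \|\Psi\|_{L^\infty(\R^{N-1})}\bigr),
\]
and patching over the finite cover through a partition of unity would yield
\[
\|\psi\|_{C^{1,\beta}(S)} \le C\bigl(\|L_\alpha\psi+\psi\|_{C^{\beta-\alpha}(S)} + \|\psi\|_{L^\infty(S)}\bigr).
\]

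To eliminate the $L^\infty$ term I would appeal to Lemma~\ref{sec:spectr-prop-line-sobolev}(i): the isomorphism $L_\alpha + \mathrm{id} : H^{1+\alpha}(S) \to L^2(S)$ yields $\|\psi\|_{H^{1+\alpha}(S)} \le C\|L_\alpha\psi + \psi\|_{C^{\beta-\alpha}(S)}$, and finitely many iterations of the scale $H^{\rho+1+\alpha}(S)\to H^{\rho}(S)$ together with the Sobolev embedding $H^\rho(S) \hookrightarrow L^\infty(S)$ for $\rho > (N-1)/2$ upgrade this to the desired $L^\infty$ control of $\psi$. The hard part will be the second paragraph: rigorously verifying that the remainder $K$ is of strictly lower order than $1+\alpha$, which requires controlling both the Taylor expansion of the spherical kernel against its Euclidean counterpart near the diagonal and the error introduced by cutting off $\tilde\psi$ at $\partial B_r$. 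A conceptually cleaner but machinery-heavier route would interpret $L_\alpha$ as an elliptic pseudodifferential operator on $S$ of order $1+\alpha$ with scalar principal symbol and apply the classical Hölder mapping properties of such $\Psi$DOs.
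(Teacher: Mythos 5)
Your overall strategy (reduce to interior H\"older regularity for a Euclidean fractional Laplacian of order $1+\alpha$) is the same as the paper's in spirit, but the geometric reduction differs: the paper avoids charts and partitions of unity altogether by extending $\psi$ to the $0$-homogeneous function $u(x)=\psi(x/|x|)$ and computing $(-\Delta)^{(1+\alpha)/2}u$ on the annulus $A=\{\tfrac12\le|x|\le2\}\subset\R^N$, which a polar-coordinate computation identifies with $Q(0,|x|)\,(g-\psi)$ plus remainders driven by the less singular kernels $|\th-\s|^{-(N+\alpha-1)}$ and $|\th-\s|^{-(N+\alpha-2)}$; interior regularity is then applied in $\R^N$ (Theorem~1.3 of \cite{DSV}). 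Your chart-based comparison of $L_\alpha$ with $(-\Delta_{\R^{N-1}})^{(1+\alpha)/2}$, or the pseudodifferential viewpoint, could in principle be carried out, but two steps of your scheme do not hold as written.

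The genuine gap is the elimination of the lower-order term. The proposed ``finitely many iterations of the scale $H^{\rho+1+\alpha}(S)\to H^{\rho}(S)$ plus Sobolev embedding'' cannot produce $\|\psi\|_{L^\infty(S)}\le C\|g\|_{C^{\beta-\alpha}(S)}$ in general: the equation is solved once, and the Sobolev regularity of $\psi$ is capped by that of $g=L_\alpha\psi+\psi$, which lies only in $C^{\beta-\alpha}(S)\subset H^{\rho}(S)$ for $\rho<\beta-\alpha$; hence the bootstrap stalls at $\psi\in H^{1+\beta-\eps}(S)$, and since $1+\beta<2\le(N-1)/2$ for $N\ge5$ (and $1+\beta\le 3/2=(N-1)/2$ for $N=4$, $\beta\le 1/2$) there is no embedding into $L^\infty(S)$ — there is nothing to iterate on, as $g$ never gains regularity. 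The correct closing step is the paper's: interpolate $\|\psi\|_{C^{\beta}(S)}\le\eps\|\psi\|_{C^{1}(S)}+K\|\psi\|_{L^2(S)}$ (Lemma~\ref{standard-interpolation}) and bound $\|\psi\|_{L^2(S)}\le\|\psi\|_{H^{1+\alpha}(S)}\le C\|g\|_{L^2(S)}$ via Lemma~\ref{sec:spectr-prop-line-sobolev}(i), absorbing the $\eps$-term into the left-hand side. Relatedly, your claim that the chart remainders $K\Psi$ and the cutoff/far-field errors are controlled in $C^{\beta-\alpha}$ by $\|\psi\|_{L^\infty(S)}$ alone is too optimistic: these operators act on differences $\psi(\th)-\psi(\s)$, and estimating the H\"older seminorm of their output requires a H\"older norm of $\psi$ — compare \eqref{eq:G_A}, where the analogous remainder in the paper is bounded by $\|\psi\|_{C^{\beta}(S)}$. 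This second point is harmless once you adopt the interpolation closing step (the extra $\|\psi\|_{C^\beta(S)}$ is absorbed the same way), but as stated your scheme does not close.
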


To establish this lemma we need a standard interpolation estimate. 
We include a simple proof for the convenience of the reader.

\begin{lemma}
\label{standard-interpolation}
 Let $\beta \in (0,1)$. Then for every $\eps>0$ there exists $K=K(\eps,N,\beta)>0$ such that 
 \begin{equation}
  \label{eq:interpol-0}
\|\psi \|_{ C^{\b}(S) } \le \eps \|\psi \|_{ C^{1}(S) }+ K \|\psi\|_{L^2(S)} \qquad \text{for every $\psi \in C^1(S)$.}
 \end{equation}
 \end{lemma}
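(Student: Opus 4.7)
The plan is to decompose the $C^\beta$ norm as $\|\psi\|_{C^\beta(S)} = \|\psi\|_{L^\infty(S)} + [\psi]_{C^\beta(S)}$, control the H\"older seminorm by a scale-splitting argument using both the Lipschitz and the $L^\infty$ bound, and then close the loop with an auxiliary $L^\infty$--$L^2$ interpolation that absorbs the remaining $\|\psi\|_{L^\infty}$ into a small multiple of $\|\psi\|_{C^1}$ plus a constant times $\|\psi\|_{L^2}$. I do not expect any genuine obstacle here; this is a classical Landau-type interpolation, and the only care needed is to order the choice of parameters so that all small coefficients land on $\|\psi\|_{C^1}$.

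First I would observe that, for any two points $\sigma,\tau\in S$ and any $\psi\in C^1(S)$, the elementary bound
\begin{equation*}
|\psi(\sigma)-\psi(\tau)|\le\min\bigl(\|\psi\|_{C^1(S)}\,d(\sigma,\tau),\,2\|\psi\|_{L^\infty(S)}\bigr)
\end{equation*}
(where $d$ denotes the geodesic distance on $S$) gives, for every $\delta\in(0,1]$, the seminorm bound
\begin{equation*}
[\psi]_{C^\beta(S)}\le \delta^{1-\beta}\|\psi\|_{C^1(S)}+2\delta^{-\beta}\|\psi\|_{L^\infty(S)},
\end{equation*}
by separately handling pairs with $d(\sigma,\tau)\le\delta$ and pairs with $d(\sigma,\tau)>\delta$.

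Next I would establish the auxiliary interpolation: for every $\eta>0$ there exists $M=M(\eta,N)>0$ with
\begin{equation*}
\|\psi\|_{L^\infty(S)}\le \eta\,\|\psi\|_{C^1(S)}+M\,\|\psi\|_{L^2(S)}\qquad\text{for all }\psi\in C^1(S).
\end{equation*}
This can be proved by a compactness argument: if it failed for some $\eta_0>0$, one would obtain a sequence $\psi_n\in C^1(S)$ with $\|\psi_n\|_{L^\infty(S)}=1$, $\|\psi_n\|_{C^1(S)}\le 1/\eta_0$, and $\|\psi_n\|_{L^2(S)}\to 0$; the Arzel\`a--Ascoli theorem then yields a uniform limit of $L^2$-norm zero and $L^\infty$-norm one, a contradiction. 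Alternatively one may argue directly via a mean-value representation $\psi(\sigma)=\fint_{B_r(\sigma)}\psi+O(\|\psi\|_{C^1}r)$ combined with Cauchy--Schwarz, choosing $r$ as a function of $\eta$.

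Finally, given $\eps>0$, I would pick $\delta\in(0,1]$ so small that $\delta^{1-\beta}\le\eps/4$, obtaining
\begin{equation*}
[\psi]_{C^\beta(S)}\le \tfrac{\eps}{4}\,\|\psi\|_{C^1(S)}+2\delta^{-\beta}\,\|\psi\|_{L^\infty(S)}.
\end{equation*}
Then, applying the auxiliary interpolation with $\eta:=\eps/(4(1+2\delta^{-\beta}))$, one gets
\begin{equation*}
(1+2\delta^{-\beta})\|\psi\|_{L^\infty(S)}\le \tfrac{\eps}{4}\|\psi\|_{C^1(S)}+(1+2\delta^{-\beta})M\,\|\psi\|_{L^2(S)}.
\end{equation*}
Adding the last two displayed inequalities yields
\begin{equation*}
\|\psi\|_{C^\beta(S)}=\|\psi\|_{L^\infty(S)}+[\psi]_{C^\beta(S)}\le \tfrac{\eps}{2}\|\psi\|_{C^1(S)}+K\,\|\psi\|_{L^2(S)}
\end{equation*}
for a constant $K=K(\eps,N,\beta)>0$, which is stronger than the required estimate \eqref{eq:interpol-0}. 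The only bookkeeping step is to make sure that all the ``$\eps$-small'' coefficients are attached to $\|\psi\|_{C^1}$, which is ensured by the ordering above: choose $\delta$ first (depending on $\eps,\beta$), and then choose $\eta$ in the auxiliary interpolation depending on $\delta$ and $\eps$.
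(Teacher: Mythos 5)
Your proof is correct and follows essentially the same route as the paper: the same scale-splitting bound on the H\"older seminorm, followed by an $L^\infty \le \eps\, C^1 + K\, L^2$ interpolation, which the paper establishes exactly as in your second alternative (averaging over a small ball $B_r(\theta)$ on $S$ and Cauchy--Schwarz). The only divergence is that your primary suggestion for the auxiliary step is a compactness/Arzel\`a--Ascoli argument, which is valid but gives a non-explicit constant $K$, whereas the paper's averaging argument produces $K$ explicitly.
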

 
 \begin{proof}
In the following, we let $C_1,C_2,\dots,$ denote positive constants which only depend on~$N$. 
As a consequence of \eqref{eq:gamma-dot-bound}, we have 
\begin{equation}
  \label{eq:interpol-1}
|\psi(\th)-\psi(\s)| \le C_1 \|\psi \|_{ C^{1}(S) } |\th-\s| \qquad \text{for $\th,\s \in S$,}  
\end{equation}
and thus 
$$
\frac{|\psi(\th)-\psi(\s)|}{|\th-\s|^\beta}  \le C_1|\th- \s|^{1-\b} \|\psi \|_{ C^{1}(S) } \le C_1 \delta^{1-\b}
\|\psi \|_{ C^{1}(S) } \qquad \text{for $\th,\s \in S$ with $|\th-\s| \le \delta$.}  
$$
Moreover, 
$$
\frac{|\psi(\th)-\psi(\s)|}{|\th-\s|^\beta}  \le \frac{2}{\delta^\beta}\|\psi\|_{L^\infty(S)} \qquad \text{for 
$\th,\s \in S$ with $|\th-\s| \ge \delta$.}  
$$
Combining these inequalities, we find that    
\begin{equation}
  \label{eq:interpol-4}
\frac{|\psi(\th)-\psi(\s)|}{|\th-\s|^\beta}  \le C_1 \delta^{1-\b}\|\psi \|_{ C^{1}(S) } + \frac{2}{\delta^\beta}
\|\psi\|_{L^\infty(S)} \qquad \text{for $\th,\s \in S$.}  
\end{equation}

Next, for $0<r<1$ and $\th \in S$, we let $d_r$ denote the $(N-1)$-dimensional volume of the ball  $B_r(\th) $ 
on $S$, which clearly does not depend on $\th$. By (\ref{eq:interpol-1}) we then have 
\begin{align*}
\Bigl|\psi(\th)-\frac{1}{d_r}\int_{B_r(\th)}\psi(\s) & \,d V(\s)\Bigr| \le \frac{1}{d_r}\int_{B_r(\th)}| 
\psi(\th)-\psi(\s)|\,d V(\s)\\
& \le  \frac{C_1}{d_r}\|\psi \|_{ C^{1}(S) } \int_{B_r(\th)}|\th-\s|\,d V(\s) \le  r C_2 \|\psi\|_{C^1(S)}  
\qquad \text{for all $\th \in S$},  
\end{align*}
and thus 
\begin{align}
\|\psi\|_{L^\infty(S)} &\le r C_2 \|\psi\|_{C^1(S)} + \max_{\th \in S} \Bigl| \frac{1}{d_r}\int_{B_r(\th)}
\psi(\s)\,d V(\s)\Bigr| \nonumber\\
&\le r C_2 \|\psi\|_{C^1(S)} + \frac{\|\psi\|_{L^1(S)}}{d_r} \le r C_2 \|\psi\|_{C^1(S)} + \frac{|S|^{1/2}}{d_r} 
\|\psi\|_{L^2(S)}.  \label{eq:interpol-5}
\end{align}
Combining (\ref{eq:interpol-4}) and (\ref{eq:interpol-5}), we find that 
\begin{align*}
\|\psi \|_{ C^{\b}(S) }&= \sup_{\stackrel{\th, \s \in S}{\th \not = \s}}\frac{|\psi(\th)-\psi(\s)|}{|\th-\s|^\beta} 
+    \|\psi\|_{L^\infty(S)} \le C_1 \delta^{1-\b}\|\psi \|_{ C^{1}(S) } + \Bigl(\frac{2}{\delta^\beta}+1\Bigr) 
\|\psi\|_{L^\infty(S)}\\
&\le \Bigl\{C_1 \delta^{1-\b} + r C_2\Bigl(\frac{2}{\delta^\beta}+1\Bigr)\Bigr\}  \|\psi\|_{C^1(S)}
+ \frac{|S|^{1/2}}{d_r}\Bigl(\frac{2}{\delta^\beta}+1\Bigr) \|\psi\|_{L^2(S)}.
\end{align*}

For a given $\eps>0$, we may now choose $\delta>0$ such that $C_1 \delta^{1-\b} \le \frac{\eps}{2}$ and then 
$r>0$ such that $r C_2\Bigl(\frac{2}{\delta^\beta}+1\Bigr)\le \frac{\eps}{2}$. Then (\ref{eq:interpol-0}) 
follows with $K = \frac{|S|^{1/2}}{d_r}\Bigl(\frac{2}{\delta^\beta}+1\Bigr)$. 
 \end{proof}
 
We can now give the 
 
\begin{proof}[Proof of Lemma \ref{sec:line-nmc-oper-1}]
In the following, the latter $C$ stands for positive constants which may change from line to line but only 
depend on $N, \alpha$ and $\beta$. Let $\psi \in C^\infty(S)$ and $g:= L_\alpha \psi + \psi \in C^\infty(S)$. 
We define $u \in C^\infty(\R^N \setminus 0) \cap L^\infty(\R^N)$ by 
$u(x)=\psi(x/|x|)$ for $x \not = 0$.   For $x\in \R^N\setminus\{0\}$,  a change of variable in polar coordinates 
gives, with $r=|x|$ and $\th = \frac{x}{|x|}$, 
\begin{align*}
(-\D)^{(1+\a)/2} u(x)&=C \int_{\R^N}\frac{u(x)-u(y)}{|x-y|^{N+1+\a}}\,dy\\
&= C \int_0^\infty\int_{{S}}\frac{\psi(\th)-\psi(\s)}{( (\rho-r)^2 + r\rho|\th-\s|^2   )^{(N+1+\a)/2}}\rho^{N-1}\, 
dV(\s) d\rho.
\end{align*}
We make the change of variable $t=\frac{ \rho-r}{|\th-\s |}$ to get   
\begin{equation}
\label{eq:first-exprs-Ds-polar}
(-\D)^{(1+\a)/2} u(x)= C \int_{{S}}\frac{\psi(\th)-\psi(\s)}{      |\th-\s|^{N+\a} } \int_0^\infty
\frac{(t|\th-\s|+r )^{N-1} }{( t^2 + r(t|\th-\s|+r )  )^{(N+1+\a)/2}}\, dt dV(\s).
\end{equation}
To further simplify this expression, we define the function 
$$
Q: [0,\infty) \times (0,\infty) \to \R, \qquad Q(a,b):=  C \int_0^\infty\frac{(ta+b)^{N-1}}
{( t^2 + b(ta+b )  )^{(N+1+\a)/2}}\, dt.
$$
Using the change of variable $s=\frac{t}{b}$, we see that 
$$
Q(a,b)= b^{-1-\a}C   \int_0^\infty\frac{(s a+1)^{N-1}}{(s^2 + sa+1   )^{(N+1+\a)/2}}\, ds.
$$
From this we see that $Q \in C^\infty([0,\infty) \times (0,\infty))$. Moreover,  from \eqref{eq:first-exprs-Ds-polar} 
we get that 
\begin{align*}
(-\D)^{(1+\a)/2} u(x) &=  \int_{{S}}\frac{\psi(\th)-\psi(\s)}{      |\th-\s|^{N+\a} }  Q(|\th-\s|),r)        
dV(\s) \\
&= Q(0,r)\, L_\a \psi  (\th)   +  \int_{{S}}\frac{\psi(\th)-\psi(\s)}{      |\th-\s|^{N+\a} }\Bigl(Q(|\th-\s|),r)-  
Q(0,r)  \Bigr)       dV(\s)\\
&= Q(0,r)\, (g(\th)-\psi(\th))  +  \int_{{S}}\frac{\psi(\th)-\psi(\s)}{      |\th-\s|^{N+\a-1} } \int_0^1 \de_aQ(\t |\th-\s|,r)
\,d\t       dV(\s) .
\end{align*}

Next, we define 
$$
Q_{g,\psi}(x):= Q(0,|x|) ( g(x/|x|)-\psi(x/|x|))
$$
and
$$
G_\psi(x)=   \int_{{S}}\frac{\psi(\th)-\psi(\s)}{      |\th-\s|^{N+\a-1} } \int_0^1 \de_a Q(\t |\th-\s|,r)\,d\t      
dV(\s) \quad \text{for $x\in\R^N \setminus \{0\}$, $r=|x|$, $\th=\frac{x}{|x|}$,}
$$ 
so that 
\be \label{eq:Dsu=g--G}
 (-\D)^{(1+\a)/2} u(x) =  Q_{g,\psi}(x)+ G_\psi(x) \qquad \text{for $x \in \R^N \setminus \{0\}$.}
\ee
We also put $A:= \{x \in \R^N\::\: \frac{1}{2}\le |x| \le 2\}$. 
We have $Q_{g,\psi} \in C^{{\beta-\alpha}}(A)$ and
\begin{equation}
  \label{eq:r-1-g}
\|Q_{g,\psi}\|_{ C^{ \b-\a}(A)} \le C  \left( \|g\|_{ C^{\b-\a}(S) } + \Vert\psi\Vert_{C^{\b-\a}(S)} \right) . 
\end{equation}

Next we show that 
\begin{equation}
  \label{eq:G_A}
\text{${G_\psi}\in  C^{ \b-\a}(A)\quad $ with $\quad  \|G_\psi \|_{ C^{ \b-\a}(A)} \leq C   \|\psi \|_{ C^{\b}(S) }$.}
\end{equation}
To this end, we write 
\begin{equation*}
G_\psi(x)=\int_{{S}}\frac{\psi(\th)-\psi(\s)}{      |\th-\s|^{N+\a-1} } \int_0^1 \de_aQ(\t |\th-\s|,r)\,d\t      
dV(\s)=   \de_aQ(0,r)L_{\alpha-1} \psi(\th)+ \widetilde G_\psi(x)
\end{equation*}
with 
$$
L_{\alpha-1} \psi(\th):=\int_{{S}}\frac{\psi(\th)-\psi(\s)}{      |\th-\s|^{N+\a-1} } \,      dV(\s) 
$$
and 
$$
\widetilde G_\psi(x):= \int_{{S}}\frac{\psi(\th)-\psi(\s)}{      |\th-\s|^{N+\a-2} } \int_0^1 \int_0^1\t \de_a^2Q 
(\l\t |\th-\s|,r)\,d\t  d\l      dV(\s) 
$$
for $\psi \in S$. We will show that 
\begin{equation}
  \label{eq:L-alpha-1-1}
\|L_{\alpha-1} \psi \|_{ C^{ \b-\a}(S)} \leq C   \|\psi \|_{ C^{\b}(S) } 
\end{equation}
and that 
\begin{equation}
  \label{eq:G1}
\|\widetilde G_\psi \|_{ C^{ \b-\a}(A)} \leq C   \|\psi \|_{ C^{\b}(S) } .
\end{equation}
From this \eqref{eq:G_A} follows, since $\partial_qQ(0,r)$ is equal to a constant times $r^{-1-\a}$.

To show \eqref{eq:L-alpha-1-1} and \eqref{eq:G1}, it suffices to fix $e \in S$ arbitrarily, and prove that 
\begin{equation}
  \label{eq:L-alpha-1-1-e}
\|L_{\alpha-1} \psi \|_{ C^{ \b-\a}(S_e)} \leq C   \|\psi \|_{ C^{\b}(S) } \qquad \text{with 
$S_e:= \{\th \in S \::\: \th \cdot e \ge 0\}$} 
\end{equation}
and that 
\begin{equation}
  \label{eq:G1-e}
\|\widetilde G_\psi \|_{ C^{ \b-\a}(A_e)} \leq C   \|\psi \|_{ C^{\b}(S) }\qquad \text{with 
$A_e:= \{x \in A\::\: x \cdot e \ge 0\}$.} 
\end{equation}
To show these estimates, we consider  again a Lipschitz continuous map of rotations $S \mapsto SO(N)$, 
$\th \mapsto R_\theta$ with the property that \eqref{eq:def-Se} \eqref{eq:Rota-isom} holds, so that by a 
change of variable we have 
\begin{equation}
  \label{eq:L-alpha-1-2-e}
L_{\alpha-1} \psi(\th)=\int_{{S}}\frac{\psi(\th)-\psi(R_\th \s)}{      |e-\s|^{N+\a-1} } \,      dV(\s) 
\qquad \text{for $\th \in S_e$.}
\end{equation}
Since 
\begin{align*}
\Bigl | \{\psi(\th_1)-\psi(R_{\th_1} \s)\}&- \bigl\{\psi(\th_2)-\psi(R_{\th_2} \s)\bigr\}\Bigr|\\
&\le C \|\psi\|_{C^{\beta}(S)}
\min \{|\th_1-\th_2|^\beta, |\th_1-R_{\th_1}\s|^\beta+|\th_2-R_{\th_2}\s|^\beta\}\\
&\le C \|\psi\|_{C^{\beta}(S)} 
\min \{|\th_1-\th_2|^\b, |e-\s|^\b\} \qquad \text{for $\th_1,\th_2 \in S_e$ and $\s \in S$,}
\end{align*}
we may deduce by a similar integration as in the proof of Lemma~\ref{lem:est-cand-deriv} that 
(\ref{eq:L-alpha-1-1-e}) holds. 

To prove (\ref{eq:G1-e}), we write, again by a change of variable,
\begin{equation}
  \label{eq:G1-2}
\widetilde G_\psi(x)= \int_{{S}}\frac{\psi(\th)-\psi(R_\th \s)}{      |e-\s|^{N+\a-2} } q(r,\s)   dV(\s) 
\qquad \text{for $x = r \th \in A_e$,}
\end{equation}
with 
$$
q \in C^\infty([{1}/{2},2]\times S), \qquad q(r,\s) :=  \int_0^1 \int_0^1\t \de_a^2Q(\l\t |e-\s|,r)\,d\t  d\l.
$$
Since the function $\s \mapsto \frac{1}{|e-\s|^{N+\a-2}}$ is integrable over $S$, it is then easy to deduce that 
$$
\|\widetilde G_\psi \|_{ C^{ \b-\a}(A_e)} \leq C \|\widetilde G_\psi \|_{ C^{ \b}(A_e)}\le C \|\psi \|_{ C^{\b}(S) }.
$$
Hence (\ref{eq:G1-e}) holds as well.

In view of \eqref{eq:Dsu=g--G}, (\ref{eq:r-1-g}) and (\ref{eq:G1}), we can thus apply 
local Hölder regularity estimates for the fractional Laplacian. A version suited for our situation is that
of Theorem~1.3 of \cite{DSV}, which we apply rescaled and with $k=0$ and $\gamma=\beta-\a$.
We conclude that  $u\in C_{\loc}^{1,\beta}(A)$, and that 
\begin{align*}
  \|\psi \|_{ C^{1,\beta}(S) } \leq C  \left( \|g\|_{ C^{\b-\a}(S) } + \Vert\psi\Vert_{C^{\b}(S)} 
  + \Vert u\Vert_{L^{\infty}(\R^N)}   \right) \leq C  \left( \|g\|_{ C^{\b-\a}(S) } + \Vert\psi\Vert_{C^{\b}(S)} 
  \right).
\end{align*}
We finally combine this with Lemma~\ref{standard-interpolation}, applied with $\eps= \frac{1}{2C}$. We also
apply the isomorphism statement in Lemma~\ref{sec:spectr-prop-line-sobolev}(i) to get that
$ \|\psi \|_{L^{2}(S) } \leq  \Vert\psi\Vert_{H^{1+\a}(S)}\leq C \|g\|_{ L^{2}(S) }$.
We conclude the estimate 
 $$
 \|\psi \|_{ C^{1,\beta}(S) } \leq C \left(  \|g \|_{ C^{\beta-\alpha}(S) } +\|\psi\|_{L^2(S)}    \right) 
 \le C \left(      \|g \|_{ C^{\beta-\alpha}(S) }+ \|g\|_{L^2(S)}\right) \le
C \|g\|_{C^{\beta-\alpha}(S)}.
 $$
Thus (\ref{eq:c-infty-est}) holds.
\end{proof}

By a density argument, we may now deduce the following proposition from Lemma~\ref{sec:spectr-prop-line-sobolev}(iii) 
and Lemma~\ref{sec:line-nmc-oper-1}.

\begin{proposition}
\label{sec:spectr-prop-line-hoelder} 
Let $\alpha \in (0,1)$, $\beta \in (\alpha,1)$, and let the operator $L_\alpha$ be given by \eqref{eq:def-L-a-sec-lin}. Then the operator 
$$
L_\alpha + \id: C^{1,\beta}(S) \to C^{\beta-\alpha}(S)
$$ 
is an isomorphism. 
\end{proposition}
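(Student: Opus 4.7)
The proof plan is to combine the continuity statement of Lemma~\ref{lem:lin-NMC-op}, the a priori estimate of Lemma~\ref{sec:line-nmc-oper-1} on smooth functions, and the spectral Sobolev framework of Lemma~\ref{sec:spectr-prop-line-sobolev}. Let $T := L_\alpha + \id$; its continuity as a map $C^{1,\beta}(S) \to C^{\beta-\alpha}(S)$ is already given by the last statement of Lemma~\ref{lem:lin-NMC-op}. It thus remains to prove bijectivity, after which the open mapping theorem yields continuity of $T^{-1}$.

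For injectivity, I would argue as follows. Suppose $\psi \in C^{1,\beta}(S)$ satisfies $T\psi = 0$. By Lemma~\ref{sec:spectr-prop-line-sobolev}(ii), $\psi$ lies in $H^{1+\alpha}(S)$ and $L_\alpha\psi = \widetilde L_\alpha\psi$ in $L^2(S)$, so $(\widetilde L_\alpha + \id)\psi = 0$ in $L^2(S)$. The isomorphism property in Lemma~\ref{sec:spectr-prop-line-sobolev}(i) (applied with $\rho = 0$) then forces $\psi = 0$.

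For surjectivity, given $g \in C^{\beta-\alpha}(S)$, I would first construct a sequence $\{g_n\} \subset C^\infty(S)$ with $g_n \to g$ uniformly on $S$ and
$$\sup_n \|g_n\|_{C^{\beta-\alpha}(S)} \le C\,\|g\|_{C^{\beta-\alpha}(S)}.$$
A convenient choice is $g_n := e^{-\Delta_S/n}\, g$, obtained by convolution with the heat semigroup on $S$: this produces $C^\infty$ functions converging uniformly to $g$, and whose H\"older seminorm is controlled by that of $g$ since convolution against a probability measure is bounded on $C^{\beta-\alpha}$. Lemma~\ref{sec:spectr-prop-line-sobolev}(iii) then yields a unique $\psi_n \in C^\infty(S)$ with $T\psi_n = g_n$, and Lemma~\ref{sec:line-nmc-oper-1} supplies the uniform bound
$$\|\psi_n\|_{C^{1,\beta}(S)} \le C\,\|g_n\|_{C^{\beta-\alpha}(S)} \le C\,\|g\|_{C^{\beta-\alpha}(S)}.$$
By Arzel\`a--Ascoli, a subsequence of $\{\psi_n\}$ converges in $C^{1,\beta'}(S)$ for every $\beta' \in (\alpha,\beta)$ to some $\psi$, with $\psi \in C^{1,\beta}(S)$ by lower semicontinuity of the H\"older seminorm. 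Applying the continuity of $T:C^{1,\beta'}(S)\to C^{\beta'-\alpha}(S)$ (from Lemma~\ref{lem:lin-NMC-op} with $\beta$ replaced by $\beta'$) and matching against the uniform convergence $g_n \to g$, I conclude $T\psi = g$.

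The main technical point is producing smooth approximants of $g$ that do not blow up in the $C^{\beta-\alpha}$ norm; this is essential because $C^\infty(S)$ is not dense in $C^{\beta-\alpha}(S)$ for the H\"older topology, so a naive density argument is unavailable. Once such approximants are in hand, the remainder is a standard compactness-and-passage-to-the-limit argument driven by the a priori estimate of Lemma~\ref{sec:line-nmc-oper-1} and the spectral isomorphism of Lemma~\ref{sec:spectr-prop-line-sobolev}.
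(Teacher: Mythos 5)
Your proof is correct, and its overall architecture (injectivity via Lemma~\ref{sec:spectr-prop-line-sobolev}(i)--(ii), surjectivity via smooth approximation combined with the a priori bound of Lemma~\ref{sec:line-nmc-oper-1} and the smooth solvability of Lemma~\ref{sec:spectr-prop-line-sobolev}(iii), then the open mapping theorem) coincides with the paper's. Where you genuinely deviate is the limit passage in the surjectivity step: the paper takes $g_n \in C^\infty(S)$ with $g_n \to g$ \emph{in the $C^{\beta-\alpha}(S)$ norm}, applies the estimate of Lemma~\ref{sec:line-nmc-oper-1} to differences to get that $(\psi_n)$ is Cauchy in $C^{1,\beta}(S)$, and passes to the limit using the continuity of $L_\alpha+\id$ from Lemma~\ref{lem:lin-NMC-op}; you instead take approximants converging only uniformly but with uniformly bounded H\"older norms (heat semigroup regularization), use the a priori bound to get a uniform $C^{1,\beta}$ bound on $\psi_n$, and conclude by compact embedding, lower semicontinuity of the H\"older seminorm, and continuity of the operator at a slightly weaker exponent $\beta'\in(\alpha,\beta)$. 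Your concern motivating this detour is well taken: $C^\infty(S)$ is not dense in $C^{\beta-\alpha}(S)$ for the H\"older norm (its closure is the little H\"older space), so the paper's choice of approximants, as literally written, is available only for $g$ in that closed subspace, whereas your bounded-approximation-plus-compactness argument covers every $g \in C^{\beta-\alpha}(S)$. The trade-off is that the paper's Cauchy-sequence argument is shorter and needs no compactness or semicontinuity input, while yours is slightly longer but more robust; both rest on exactly the same two key lemmas, and your injectivity argument is identical to the paper's.
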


\begin{proof}
We first show that $\textrm{Ker}\,(L_\alpha + \id)=  \{0\}$. Let $\psi \in C^{1,\beta}(S)$ with  $L_\alpha \psi 
+ \psi = 0$ in $C^{\beta-\alpha}(S)$. By Lemma~\ref{sec:spectr-prop-line-sobolev}(ii) we then have 
$\widetilde L_\alpha \psi+\psi  =  L_\alpha \psi + \psi = 0$ in $L^2(S)$, and thus $\psi = 0$ by 
Lemma~\ref{sec:spectr-prop-line-sobolev}(i).

Next we show that $L_\alpha + \id$ is onto. For this, we   let $g \in C^{\beta-\alpha}(S)$ and let $g_n \in 
C^\infty(S)$ be a sequence such that $g_n \to g$ in $C^{\beta-\alpha}(S)$. By 
Lemma~\ref{sec:spectr-prop-line-sobolev}(iii), there exists $\psi_n \in C^\infty(S)$, $n \in \N$, with 
$L_\alpha \psi_n + \psi_n = g_n$. Moreover, by Lemma~\ref{sec:line-nmc-oper-1} we have
$$
\|\psi_n-\psi_m\|_{C^{1,\beta}(S)} \le C \|g_n-g_m\|_{C^{\beta-\alpha}(S)} \qquad \text{for $n, m \in \N$.}  
$$
Consequently, the sequence $(\psi_n)_n$ is a Cauchy sequence in $C^{1,\beta}(S)$, so that $\psi_n \to \psi$ 
in $C^{1,\beta}(S)$. 
Moreover, by continuity we have 
$$
L_\alpha \psi + \psi = \lim_{n \to \infty}(L_\alpha \psi_n + \psi_n)= \lim_{n \to \infty}g_n = g \qquad 
\text{in $C^{\beta-\alpha}(S)$.}
$$
It follows that the continuous linear map $L_\alpha + \id: C^{1,\beta}(S) \to C^{\beta-\alpha}(S)$ is bijective, 
and thus it is an isomorphism by the open mapping theorem. 
\end{proof}

\begin{proof}[Proof of Theorem~\ref{prop:Dh0-invert} (completed)] 
By Proposition~\ref{sec:spectr-prop-line-hoelder}, we have that 
\be \label{eq:L-1}
  L_\a + \id: X\to  Y \qquad \text{is an isomorphism.}  
\ee
Since the inclusion $\id: X \to Y$ is a compact operator, it follows that 
$$
\cL:= L_\a -\l_1 \id: X \to  Y \qquad \text{is a Fredholm operator of index zero.}  
$$
Moreover, by Lemma~\ref{sec:spectr-prop-line-sobolev}(ii) and \eqref{eq:L-a-sph-harm}, we have that  
$\textrm{Ker}\,\cL=X\cap \cE_1=  \{0\}$ since $X$ is made of even functions and $\cE_1$ contains only odd ones. 
Consequently, $\cL$ is an isomorphism, and thus  
$  Dh(0) = \a \cL : X\to Y$ is an isomorphism as well. 
\end{proof}

\bigskip\bigskip
\centerline{{\sc Acknowledgments}} 
\smallskip

The first author would like to thank Joan Sol\`a-Morales for many interesting discussions in the subject 
of this paper.

\end{document}